\documentclass{amsart}
\usepackage{amsfonts,color}
\usepackage{latexsym}
\usepackage{amssymb}
\usepackage{amsmath}

\usepackage{amsthm}
\usepackage{epsfig}
\usepackage{graphicx,colortbl}
\usepackage{url}
\usepackage[colorlinks=false, pdfborder={0 0 0}]{hyperref}
\usepackage{mathtools, color, changes}

\usepackage{mathptmx}

\usepackage{relsize}

\usepackage{titleps}

\subjclass[2020]{52C07, 52A20}
\keywords{Lattice point enumerator, log-concave functions, Zhang's projection inequality, Rogers-Shephard inequality, Berwald's inequality.}

\newcommand{\R}{\mathbb R}
\newcommand{\N}{\mathbb N}
\newcommand{\Z}{\mathbb Z}

\newcommand{\norma}[1]{\left\Vert#1\right\Vert}

\newtheorem{thm}{Theorem}[section]
\newtheorem{cor}{Corollary}[section]
\newtheorem{lemma}{Lemma}[section]

\theoremstyle{remark}
\newtheorem{rmk}{Remark}

\theoremstyle{definition}

\def\vol{\mathrm{vol}}
\numberwithin{equation}{section}

\begin{document}


\title{A discrete approach to the functional Zhang's projection inequality}

\author[D.\,Alonso]{David Alonso-Guti\'errez}
\address{\'Area de an\'alisis matem\'atico, Departamento de matem\'aticas, Facultad de Ciencias, Universidad de Zaragoza, Pedro Cerbuna 12, 50009 Zaragoza (Spain), IUMA}
\email[(David Alonso)]{alonsod@unizar.es}

\author[J. S\'anchez]{Julia S\'anchez-Loscertales}
\address{\'Area de an\'alisis matem\'atico, Departamento de matem\'aticas, Facultad de Ciencias, Universidad de Zaragoza, Pedro Cerbuna 12, 50009 Zaragoza (Spain), IUMA}
\email[(Julia S\'anchez)]{julia.sanchez@unizar.es}

\thanks{The authors are partially supported by Project PID2022-137294NB-I00  funded by   MICIU/AEI/10.13039/501100011033/ and by DGA Project E48\_23R}

\begin{abstract}
In this paper, we focus our attention on the following inequality, which holds for any integrable log-concave function with positive integral $f$, and gives an inclusion between the $n$-th Ball body of the covariogram function of $f$ and its polar projection body, providing a functional version of Zhang's projection inequality in the setting of log-concave functions:
$$
n\int_0^\infty r^{n-1}\int_{\R^n}\min\{f(z),f(z-re_n)\}dzdr\leq n!\frac{\left(\int_{\R^n}f(x)dx\right)^{n+1}}{\left(\int_{e_n^\perp}P_{e_n^\perp}f(y)dy\right)^n}.
$$
Here $(e_i)_{i=1}^n$ denotes the canonical basis in $\R^n$  and $P_{e_n^\perp} f$ denotes the projection of $f$ onto the hyperplane orthogonal to $e_n$ given, for $y\in e_n^\perp$, by $P_{e_n^\perp} f(y)=\sup_{\lambda\in\R}f(y+\lambda e_n)$. We will prove a discrete version of this inequality in which we will consider discrete measures involving the lattice point enumerator measure, rather than the Lebesgue measure. In order to prove it, we will obtain a discrete version of the functional version of Berwald's inequality. We will also provide a discrete version of the functional Rogers-Shephard inequality. All the discrete versions of the inequalities considered will imply their continuous counterparts.
\end{abstract}

\date{\today}
\maketitle

\section{Introduction and notation}

Given a convex body $K\subseteq\R^n$, i.e., a compact convex set with non-empty interior, it is well known that, as a consequence of Cauchy's projection formula \cite[Eq. (A.45)]{G}, the quantity given by
$$
\Vert x\Vert_{\Pi^*(K)}:=\Vert x\Vert_2|P_{x^\perp}(K)|_{n-1},\quad x\in\R^n,
$$
defines a norm in $\R^n$. Here, and along the paper, $\Vert\cdot\Vert_2$ denotes the Euclidean norm, whose closed unit ball in $\R^n$ will be denoted by $B_2^n$,  $P_{x^\perp}(K)$ denotes the orthogonal projection of $K$ onto the linear hyperplane orthogonal to $x$, and $|A|_k$ (or simply $|\cdot|$ when the dimension is clear from the context) denotes the $k$-dimensional volume (Lebesgue measure) of a set $A$ contained in a $k$-dimensional affine subspace. The closed unit ball of this norm is a convex body, called the polar projection body of $K$, which will be denoted by $\Pi^*(K)$.

As a direct consequence of \cite[Thm. 4.1.5]{G}, for any convex body $K\subseteq\R^n$, the quantity $|K|^{n-1}|\Pi^*(K)|$ is an affine invariant, i.e., for any non-degenerate affine map $T$ we have that $$|T(K)|^{n-1}|\Pi^*(T(K))|=|K|^{n-1}|\Pi^*(K)|.$$ The well-known Petty projection inequality, which is stronger than the isoperimetric inequality, was proved in \cite{Pe} and it states that, among all $n$-dimensional convex bodies, the affinely invariant quantity $|K|^{n-1}|\Pi^*K|$ is maximized if and only if $K$ is an ellipsoid. Therefore, for every convex body $K\subseteq\R^n$,
\begin{equation}\label{eq:Petty}
|K|^{n-1}|\Pi^*(K)|\leq\left(\frac{|B_2^n|}{\left|B_2^{n-1}\right|}\right)^n.
\end{equation}

In \cite{Zh}, Zhang proved a reverse Petty projection inequality, showing that among all $n$-dimensional convex bodies, the same affinely invariant quantity is minimized if and only if $K$ is a simplex (i.e., the convex hull of $n+1$ affinely independent points). Thus, for any convex body $K\subseteq\R^n$

\begin{equation}\label{eq:Zhang}
\frac{{{2n}\choose{n}}}{n^n}\leq|K|^{n-1}|\Pi^*(K)|.
\end{equation}

In the last decades, many geometric parameters and inequalities have been extended from the setting of convex bodies to the setting of log-concave functions, i.e., functions $f:\R^n\to[0,\infty)$ of the form $f(x)=e^{-u(x)}$, with $u:\R^n\to (-\infty,\infty]$ a convex function (see, for instance \cite{AKV} and \cite{Ba0}, where the Blaschke-Santal\'o inequality is extended to the setting of log-concave integrable functions, or \cite{KM}, where reverse forms of the Brunn-Minkowski and Blaschke-Santal\'o inequalities for log-concave functions were proved). In particular, denoting $\mathcal{F}(\R^n)$ the set of integrable (with respect to the Lebesgue measure) log-concave functions defined on $\R^n$ with positive integral, the polar projection body of $f\in\mathcal{F}(\R^n)$, denoted by $\Pi^*(f)$, was defined in \cite{AGJV} as the closed unit ball of the norm given by
$$
\Vert x\Vert_{\Pi^*(f)}:=2\Vert x\Vert_2\int_{x^\perp}P_{x^\perp}f(y)dy,
$$
where $P_{x^\perp}f:x^\perp\to[0,\infty)$ denotes the projection of $f$ onto $x^\perp$, which was defined in \cite{KM} as $\displaystyle{P_{x^\perp}f(y):=\sup_{\lambda\in\R}f(y+\lambda x)}$. Let us point out that the factor 2 in the definition of $\Vert\cdot\Vert_{\Pi^*(f)}$ was introduced so that, whenever $f$ belongs to the Sobolev space $W^{1,1}(\R^n)=\left\{f\in L^1(\R^n)\,:\,\frac{\partial f}{\partial x_i}\in L^1(\R^n),\,\forall 1\leq i\leq n\right\}$, the norm $\Vert f\Vert_{\Pi^*(f)}$ coincides with the norm given by
\begin{equation}\label{eq:NormP^*(f)2}
\Vert x\Vert_{\Pi^*(f)}=\int_{\R^n}|\langle\nabla f(y),x\rangle|dy,
\end{equation}
which was considered in \cite{Zh2} in order to prove the functional version of \eqref{eq:Petty}, which states that for every $f\in W^{1,1}(\R^n)$, denoting by $\Pi^*(f)$ the unit ball of the norm given by \eqref{eq:NormP^*(f)2}, we have
$$
\Vert f\Vert_{\frac{n}{n-1}}|\Pi^*(f)|^{1/n}\leq\frac{|B_2^n|}{2\left|B_2^{n-1}\right|}.
$$
Inequality \eqref{eq:Petty} can be recovered from the latter inequality by, given a convex body $K\subseteq\R^n$, approximating $\chi_K$, the characteristic function of $K$, by smooth functions in $W^{1,1}(\R^n)$.

In \cite{ABG1}, the following extension of Zhang's inequality to the setting of log-concave functions was proved: For any $f\in\mathcal{F}(\R^n)$ we have
\begin{equation}\label{eq:FunctionalZhang}
\int_{\R^n}\int_{\R^n}\min\{f(x),f(y)\}dydx\leq 2^n n!\Vert f\Vert_1^{n+1}|\Pi^*(f)|.
\end{equation}
Given a convex body $K\subseteq\R^n$ containing the origin, applying inequality \eqref{eq:FunctionalZhang} to the log-concave function given by $f(x)=e^{-\Vert x\Vert_K}$, where $\Vert\cdot\Vert_K$ denotes the Minkowski functional of $K$, defined as $\Vert x\Vert_K:=\inf\{\lambda\geq0\,:\,x\in\lambda K\}$ the geometric form of Zhang's inequality, \eqref{eq:Zhang}, is recovered.

Let us point out that in \cite{GZ}, a proof of inequality \eqref{eq:Zhang} was obtained as a consequence of an inclusion relation between the $n$-th Ball body of the covariogram function of $K$ and the convex body $\Pi^*(K)$ and that the proof of \eqref{eq:FunctionalZhang} provided in \cite{ABG1}, also follows from a similar inclusion relation. Let us introduce some notation in order to state these inclusion relations.

Given a convex body $K\subseteq\R^n$, its covariogram function is the function $g_K:\R^n\to[0,\infty)$, supported on $K-K$, the Minkowski sum of $K$ and its opposite $-K$, defined as
$$
g_K(x):=|K\cap(x+K)|.
$$
Given $f\in\mathcal{F}(\R^n)$, its covariogram function is the function $g_f:\R^n\to[0,\infty)$ defined as
$$
g_f(x):=\int_{\R^n}\min\left\{\frac{f(z)}{\Vert f\Vert_\infty},\frac{f(z-x)}{\Vert f\Vert_\infty}\right\}dy.
$$
Notice that if $K\subseteq\R^n$ is a convex body, then $g_{\chi_K}=g_K$.

Given $g:\R^n\to[0,\infty)$ a measurable function such that $g(0)>0$, the following family of sets associated to $g$ were introduced by Ball in \cite{Ba}. For any $p>0$ the $p$-th Ball body associated to $g$ is the convex body defined as
$$
K_p(g):=\left\{x\in\R^n\,:\,\int_0^\infty pr^{p-1}g(rx)dr\geq g(0)\right\}.
$$
They are star sets with $0$ as a center whose radial function (which, for any star set $K$ with $0$ as a center, is defined for every $\theta\in S^{n-1}$, as $\rho_K(\theta)=\sup\{\lambda\geq 0\,:\,\lambda\theta\in K\}$) is, for every $\theta\in S^{n-1}$,
$$
\rho_{K_p(g)}(\theta)=\left(\frac{p}{g(0)}\int_0^\infty r^{p-1}g(r\theta)dr\right)^{1/p}.
$$
Moreover, if $g\in\mathcal{F}(\R^n)$ then $K_p(g)$ is a convex body for every $p>0$. Taking into account that for any convex body $K\subseteq\R^n$ and any $f\in\mathcal{F}(\R^n)$ we have that $g_K, g_f\in\mathcal{F}(\R^n)$ and $g_K(0), g_f(0)>0$, the families of convex bodies $K_p(g_K)$ and $K_p(g_f)$ are defined for every convex body $K\subseteq\R^n$ and every $f\in\mathcal{F}(\R^n)$.

In \cite{GZ}, Zhang's inequality was obtained  as a consequence of the following inclusion relation: For every convex body $K\subseteq\R^n$,
\begin{equation}\label{eq:InclusionZhang}
{\binom{2n}{n}}^\frac{1}{n}K_n(g_K)\subseteq n|K|\Pi^*(K).
\end{equation}
Its functional version, given by equation \eqref{eq:FunctionalZhang}, was proved in \cite{ABG1} (see also \cite{ABG2}) by showing that for every $f\in\mathcal{F}(\R^n)$
\begin{equation}\label{eq:InclusionFunctionalZhang}
K_n(g_f)\subseteq 2(n!)^\frac{1}{n}\Vert f\Vert_1\Pi^*(f).
\end{equation}

In \cite{IYNZ}, the authors obtained a Brunn-Minkowski type inequality for the discrete measure given by the lattice point enumerator $G_n(A)=\sharp(A\cap\Z^n)$, where $\sharp (\cdot)$ denotes the cardinality of a set (see Theorem \ref{thm: BM_lattice_point_no_G(K)G(L)>0} below, where $C_n$ denotes the open cube $(-1,1)^n$. More generally, if $A$ is contained an affine subspace $x_0+\text{span } \{e_1, \ldots, e_k \}$ for some $x_0 \in \text{span } \{e_1, \ldots, e_k \}^{\perp}$ with $1\leq k\leq n$, we will denote $G_k(A)=G_n(A-x_0)$. We will also denote $C_k=(-1,1)^k\times\{0\}^{n-k}$ and $B_\infty^n=[-1,1]^n$). Since then, several discrete versions of geometric and functional inequalities have been obtained following classical ideas in convex geometry. Typically, an extra open unit cube appears in some terms in these new discrete inequalities, due to the fact that adding an open unit cube in the large term in the discrete version of Brunn-Minkowski inequality is necessary (see for instance \cite{IYNZ}, where besides the discrete version of Brunn-Minkowski inequality, a discrete version of Borell-Brascamp-Lieb inequality was proved, \cite{HLN}, where a discrete version of the $L_p$-Brunn Minkowski inequality was obtained, or \cite{AG}, where a discrete version of Borell's inequality was proved).  Nevertheless, such discrete versions allow to recover their geometric counterparts.

In \cite{ALM}, it was pointed out that having inclusion \eqref{eq:InclusionZhang} for every convex body $K\subseteq\R^n$ is equivalent to having the following inequality between the radial functions of $K_n(g_K)$ and of $\Pi^*(K)$, for every convex body $K\subseteq\R^n$, just in the direction $e_n$, being $(e_i)_{i=1}^n$ the canonical basis in $\R^n$:
\begin{equation}\label{eq:InclusionZhangRadialFunctions1}
{\binom{2n}{n}}^\frac{1}{n}\rho_{K_n(g_K)}(e_n)\leq n|K|\rho_{\Pi^*(K)}(e_n).
\end{equation}
Therefore, having the inclusion given by \eqref{eq:InclusionZhang} for every convex body $K\subseteq\R^n$ is equivalent to having that, for every convex body $K\subseteq\R^n$,
\begin{equation}\label{eq:InclusionZhangRadialFunctions2}
\frac{{\binom{2n}{n}}}{n^n}n\int_0^\infty r^{n-1}|K\cap(re_n+K)|dr\leq \frac{|K|^{n+1}}{|P_{e_n^\perp}(K)|^n}.
\end{equation}
After noticing this fact, the following discrete version of inequality \eqref{eq:InclusionZhangRadialFunctions2} was proved for convex bodies satisfying that $\displaystyle{\max_{y\in e_n^\perp}|K\cap (y+\langle e_n\rangle)|_1=|K\cap \langle e_n\rangle|_1}$:
\begin{equation}\label{eq:InequalityDiscreteZhang}
\frac{{\binom{2n}{n}}}{n^n}n\int_0^\infty r^{n-1}\mu(K\cap(re_n+K))dr\leq \frac{(\mu(S_{e_n}(K)+C_{n-1})^{n+1}}{G_{n-1}(P_{e_n^\perp}(K)))^n},
\end{equation}
where $\langle e_n\rangle$ denotes the 1-dimensional linear space spanned by $e_n$, $d\mu$ is the measure defined by $d\mu(x)=d\mu_n(x):=dG_{n-1}\otimes dm_1$, being $dm_k$ the $k$-dimensional Lebesgue measure, and $S_{e_n}(K)$ denotes the Steiner symmetrization of $K$ with respect to the hyperplane $e_n^\perp$.

\begin{rmk}
Let us point out that, besides ensuring that $G_{n-1}(P_{e_n^\perp}(K))>0$, the condition $\displaystyle{\max_{y\in e_n^\perp}|K\cap (y+\langle e_n\rangle)|_1=|K\cap \langle e_n\rangle|_1}$  on the convex body $K$ for \eqref{eq:InequalityDiscreteZhang} to hold appears because it was proved by applying the discrete version of Berwald's inequality (see Theorem \ref{thm:BerwaldDiscreteNew} below, obtained in \cite[Thm. 4.5]{ALY} with the extra assumption that the function attained its maximum at the origin) to the concave function $f: P_{e_n^\perp}K\to[0,\infty)$ given by $f(y)=|K\cap (y+\langle e_n\rangle)|_1$. However, we will show later in this paper that Theorem \ref{thm:BerwaldDiscreteNew} can be obtained without that extra assumption. Therefore, inequality \eqref{eq:InequalityDiscreteZhang} holds for every convex body $K\subseteq\R^n$ such that $0\in P_{e_n^\perp}(K)$.
\end{rmk}

Notice that, in the same way, having the inclusion relation \eqref{eq:InclusionFunctionalZhang} for every $f\in\mathcal{F}(\R^n)$, is equivalent to having the corresponding inequality between radial functions in the direction $e_n$, for every $f\in\mathcal{F}(\R^n)$, which reads as
\begin{equation}\label{eq:InclusionFunctionalZhangRadialFunctions}
n\int_0^\infty r^{n-1}\int_{\R^n}\min\{f(z),f(z-re_n)\}dzdr\leq n!\frac{\left(\int_{\R^n}f(x)dx\right)^{n+1}}{\left(\int_{e_n^\perp}P_{e_n^\perp}f(y)dy\right)^n}.
\end{equation}

The main purpose of this paper is to prove the following theorem, which gives a discrete version of \eqref{eq:InclusionFunctionalZhangRadialFunctions} in the same spirit as inequality \eqref{eq:InequalityDiscreteZhang} gives a discrete version of inequality \eqref{eq:InclusionZhangRadialFunctions2}. Given $f\in\mathcal{F}(\R^n)$, we will denote by $S_{e_n}(f)$ the Steiner symmetrization of $f$ with respect to the hyperplane $e_n^\perp$ (see the precise definition in Section \ref{sec:Steiner}) and, for any  $f,g\in\mathcal{F}(\R^n)$, we will denote $f\star g$ their Asplund product given by
$$
f\star g(x)=\sup_{z\in\R^n}f(z)g(x-z).
$$

With this notation, we state the theorem as follows:

\begin{thm}\label{thm:DiscreteFunctionalZhang}
Let $f\in\mathcal{F}(\R^n)$ such that $\Vert f\Vert_\infty=f(0)$. Then,
$$
n\int_0^\infty r^{n-1}\int_{\R^n}\min\{f(z),f(z-re_n)\}d\mu(z)dr\leq n!\frac{\left(\int_{\R^n}S_{e_n}(f)\star\chi_{C_{n-1}}(x)d\mu(x)\right)^{n+1}}{\left(\int_{e_n^\perp}P_{e_n^\perp}f(y)dG_{n-1}(y)\right)^n}.
$$
\end{thm}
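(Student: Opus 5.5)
The plan is to follow the continuous proof of \eqref{eq:InclusionFunctionalZhangRadialFunctions} through the function $y\mapsto$ (fiber integrals), and to replace the continuous Berwald inequality by the discrete functional Berwald inequality (Theorem \ref{thm:BerwaldDiscreteNew}, used without the assumption that the maximum sits at the origin). Since $\mu=G_{n-1}\otimes m_1$, every integral against $d\mu$ is a sum over $y\in\Z^{n-1}$ of one-dimensional integrals along the lines $y+\langle e_n\rangle$.

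\emph{Step 1: reduce to one-dimensional fibers.} For $y\in e_n^\perp$ write $f_y(t)=f(y+te_n)$. This is a log-concave function of one variable with $\sup_t f_y(t)=P_{e_n^\perp}f(y)$. The left-hand side then equals
$$\sum_{y\in\Z^{n-1}} n\int_0^\infty r^{n-1}\int_\R \min\{f_y(t),f_y(t-r)\}\,dt\,dr.$$
I would rewrite each fiber term using the layer-cake formula $\min\{a,b\}=\int_0^\infty\chi_{\{a>s\}}\chi_{\{b>s\}}\,ds$. Each level set $\{f_y>s\}$ is an interval, of length $\ell_y(s)$. Integrating $nr^{n-1}$ against the length $(\ell_y(s)-r)_+$ of the overlap of that interval with its translate by $r$ gives
$$n\int_0^\infty r^{n-1}(\ell_y(s)-r)_+\,dr=\frac{\ell_y(s)^{n+1}}{n+1}.$$
So the left-hand side is $\frac{1}{n+1}\sum_y\int_0^{\infty}\ell_y(s)^{n+1}\,ds$.

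\emph{Step 2: pass to a log-concave function of $(y,s)$.} Set $F(y,s)=\ell_y(s)$. The set $\{(x,s): f(x)>s\}$ is not convex, so I would change variables $s=e^{-u}$ (or $s=\Vert f\Vert_\infty e^{-u}$), which gives a set on which $f$ is log-concave. On $\{(y,u): u>-\log\Vert f\Vert_\infty\}$ the function $(y,u)\mapsto\ell_y(e^{-u})$ is concave, by Brunn--Minkowski on the sets $\{x:-\log f(x)<u\}$ in dimension one along fibers. The right-hand quantities translate as follows:
\begin{itemize}
\item $\int S_{e_n}(f)\star\chi_{C_{n-1}}\,d\mu=\sum_y\int_0^\infty \max_{z\in y+C_{n-1}}\ell_z(s)\,ds$, because Steiner symmetrization centres the level intervals and the Asplund product with $\chi_{C_{n-1}}$ takes the sup over $y+C_{n-1}$;
\item $\sum_y P_{e_n^\perp}f(y)=\sum_y\int_0^\infty\chi_{\{\ell_y(s)>0\}}\,ds$.
\end{itemize}
The hypothesis $\Vert f\Vert_\infty=f(0)$ guarantees that the fiber at $y=0$ reaches the top level, so the denominator is positive and the set on which Berwald is applied contains the relevant lattice points.

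\emph{Step 3: apply discrete Berwald.} I would apply Theorem \ref{thm:BerwaldDiscreteNew} in $n$ dimensions ($n-1$ lattice directions plus the continuous level variable) to the concave function $F$ on the convex set $\{\ell>0\}$. It compares the $L^{n+1}$ and $L^1$ means of $F$, with the open-cube enlargement appearing in the $L^1$ term, and with the measure of the support $\sum_y P_{e_n^\perp}f(y)$ as normalizing factor. Weighting with the measure $e^{-u}du$ produces the Gamma-type constants. This would yield the chain: left-hand side $\leq \frac{1}{n+1}\cdot c_n\cdot(\text{numerator})^{n+1}/(\text{denominator})^n$. I would then check that $c_n/(n+1)=n!$, using the constant $\Gamma(n+2)/\Gamma(2)^{n+1}$ in the functional Berwald inequality for log-concave integrands, which is where the continuous inequality gets $n!$.

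\emph{Main obstacle.} The delicate part is Step 3. The discrete Berwald theorem must be applied in a mixed lattice--continuous setting, with a weight in the $s$-variable and with fiber lengths that are concave only after the exponential reparametrization. It must also be checked that the cube enlargement lands exactly on $S_{e_n}(f)\star\chi_{C_{n-1}}$. I would first try the functional form of Theorem \ref{thm:BerwaldDiscreteNew} directly, treating $h(y)=\int_\R f(y+te_n)\,dt$-type concave quantities level by level. If that fails, I would use Brunn's concavity of $(y,u)\mapsto \ell_y(e^{-u})$ to build a convex body in $\R^{n}\times\R$ and apply the geometric discrete Berwald inequality to it.
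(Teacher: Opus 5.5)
Your Steps 1 and 2 match the paper's reduction. The layer-cake computation is Lemma~\ref{lem:IdentityDiscrete} after the substitution $s=\Vert f\Vert_\infty e^{-t}$. Your fiber-length function is twice the paper's concave $h$ on $L(P_{e_n^\perp}f)$. Your description of the fibers of $S_{e_n}(f)\star\chi_{C_{n-1}}$ as centred intervals of length $\sup_{z\in y+C_{n-1}}\ell_z(s)$ is what the paper establishes in \eqref{eq:FromhToSymmetrization}.

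The gap is Step 3, which is the actual content of the proof. Theorem~\ref{thm:BerwaldDiscreteNew} is the geometric discrete Berwald inequality: counting measure in \emph{all} coordinates of a convex body, with binomial constants $\binom{n+q}{n}$. It does not apply to a function on $L(P_{e_n^\perp}f)\subseteq e_n^\perp\times[0,\infty)$ integrated against $d\nu_n=dG_{n-1}\otimes e^{-t}dt$. It also cannot produce the factor $\Gamma(n+2)=(n+1)!$ that yields $n!$. Your fallback (build a convex body and apply the geometric version) has the same defect, because the level variable is a continuous, exponentially weighted direction, not a lattice one.

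What you need is a new inequality, the paper's Theorem~\ref{thm:DiscreteFunctionalBerwald}, and it must be proved. Write $I_h(s)=\nu_n(C_s(h))$. One shows $I_{h\star\chi_{C_{n-1}}}((1-\lambda)s_0+\lambda s_1)\geq I_h(s_0)^{1-\lambda}I_h(s_1)^{\lambda}$ in two moves:
first apply the discrete Brunn--Minkowski inequality to the $t$-slices of the superlevel sets, so the open cube enters only in the lattice directions;
then apply Pr\'ekopa--Leindler in $t$ for the weight $e^{-t}$.
Next one squeezes a log-concave $g$ between $I_h$ and $I_{h\star\chi_{C_{n-1}}}$ with $g(0)=I_h(0)=\nu_n(L(P_{e_n^\perp}f))$. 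Finally one applies the one-dimensional moment inequality \eqref{eq:InequalityInclusionBallsBodies} with $p=1$, $q=n+1$.

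Second, your identity $\sum_y P_{e_n^\perp}f(y)=\sum_y\int_0^\infty\chi_{\{\ell_y(s)>0\}}\,ds$ is false, because a lattice fiber can meet the support in a single point. For example, take $n=2$ and $f=\chi_K$ with $K$ the triangle with vertices $(0,0)$, $(2,1)$, $(2,-1)$; then $\Vert f\Vert_\infty=f(0)$, the left side is $3$ and the right side is $2$. The normalizing factor enters with exponent $-n$, so applying Berwald on the convex set $\{\ell>0\}$ would only give an inequality weaker than the theorem.

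You must instead apply it on all of $L(P_{e_n^\perp}f)$, letting the concave function vanish on degenerate fibers. Then the normalizing factor is $\nu_n(L(P_{e_n^\perp}f))=\int_{e_n^\perp}P_{e_n^\perp}f\,dG_{n-1}/\Vert f\Vert_\infty$, which is exactly the value $g(0)=I_h(0)$ above. The hypothesis $\Vert f\Vert_\infty=f(0)$ is what gives $P_{e_n^\perp}f(0)=\Vert P_{e_n^\perp}f\Vert_\infty$, as Theorem~\ref{thm:DiscreteFunctionalBerwald} requires.
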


\begin{rmk}
We will show that Theorem \ref{thm:DiscreteFunctionalZhang} implies inequality \eqref{eq:InclusionFunctionalZhangRadialFunctions} and, therefore the inclusion given by \eqref{eq:InclusionFunctionalZhang} and the functional version of Zhang's inequality given by \eqref{eq:FunctionalZhang}. However, since the geometric versions of Zhang's inequality \eqref{eq:Zhang} and the inclusion \eqref{eq:InclusionZhang} are recovered from the functional versions (inequality \eqref{eq:FunctionalZhang} and inclusion \eqref{eq:InclusionFunctionalZhang}) by considering, given a convex body $K\subseteq\R^n$ with $0\in K$, functions of the form $f(x)=e^{-\Vert x\Vert_K}$ rather than $\chi_K$, it is not the case that we recover inequality \eqref{eq:InequalityDiscreteZhang} from Theorem \ref{thm:DiscreteFunctionalZhang}.
\end{rmk}

Another classical inequality in convex geometry, which is intimately related to the properties of the covariogram function, is Rogers-Shephard inequality, which was originally proved in \cite[Thm. 1]{RS1} and states that for any convex body $K\subseteq\R^n$, we have
\begin{equation}\label{eq:RogersShephard}
|K-K| \leq \binom{2n}{n} |K|,
\end{equation}
with equality if and only if $K$ is a simplex.

Rogers-Shephard inequality was extended to the setting of log-concave functions in \cite[Thm. 2.2]{AGJV1}, where it was proved that for any $f\in\mathcal{F}(\R^n)$, calling $\tilde{f}(x):=f(-x)$, we have
\begin{equation}\label{eq:FunctionalRogersSephard}
\int_{\R^n}f\star\tilde{f}(x)dx\leq\binom{2n}{n}\Vert f\Vert_\infty\int_{\R^n}f(x)dx.
\end{equation}

In \cite[Thm. 1.1]{ALY}, the following discrete analogue of Rogers-Shephard inequality \eqref{eq:RogersShephard} was obtained for any non-empty convex bounded set $K$:
\begin{equation}\label{eq:Rogers-Shephard discrete}
G_n(K-K) \leq \binom{2n}{n} G_n \left( K+ \frac{3}{4}C_n \right).
\end{equation}

Following our purpose of getting discrete versions of functional inequalities, we will prove the following discrete version of the functional version of Rogers-Shephard inequality given by \eqref{eq:FunctionalRogersSephard}:
\begin{thm}\label{thm:DiscreteFunctionalRogersShephard}
Let $f\in\mathcal{F}(\R^n)$ and let $\tilde{f}(x)=f(-x)$. Then
$$
\int_{\R^n} f \star \tilde{f}(x) dG_n(x) \leq \binom{2n}{n} \norma{f}_{\infty} \int_{\R^n} f\star\chi_{\frac{3}{4}C_n}(x) dG_n(x).
$$
\end{thm}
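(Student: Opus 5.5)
Reduce the statement to the discrete geometric Rogers--Shephard inequality \eqref{eq:Rogers-Shephard discrete} through a level-set (layer-cake) decomposition. By homogeneity assume $\Vert f\Vert_\infty=1$. For $t\in(0,1)$ write $K_t=\{x\in\R^n: f(x)\geq t\}$. Log-concavity and integrability make each $K_t$ a non-empty bounded convex set, and it is non-empty for $t<1$. The layer-cake formula with respect to the counting measure $G_n$ gives
$$
\int_{\R^n} f\star\chi_{\frac34 C_n}(x)\,dG_n(x)=\int_0^1 G_n\Big(\{f\star\chi_{\frac34C_n}\geq t\}\Big)dt,
$$
and $\{f\star\chi_{\frac34 C_n}\geq t\}\supseteq K_t+\frac34 C_n$. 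Indeed, if $x=z+w$ with $f(z)\geq t$ and $w\in\frac34 C_n$, then $f\star\chi_{\frac34C_n}(x)\geq f(z)\geq t$.

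The left-hand side needs a different approach, because $f\star\tilde f$ does not split level-wise in a simple way. The natural comparison uses superlevel sets of the form $K_s-K_{r}$ with $sr\geq t$, and these are not of the form $K-K$ for a single $K$. I would therefore follow the continuous proof of \eqref{eq:FunctionalRogersSephard} in \cite{AGJV1}. There one writes
$$
\{f\star\tilde f\geq t\}=\bigcup_{s\in[t,1]}(K_s-K_{t/s}),
$$
and estimates it from above by a product-type construction. The cleanest route is to pass to $\R^{n+1}$. Set
$$
\widehat K=\{(x,s)\in\R^n\times\R : s\leq 0,\ f(x)\geq e^{s}\}.
$$
This set is convex but unbounded, so it has to be truncated. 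The difference body of the truncation controls the level sets of $f\star\tilde f$ in each horizontal slice, and a Rogers--Shephard inequality for sections and projections then gives the constant $\binom{2n}{n}$ after integrating out the extra coordinate.

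The discrete analogue needs a discrete section/projection Rogers--Shephard inequality. I would prove it by mimicking the proof of \eqref{eq:Rogers-Shephard discrete} in \cite{ALY}, which is where I expect the difficulty to lie. As a first attempt I would avoid this machinery entirely and bound pointwise instead: for $x\in\Z^n$, if $f\star\tilde f(x)\geq t$ then $x\in K_s-K_{t/s}\subseteq K_{t}-K_{t}$ for the relevant $s$. This already yields
$$
\int f\star\tilde f\,dG_n\leq\int_0^1 G_n(K_t-K_t)\,dt\leq\binom{2n}{n}\int_0^1G_n\Big(K_t+\tfrac34C_n\Big)dt.
$$
The first inequality is valid because $s\geq t$ and $t/s\geq t$ imply $K_s,K_{t/s}\subseteq K_t$. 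The second inequality is \eqref{eq:Rogers-Shephard discrete} applied to each $K_t$.

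Combining this with the first paragraph gives the result directly: the last integral is at most $\int_0^1 G_n(\{f\star\chi_{\frac34C_n}\geq t\})\,dt=\int f\star\chi_{\frac34C_n}\,dG_n$. The union description is the only point that needs care. Here $f\star\tilde f(x)=\sup_z f(z)f(z-x)$, so $f\star\tilde f(x)\geq t$ gives, for every $\varepsilon>0$, a point $z$ with $f(z)f(z-x)>t-\varepsilon$. Since both factors are at most $1$, each of $f(z)$ and $f(z-x)$ exceeds $t-\varepsilon$. Hence $x\in K_{t-\varepsilon}-K_{t-\varepsilon}$ for every $\varepsilon>0$.

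The main obstacle is this approximation. The inclusion only shows $\{f\star\tilde f\geq t\}\subseteq\bigcap_\varepsilon (K_{t-\varepsilon}-K_{t-\varepsilon})$. It suffices to compare with the strict superlevel sets $\{f\star\tilde f>t\}$, which agree with $\{f\star\tilde f\geq t\}$ for almost every $t$ and are contained in $K_{t'}-K_{t'}$ for some $t'>t$. Monotonicity in $t$ then closes the argument after integrating in $t$. I would also check that $K_t$ is bounded and non-empty for all $t\in(0,1)$, so that \eqref{eq:Rogers-Shephard discrete} applies; integrability of the log-concave $f$ ensures boundedness.
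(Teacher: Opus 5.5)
Your final argument is correct and is essentially the paper's proof: superlevel sets of $f\star\tilde f$ are placed inside difference bodies $K-K$ of superlevel sets of $f$, \eqref{eq:Rogers-Shephard discrete} is applied level by level, $K+\frac34C_n$ is absorbed into the corresponding superlevel set of $f\star\chi_{\frac34C_n}$, and the layer-cake formula is integrated (the paper parametrizes levels as $e^{-t}\Vert f\Vert_\infty$). The only difference is how the possibly non-attained supremum is handled: the paper uses $L_t(f\star\tilde f)\subseteq L_{t+\varepsilon}(f)-L_{t+\varepsilon}(f)$ and lets $\varepsilon\to0$, whereas your strict superlevel sets give $\{f\star\tilde f>t\}\subseteq\{f>t\}-\{f>t\}\subseteq K_t-K_t$ directly (the layer-cake formula holds with strict superlevel sets, so neither the ``almost every $t$'' step nor the $\R^{n+1}$/section--projection detour is needed).
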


The paper is organized as follows: In Section \ref{sec:Preliminaries} we will introduce some preliminary concepts which will be needed in order to prove Theorems \ref{thm:DiscreteFunctionalZhang} and \ref{thm:DiscreteFunctionalRogersShephard}. Since Berwald's inequality (see Section \ref{sec:Berwald} below) and its discrete version were the main tools to prove \eqref{eq:InclusionZhangRadialFunctions2} and \eqref{eq:InequalityDiscreteZhang}, Section \ref{sec:Discrete Berwald's inequalities} will be devoted to provide discrete versions of Berwald-type inequalities. On the one hand, it will be shown that the continuous and the discrete version of Berwald's inequality are equivalent, providing a new proof of the discrete version of Berwald's inequality (Theorem \ref{thm:BerwaldDiscreteNew}) which allows, in addition, to remove a condition on the maximum of the function in the discrete version of Berwald's inequality in \cite[Thm. 1.4]{ALY}. On the other hand, with a similar approach, we will provide a discrete version of Berwald's inequality in the setting of log-concave functions. In Section \ref{sec:FunctionalZhang} we will provide the proof of Theorem \ref{thm:DiscreteFunctionalZhang}. Finally, in Section \ref{sec:DiscreteFunctionalRogersShephard} we will prove Theorem \ref{thm:DiscreteFunctionalRogersShephard}.

\section{Preliminaries}\label{sec:Preliminaries}

\subsection{The lattice point enumerator}

The lattice point enumerator measure, $dG_n$, is the measure on $\R^n$ defined for any Borel set $A \subseteq \R^n$ as
\[
G_n(A)=\sharp(A\cap\Z^n),
\]
where $\sharp (\cdot)$ denotes the cardinality of a set. Whenever $A \subseteq \R^n$ is contained in the affine subspace $x_0+\text{span } \{e_1, \ldots, e_k \}$ for some $x_0 \in \text{span } \{e_1, \ldots, e_k \}^{\perp}$, we will denote $G_k(A)=G_n(A-x_0)$.

In \cite[Thm. 2.1]{IYNZ}, the authors proved that the lattice point enumerator satisfies the following discrete version of the Brunn-Minkowski inequality, from which one can recover the classical one \cite[Thm. 7.1.1]{Sch}:
\begin{thm}\label{thm: BM_lattice_point_no_G(K)G(L)>0}
Let $\lambda\in(0,1)$ and let $K,L\subset\R^n$ be non-empty bounded sets. Then
$$
G_n\left((1-\lambda)K+\lambda L+C_n\right)^{1/n}\geq(1-\lambda)G_n(K)^{1/n}+\lambda G_n(L)^{1/n}.
$$
\end{thm}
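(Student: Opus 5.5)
The plan is to reduce the statement to the classical Brunn--Minkowski inequality by replacing each lattice point with the half-open unit cube it generates, and then to compare volumes with lattice point counts. Write $Q=[0,1)^n$. Let $A=K\cap\Z^n$ and $B=L\cap\Z^n$, which are finite because $K$ and $L$ are bounded. If both $A$ and $B$ are empty, the right-hand side is $0$ and there is nothing to prove. If exactly one of them is empty, say $A$, then $G_n(K)=0$, and I replace $A$ by $A'=\{x_0\}$ for an arbitrary point $x_0\in K$, which exists since $K\neq\emptyset$; otherwise $A'=A$. Define $B'$ symmetrically. In every case $A'\subseteq K$, $B'\subseteq L$, and
\[
|A'+Q|\geq G_n(K),\qquad |B'+Q|\geq G_n(L).
\]
When $A'=A\subseteq\Z^n$ this is an equality, because the cubes $a+Q$ with $a\in\Z^n$ are pairwise disjoint. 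When $A'=\{x_0\}$ the left side is $1\geq 0$.

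Next I apply the classical Brunn--Minkowski inequality to the compact-closure versions of the sets $A'+Q$ and $B'+Q$. These sets are finite unions of cubes, and taking closures does not change their volume. Since Minkowski combinations distribute, $(1-\lambda)(A'+Q)+\lambda(B'+Q)=M+Q$ with $M:=(1-\lambda)A'+\lambda B'$, using the convexity of $Q$. This gives
\[
|M+Q|^{1/n}\geq(1-\lambda)G_n(K)^{1/n}+\lambda G_n(L)^{1/n}.
\]

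The key step, and the only real obstacle, is to show
\[
|M+Q|\leq G_n(M+C_n),
\]
where $M$ is an arbitrary finite set and need not consist of lattice points. Since the translates $z+Q$, $z\in\Z^n$, tile $\R^n$, the volume $|M+Q|$ is at most the number of $z\in\Z^n$ such that $(z+Q)\cap(m+Q)\neq\emptyset$ for some $m\in M$. For such $z$ we have $z-m\in Q-Q=(-1,1)^n=C_n$, so $z\in M+C_n$. Therefore the count is at most $G_n(M+C_n)$.

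Finally, monotonicity finishes the argument. Since $M\subseteq(1-\lambda)K+\lambda L$, we get $G_n(M+C_n)\leq G_n((1-\lambda)K+\lambda L+C_n)$, and combining this with the previous two displays yields the statement.
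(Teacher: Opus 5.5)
Your proof is correct. The paper does not prove this theorem itself; it quotes it from Iglesias, Yepes Nicol\'as and Zvavitch, and your route is essentially their proof: replace lattice points by half-open unit cubes, apply the classical Brunn--Minkowski inequality, and then use $M+[0,1)^n\subseteq((M+C_n)\cap\Z^n)+[0,1)^n$. Your replacement of an empty $K\cap\Z^n$ by a singleton $\{x_0\}\subseteq K$ also cleanly justifies the paper's remark that the inequality still holds when $G_n(K)G_n(L)=0$.
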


\begin{rmk}
Let us point out that Theorem \ref{thm: BM_lattice_point_no_G(K)G(L)>0} is true as long as $K,L\subset\R^n$ are non-empty bounded sets, even if $G_n(K)$ or $G_n(L)$ equals 0.
\end{rmk}

Notice that given a Borel set $A\subseteq\R^n$, for every $x\in A\cap\Z^n$ we have that $x+\frac{1}{2}B_\infty^n$ is a cube with volume $|x+\frac{1}{2}B_\infty^n|=1$ and if $x_1,x_2\in A\cap\Z^n$ with $x_1\neq x_2$, then  $(x_1+\frac{1}{2}B_\infty^n)\cap(x_2+\frac{1}{2}B_\infty^n)$ has volume 0. Taking also into account that $A\cap\Z^n\subseteq A$ and then $(A\cap\Z^n)+\frac{1}{2}B_\infty^n\subseteq A+\frac{1}{2}B_\infty^n$, we have that for any Borel set $A\subseteq\R^n$,
\begin{equation}\label{eq:LatticePointAndVolume}
G_n(A)=\left|(A\cap\Z^n)+\frac{1}{2}B_\infty^n\right|\leq\left|A+\frac{1}{2}B_\infty^n\right|.
\end{equation}

The lattice point enumerator measure also relates to Lebesgue measure in the following way (see \cite[Lem. 3.22]{TV} and \cite[Sect. 3.1]{ALY}): For any convex body $K \subseteq \R^n$ and any bounded set $M$ containing the origin
\begin{equation}\label{discrete_continuous_volume}
\lim_{\lambda \to \infty} \frac{G_n(\lambda K+M)}{\lambda^n} = |K|_n.
\end{equation}

In particular, taking $M=\{0\}$, for any convex body $K \subseteq \R^n$, we have
$$
\lim_{\lambda \to \infty} \frac{G_n(\lambda K)}{\lambda^n} = |K|_n.
$$

Moreover, for any $f: K \to \R$, a Riemann-integrable function on a convex body $K$,
\begin{equation} \label{Riemann_integrability}
\lim_{\lambda \to \infty} \frac{1}{\lambda^n} \int_{\lambda K} f\left( \frac{x}{\lambda} \right) dG_n(x) = \lim_{\lambda \to \infty} \frac{1}{\lambda^n} \sum_{x \in K \cap \left(\frac{1}{\lambda}\Z^n \right)} f(x) = \int_K f(x) dx.
\end{equation}

As a consequence of these equalities, many continuous inequalities can be recovered from the discrete versions obtained by using the discrete version of Brunn-Minkowski inequality given by Theorem \ref{thm: BM_lattice_point_no_G(K)G(L)>0}.

We will also consider along the paper the measure on $\R^{n}$ given by $d\mu(x)=d\mu_n(x):=dG_{n-1}\otimes dm_1(x)$, where $m_n$ denotes the Lebesgue measure on $\R^n$. Notice that for any $f:L\to[0,\infty)$ with $L\subseteq\R^n$, we have
$$
\int_{L}f(x)dG_n(x)=\mu_{n+1}(\textrm{hyp}(f)),
$$
where $\textrm{hyp}(f)$ is the hypograph of $f$ defined as
$$
\textrm{hyp}(f):=\{(x,t)\in L\times[0,\infty)\,:\, 0\leq t\leq f(x)\}.
$$
The measure $\mu$ satisfies (see \cite[Lem. 4.2]{ALM}) that for any bounded convex set $C\subseteq\R^n$
\begin{equation}\label{eq:MuAndGn}
G_n(C)-G_{n-1}(P_{e_n^\perp}(C))\leq \mu(C)\leq G_n(C)+G_{n-1}(P_{e_n^\perp}(C)).
\end{equation}
and, as a consequence, for any bounded convex set $C\subseteq\R^n$ and any bounded set $M$ containing the origin
$$
\lim_{\lambda \to\infty}\frac{\mu(\lambda C+M)}{\lambda^n}=|C|_n.
$$

\subsection{Log-concave functions}\label{sec:LogconcaveFunctions}

A function $f:\R^n\to[0,\infty)$ is called log-concave if for every $x_1,x_2\in\R^n$ and every $\lambda\in[0,1]$
$$
f((1-\lambda)x_1+\lambda x_2)\geq f(x_1)^{1-\lambda}f(x_2)^\lambda.
$$
Equivalently, $f$ is log-concave if $f(x)=e^{-u(x)}$ for some convex function $u:\R^n\to(-\infty,\infty]$. We will denote by $\mathcal{F}(\R^n)$ the set of integrable log-concave functions on $\R^n$ with positive integral. Equivalently, $\mathcal{F}(\R^n)$ is the set of integrable log-concave functions on $\R^n$ with full dimensional support. Let us point out that (see \cite[Lem. 2.2.1]{BGVV}), if $f\in\mathcal{F}(\R^n)$, then there exist $A,B>0$ such that $f(x)\leq A e^{-B\Vert x\Vert_2}$ for every $x\in\R^n$ and then $f$ has moments of all orders.

Given any function $u:\R^n\to(-\infty,\infty]$, its epigraph is the set
$$
\textrm{epi}(u):=\{(x,t)\in\R^{n+1}\,:\, u(x)\leq t\}=\{(x,t)\in D\times\R\,:\, u(x)\leq t\},
$$
where $D=\{x\in\R^n\,:\,u(x)\neq\infty\}$. Notice that  $u$ is a convex function if and only if $\textrm{epi}(u)$ is a convex set.



Along the paper, given any bounded log-concave function $f$ on $\R^n$, we will denote by $L(f)\subseteq \R^{n+1}$ the set
$$
L(f):=\{(x,t)\in\R^n\times[0,\infty)\,:\, f(x)\geq e^{-t}\Vert f\Vert_\infty\},
$$
which is the epigraph of the convex function $u:\R^n\to[0,\infty]$ such that $\frac{f(x)}{\Vert f\Vert_\infty}=e^{-u(x)}$ for every $x\in\R^n$. Therefore, $L(f)$ is a convex set for every $f\in\mathcal{F}(\R^n)$. We will also denote, for any $t\geq 0$,
$$
L_t(f):=\{x\in\R^n\,:\,\,f(x)\geq e^{-t}\Vert f\Vert_\infty\}=\{x\in\R^n\,:\,(x,t)\in L(f)\}.
$$
Notice that if $f\in\mathcal{F}(\R^n)$, then $L_t(f)$ is a bounded convex set for any $t\geq0$, that $L_{t_1}(f)\subseteq L_{t_2}(f)$ for every $0\leq t_1\leq t_2$, and that, $L_t(f)=\bigcap_{\varepsilon>0}L_{t+\varepsilon}(f)$ for any $t\geq0$.

We will also consider, along the paper, the measures on $\R^{n+1}$ defined as
\begin{itemize}
\item $d\nu(x,t)=d\nu_{n+1}(x,t):=dG_n(x)\otimes e^{-t} dt$,
\item $d\overline{\nu}(x,t)=d\overline{\nu}_{n+1}(x,t):=dm_n(x)\otimes e^{-t} dt$,
\item $d\sigma(x,t)=d\sigma_{n+1}(x,t):=d\mu_n(x)\otimes e^{-t}dt$.
\end{itemize}
If $f\in\mathcal{F}(\R^n)$, the measure $\nu$ of $L(f)$ equals
\begin{equation}\label{eq:nu(L)Integral}
\nu(L(f))=\int_0^\infty e^{-t} G_n(L_t(f))dt=\int_0^1\int_{\R^n}\chi_{\left\{x\in\R^n\,:\,\frac{f(x)}{\Vert f\Vert_\infty} \geq s \right\}}(x)dG_n(x)ds=\int_{\R^n}\frac{f(x)}{\Vert f\Vert_\infty}dG_n(x),
\end{equation}
the measure $\overline{\nu}$ of $L(f)$ equals
\begin{equation}\label{eq:barnu(L)Integral}
\overline{\nu}(L(f))=\int_0^\infty e^{-t} |L_t(f)|dt=\int_0^1\int_{\R^n}\chi_{\left\{x\in\R^n\,:\,\frac{f(x)}{\Vert f\Vert_\infty}\geq s\right\}}(x)dxds=\int_{\R^n}\frac{f(x)}{\Vert f\Vert_\infty}dx,
\end{equation}
and the measure $\sigma$ of $L(f)$ equals
\begin{equation}\label{eq:sigma(L)Integral}
\sigma(L(f))=\int_0^\infty e^{-t} \mu_n(L_t(f))dt=\int_0^1\int_{\R^n}\chi_{\left\{x\in\R^n\,:\,\frac{f(x)}{\Vert f\Vert_\infty}\geq s\right\}}(x)d\mu_n(x)ds=\int_{\R^n}\frac{f(x)}{\Vert f\Vert_\infty}d\mu_n(x).
\end{equation}

\subsubsection{The Asplund product of log-concave functions}\label{sec:AsplundProduct}

Given $f$ and $g$ log-concave functions, the Asplund product of $f$ and $g$ is the function
$$
f\star g(x):=\sup_{z\in\R^n}f(z)g(x-z)=\sup_{z_1+z_2=x\atop z_1,z_2\in\R^n}f(z_1)g(z_2).
$$
Notice that $\Vert f\star g\Vert_\infty=\Vert f\Vert_\infty\Vert g\Vert_\infty$ an that, as a consequence of \cite[Thm. 2.1]{AGJV1}, if $f,g\in\mathcal{F}(\R^n)$, then $f\star g\in\mathcal{F}(\R^n)$. Besides, we have that
$$
L(f)+L(g)\subseteq L(f\star g)
$$
and, for every $t_1, t_2\geq0$,
$$
L_{t_1}(f)+L_{t_2}(g)\subseteq L_{t_1+t_2}(f\star g).
$$
Thus, if $g=\chi_C$ for some convex set $C\subseteq\R^n$, we have that $L_{\varepsilon}(g)=C$ for every $\varepsilon\geq0$. Therefore, for every $t\geq0$
$$
L_{t}(f)+C\subseteq L_{t+\varepsilon}(f\star \chi_C)
$$
and, consequently,
\begin{equation}\label{eq:InclusionSuperlevelSetsAsplund}
L_{t}(f)+C\subseteq \bigcap_{\varepsilon>0}L_{t+\varepsilon}(f\star \chi_C)=L_t(f\star\chi_C).
\end{equation}

On the other hand, for any $t\geq0$ and any $x\in L_t(f\star\chi_C)$, there exists a sequence $(z_n)_{n=1}^\infty\in C$ such that
$$
\lim_{n\to\infty}f(x-z_n)=(f\star\chi_C)(x)\geq e^{-t}\Vert f\Vert_\infty.
$$
Moreover, if $C$ is bounded we can assume that $(z_n)_{n=1}^\infty$ converges to some $z\in\overline{C}$, the closure of $C$. Therefore, for every $\varepsilon>0$ there exists $n_0\in\N$ such that if $n\geq n_0$ we have that $x-z_n\in L_{t+\varepsilon}(f)$ and then $x-z\in \overline{L_{t+\varepsilon}(f)}$ for every $\varepsilon>0$. Consequently, if $C$ is bounded we have that
\begin{equation}\label{eq:InverseInclusionSuperLevelSetsAsplund}
L_t(f\star\chi_C)\subseteq\bigcap_{\varepsilon>0}\overline{L_{t+\varepsilon}(f)}+\overline{C}.
\end{equation}

As a consequence of \eqref{eq:InclusionSuperlevelSetsAsplund}, we have the following lemma which ensures that if $f\in\mathcal{F}(\R^n)$, then $f$ is integrable respect to the lattice point enumerator measure, $dG_n$.

\begin{lemma}\label{lem:Integrability}
Let $f\in\mathcal{F}(\R^n)$. Then $f$ is integrable respect to $dG_n$ and
$$
\int_{\R^n}f(x)dG_n(x)\leq \int_{\R^n}f\star\chi_{\frac{1}{2}B_\infty^n}(x)dx.
$$
\end{lemma}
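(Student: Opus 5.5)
The plan is to pass to level sets and compare lattice points with volume at each level, using \eqref{eq:LatticePointAndVolume} together with the inclusion \eqref{eq:InclusionSuperlevelSetsAsplund}. We may assume $\Vert f\Vert_\infty=1$, since both sides of the inequality scale linearly in $f$.

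\textbf{Layer-cake reduction.} By \eqref{eq:nu(L)Integral} we have
$$
\int_{\R^n}f(x)\,dG_n(x)=\int_0^\infty e^{-t}\,G_n(L_t(f))\,dt .
$$
Applying \eqref{eq:barnu(L)Integral} to $f\star\chi_{\frac12 B_\infty^n}$, which belongs to $\mathcal{F}(\R^n)$ by \cite[Thm. 2.1]{AGJV1} and satisfies $\Vert f\star\chi_{\frac12 B_\infty^n}\Vert_\infty=1$, gives
$$
\int_{\R^n}f\star\chi_{\frac12 B_\infty^n}(x)\,dx=\int_0^\infty e^{-t}\,\bigl|L_t\bigl(f\star\chi_{\frac12 B_\infty^n}\bigr)\bigr|\,dt .
$$
It therefore suffices to compare the integrands for every $t\geq 0$.

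\textbf{Pointwise comparison.} For each $t\geq0$, \eqref{eq:LatticePointAndVolume} gives
$$
G_n(L_t(f))\leq \Bigl|L_t(f)+\tfrac12 B_\infty^n\Bigr| .
$$
Taking $C=\frac12 B_\infty^n$ (a convex set) in \eqref{eq:InclusionSuperlevelSetsAsplund} gives $L_t(f)+\frac12 B_\infty^n\subseteq L_t\bigl(f\star\chi_{\frac12 B_\infty^n}\bigr)$. Combining the two, $G_n(L_t(f))\leq \bigl|L_t\bigl(f\star\chi_{\frac12 B_\infty^n}\bigr)\bigr|$. Multiplying by $e^{-t}$ and integrating yields the stated inequality.

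\textbf{Integrability.} This is the step that needs a little care: we must show the right-hand side is finite, which then bounds the nonnegative sum $\int f\,dG_n$. Since $f\star\chi_{\frac12 B_\infty^n}\in\mathcal{F}(\R^n)$, it is Lebesgue integrable, so the right-hand side is finite. Measurability is not an issue: $G_n(L_t(f))$ is monotone in $t$, and $f$ is Borel because it is log-concave. Hence $f$ is integrable with respect to $dG_n$.
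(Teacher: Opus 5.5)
Your proposal is correct and follows the paper's proof essentially step for step. The paper also rewrites both sides via \eqref{eq:nu(L)Integral} and \eqref{eq:barnu(L)Integral}, uses the levelwise bound $G_n(L_t(f))\leq|L_t(f)+\frac12B_\infty^n|\leq|L_t(f\star\chi_{\frac12B_\infty^n})|$ from \eqref{eq:LatticePointAndVolume} and \eqref{eq:InclusionSuperlevelSetsAsplund}, and gets finiteness from $f\star\chi_{\frac12B_\infty^n}\in\mathcal{F}(\R^n)$. One aside is inaccurate but harmless: a log-concave function need not be Borel (for example, $\chi_C$ with $C$ an open disc together with a non-Borel subset of its boundary circle). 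No measurability of $f$ is actually needed, because $\int f\,dG_n=\sum_{x\in\Z^n}f(x)$.
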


\begin{proof}
Let $f\in\mathcal{F}(\R^n)$. Then by \eqref{eq:nu(L)Integral}, \eqref{eq:LatticePointAndVolume}, \eqref{eq:InclusionSuperlevelSetsAsplund} and \eqref{eq:barnu(L)Integral}, we have
\begin{eqnarray*}
\int_{\R^n}\frac{f(x)}{\Vert f\Vert_\infty}dG_n(x)&=&\nu(L(f))=\int_0^\infty e^{-t} G_n(L_t(f))dt\leq\int_0^\infty e^{-t}\left|L_t(f)+\frac{1}{2}B_\infty^n\right|dt\cr
&\leq&\int_0^\infty e^{-t}\left|L_t(f\star\chi_{\frac{1}{2}B_\infty^n})\right|dt=\overline{\nu}\left(L\left(f\star\chi_{\frac{1}{2}B_\infty^n}\right)\right)\cr
&=&\int_{\R^n}\frac{f\star\chi_{\frac{1}{2}B_\infty^n}(x)}{\Vert f\Vert_\infty}dx.
\end{eqnarray*}
Since $f\star\chi_{\frac{1}{2}B_\infty^n} \in\mathcal{F}(\R^n)$, the last integral is finite.
\end{proof}

Given $f\in\mathcal{F}(\R^n)$ and $\lambda>0$, we will denote by $f_\lambda:\R^n\to[0,\infty)$ the function given by $f_\lambda(x)=f\left(\frac{x}{\lambda}\right)$. Notice that $f_{\lambda} \in \mathcal{F}(\R^n)$, $\Vert f_\lambda\Vert_\infty=\Vert f\Vert_\infty$ and that for any $t\geq 0$ we have
$$
L_t(f_\lambda)=\{ x \in \R^n : f_\lambda(x) \geq e^{-t} \norma{f_\lambda}_{\infty} \}=\left\{x\in\R^n\,:\,\frac{x}{\lambda}\in L_t(f)\right\}=\lambda L_t(f).
$$
Notice also that for any $f, g\in\mathcal{F}(\R^n)$ and any $\lambda>0$ we have
\begin{equation}\label{eq:IdentityLambda}
f_{\lambda} \star g_{\lambda} (x)= \sup_{z\in\R^n}f_{\lambda}(z) g_{\lambda}(x-z) = \sup_{z\in\R^n}f\left(\frac{z}{\lambda} \right) g\left(\frac{x-z}{\lambda} \right) = f \star g \left( \frac{x}{\lambda} \right)=(f\star g)_\lambda(x).
\end{equation}

The following lemma shows that \eqref{Riemann_integrability} holds also when $f\in\mathcal{F}(\R^n)$ does not necessarily have compact support.
\begin{lemma}\label{lem:LogConcaveIntegralDiscreteToContinuous}
Let $f\in\mathcal{F}(\R^n)$ be continuous on its support and, for any $\lambda>0$, let  $f_\lambda\in\mathcal{F}(\R^n)$ be the function defined as $f_\lambda(x)= f\left(\frac{x}{\lambda}\right)$. Let also $M$ be a bounded set containing the origin. Then,
$$
\lim_{\lambda\to\infty}\frac{1}{\lambda^n}\int_{\R^n}f_\lambda\star\chi_{M}(x)dG_n(x)=\int_{\R^n}f(x)dx.
$$
In particular, taking $M=\{0\}$,
$$
\lim_{\lambda\to\infty}\frac{1}{\lambda^n}\int_{\R^n}f_\lambda(x)dG_n(x)=\int_{\R^n}f(x)dx.
$$
\end{lemma}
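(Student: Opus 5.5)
We reduce the statement to the level sets $L_t(f)$ via the layer-cake identities \eqref{eq:nu(L)Integral} and \eqref{eq:barnu(L)Integral}, together with the analogous identity for $f_\lambda\star\chi_M$. Normalize $\Vert f\Vert_\infty=1$. Since $\Vert\chi_M\Vert_\infty=1$, we have $\Vert f_\lambda\star\chi_M\Vert_\infty=1$, and
$$
\frac{1}{\lambda^n}\int_{\R^n}f_\lambda\star\chi_M(x)\,dG_n(x)=\int_0^\infty e^{-t}\,\frac{G_n\big(L_t(f_\lambda\star\chi_M)\big)}{\lambda^n}\,dt .
$$
The plan is to find the pointwise limit in $t$ of the integrand, and then to apply dominated convergence.

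\textbf{Lower bound.} By \eqref{eq:InclusionSuperlevelSetsAsplund}, applied with $C$ replaced by $M$, we have $L_t(f_\lambda)+M\subseteq L_t(f_\lambda\star\chi_M)$. (Convexity of $C$ is not really needed for this inclusion: $f(x-z)\chi_M(z)\ge e^{-t}$ for $z\in M$, $x-z\in L_t(f)$.) Since $L_t(f_\lambda)=\lambda L_t(f)$, we get
$$
G_n\big(L_t(f_\lambda\star\chi_M)\big)\ge G_n(\lambda L_t(f)+M).
$$
For $t>0$, $L_t(f)$ is a convex body: it is bounded, and it has nonempty interior because $f$ has full-dimensional support and $\sup f=1$, so for $t>0$ the level set contains a neighbourhood of near-maximizers in the interior of the support. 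Therefore \eqref{discrete_continuous_volume} gives $\liminf_{\lambda\to\infty}\lambda^{-n}G_n(L_t(f_\lambda\star\chi_M))\ge |L_t(f)|$.

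\textbf{Upper bound.} By \eqref{eq:InverseInclusionSuperLevelSetsAsplund}, for any fixed $\varepsilon>0$,
$$
L_t(f_\lambda\star\chi_M)\subseteq \lambda\overline{L_{t+\varepsilon}(f)}+\overline M .
$$
Applying \eqref{discrete_continuous_volume} to the convex body $\overline{L_{t+\varepsilon}(f)}$ and the bounded set $\overline M\ni 0$ gives $\limsup_{\lambda\to\infty}\lambda^{-n}G_n(\cdots)\le |L_{t+\varepsilon}(f)|$. Letting $\varepsilon\to0$ we obtain $|L_{t+\varepsilon}(f)|\to|L_t(f)|$ for all but countably many $t$, namely the continuity points of the monotone function $t\mapsto|L_t(f)|$. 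So the integrand converges to $|L_t(f)|$ for a.e. $t$. Continuity of $f$ on its support ensures that the closure adds only boundary points of measure zero, which keeps the level-set bookkeeping clean.

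\textbf{Domination.} This is the main obstacle, since we need a bound uniform in $\lambda\ge1$. By \eqref{eq:LatticePointAndVolume},
$$
G_n(A)\le\big|A+\tfrac12B_\infty^n\big|.
$$
Take $R$ with $\overline M+\frac12B_\infty^n\subseteq RB_\infty^n$. For $\lambda\ge1$, and using $0\in L_s(f)$ up to translation (or fixing a point $x_0$ in $L_1(f)$ and noting that $\lambda K+RB_\infty^n\subseteq\lambda(K+RB_\infty^n)$ when $\lambda\ge1$ and $K$ contains the origin), we get
$$
\lambda^{-n}G_n\big(L_t(f_\lambda\star\chi_M)\big)\le \big|\overline{L_{t+1}(f)}+RB_\infty^n\big| .
$$
This is polynomial in $t$: $f\le Ae^{-B\Vert x\Vert_2}$ forces $L_{t+1}(f)$ to lie in a ball of radius $O(1+t)$. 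Hence $e^{-t}$ times this bound is integrable on $(0,\infty)$. The translation issue is handled by translating $f$ so that $0\in L_1(f)$ before starting; such a translation is harmless in the limit, since it becomes a translation by $\lambda x_0$ of the lattice count, and \eqref{discrete_continuous_volume} is insensitive to this when the bounded set is adjusted. Alternatively, one can simply bound by $|\lambda(\overline{L_{t+1}(f)}-x_0)+\lambda x_0+RB_\infty^n|$ directly in volume, which needs no lattice structure. Dominated convergence then gives the limit $\int_0^\infty e^{-t}|L_t(f)|\,dt=\int_{\R^n}f$, by \eqref{eq:barnu(L)Integral}. The case $M=\{0\}$ is immediate, since $f_\lambda\star\chi_{\{0\}}=f_\lambda$.
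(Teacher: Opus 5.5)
Your argument is correct and has the same skeleton as the paper's proof: the layer-cake formula \eqref{eq:nu(L)Integral}, pointwise convergence of $\lambda^{-n}G_n(L_t(f_\lambda\star\chi_M))$ via \eqref{eq:InclusionSuperlevelSetsAsplund}, \eqref{eq:InverseInclusionSuperLevelSetsAsplund} and \eqref{discrete_continuous_volume}, and then dominated convergence using \eqref{eq:LatticePointAndVolume}. The difference lies in the upper bound.

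\textbf{Upper bound.} The paper uses continuity of $f$ on its support to get that $L(f)$ is closed. This turns \eqref{eq:InverseInclusionSuperLevelSetsAsplund} into the exact sandwich $\lambda L_t(f)+M\subseteq L_t(f_\lambda\star\chi_M)\subseteq\lambda L_t(f)+\overline{M}$, so both bounds converge to $|L_t(f)|$ directly. You instead keep the slack $\lambda\overline{L_{t+\varepsilon}(f)}+\overline{M}$ and let $\varepsilon\to0$ afterwards. Since $L_t(f)=\bigcap_{\varepsilon>0}L_{t+\varepsilon}(f)$ and these sets are bounded, $|L_{t+\varepsilon}(f)|\downarrow|L_t(f)|$ for every $t$, not merely off a countable set.

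Your route costs one extra monotone limit, but it means the continuity hypothesis is never really used. Your remark about closures is beside the point, since boundaries of convex sets are always null. So your argument gives the lemma for all $f\in\mathcal{F}(\R^n)$, once you make one adjustment in the lower bound: for $t>0$, approximate a possibly non-closed $L_t(f)$ from inside by compact convex bodies before applying \eqref{discrete_continuous_volume}.

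\textbf{Domination.} The paper uses $|L_t(f\star\chi_{B_\infty^n})|$ for $\lambda\ge R+\frac12$, with integrability coming from $f\star\chi_{B_\infty^n}\in\mathcal{F}(\R^n)$. You use $|\overline{L_{t+1}(f)}+RB_\infty^n|$ for $\lambda\ge1$ together with the bound $f\le Ae^{-B\Vert x\Vert_2}$. Both work.

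\textbf{Translation.} Your translation discussion is unnecessary. For $\lambda\ge1$, the inclusion $RB_\infty^n\subseteq\lambda RB_\infty^n$ gives $\lambda K+RB_\infty^n\subseteq\lambda(K+RB_\infty^n)$ for every set $K$, whether or not $0\in K$. Moreover, your first suggested fix, translating $f$ in advance, would not be a valid reduction. It shifts the lattice relative to $f_\lambda$ by $\lambda x_0$, and so changes the sums whose limit you want.
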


\begin{proof}
Notice that, since $\Vert f_\lambda\star\chi_M\Vert_\infty=\Vert f_\lambda\Vert_\infty=\Vert f\Vert_\infty$, by \eqref{eq:nu(L)Integral},
$$
\lim_{\lambda \to \infty} \int_{\R^n} \frac{f_\lambda\star\chi_M (x)}{\Vert f\Vert_\infty} dG_n(x)=\lim_{\lambda\to\infty}\int_0^\infty e^{-t} \frac{G_n (L_t (f_\lambda\star\chi_M))}{\lambda^n}dt.
$$

On the one hand, since $f$ is continuous on its support, we have that $L(f)$ is closed. Thus, by \eqref{eq:InclusionSuperlevelSetsAsplund} and \eqref{eq:InverseInclusionSuperLevelSetsAsplund}, we have that for every $t\geq0$ and every $\lambda>0$
$$
L_t(f_\lambda)+M\subseteq L_t(f_\lambda\star\chi_M)\subseteq\bigcap_{\varepsilon>0}\overline{L_{t+\varepsilon}(f_\lambda)}+\overline{M}=\bigcap_{\varepsilon>0}L_{t+\varepsilon}(f_\lambda)+\overline{M}=L_t(f_\lambda)+\overline{M}.
$$
Therefore, for every $t\geq0$ and every $\lambda>0$,
$$
\lambda L_t(f)+M\subseteq L_t(f_\lambda\star\chi_M)\subseteq \lambda L_t(f)+\overline{M}.
$$
Since by \eqref{discrete_continuous_volume}, for every $t\geq0$ we have that
$$
\lim_{\lambda\to\infty}e^{-t} \frac{G_n (\lambda L_t (f)+M)}{\lambda^n}=e^{-t} |L_t (f)|\quad\textrm{and}\quad\lim_{\lambda\to\infty}e^{-t} \frac{G_n (\lambda L_t (f)+\overline{M})}{\lambda^n}=e^{-t} |L_t (f)|,
$$
we obtain that for every $t\geq0$
$$
\lim_{\lambda\to\infty}e^{-t}\frac{G_n (L_t(f_\lambda\star\chi_M))}{\lambda^n}=e^{-t}|L_t(f)|.
$$

On the other hand, since $M$ is bounded, there exists $R>0$ such that $\overline{M}\subseteq RB_\infty^n$. Therefore,  using \eqref{eq:LatticePointAndVolume} and \eqref{eq:InclusionSuperlevelSetsAsplund}, for every $t>0$ and every $\lambda\geq R+\frac{1}{2}$
\begin{eqnarray*}
G_n(L_t(f_\lambda\star\chi_M))&\leq& G_n(\lambda L_t(f)+\overline{M})\leq\left|\lambda L_t(f)+\overline{M}+\frac{1}{2}B_\infty^n\right|\cr
&\leq&\left|\lambda L_t(f)+\left(R+\frac{1}{2}\right)B_\infty^n\right|\leq\left|\lambda L_t(f)+\lambda B_\infty^n\right|=\lambda^n|L_t(f)+B_\infty^n|\cr
&\leq&\lambda^n L_t(f\star\chi_{B_\infty^n}).
\end{eqnarray*}
 Thus, for every $t\geq0$ and every $\lambda>R+\frac{1}{2}$ we have
$$
e^{-t} \frac{G_n(L_t(f_\lambda\star\chi_M))}{\lambda^n}\leq e^{-t}\left|L_t (f\star \chi_{B_\infty^n})\right|,
$$
which, taking into account \eqref{eq:barnu(L)Integral}, is integrable on $[0,\infty)$, since $f\star \chi_{B_\infty^n}\in\mathcal{F}(\R^n)$.

By the dominated convergence theorem,
$$
\lim_{\lambda\to\infty}\int_0^\infty e^{-t} \frac{G_n(L_t(f_\lambda\star\chi_M))}{\lambda^n}dt=\int_0^\infty e^{-t} |L_t (f)|dt=\int_{\R^n}\frac{f(x)}{\Vert f\Vert_\infty}dx,
$$
which proves the lemma.
\end{proof}

\subsubsection{Projections of log-concave functions}\label{sec:ProjectionsOfFunctions}

Given $f\in\mathcal{F}(\R^n)$ and a $k$-dimensional linear subspace in $\R^n$, $H\in G_{n,k}$, where $G_{n,k}$ denotes the set of $k$-dimensional linear subspaces in $\R^n$, the projection of $f$ onto $H$ was defined in \cite{KM}, and it is the log-concave function $P_Hf:H\to[0,\infty)$ given by
$$
P_Hf(y)=\sup_{z\in H^\perp}f(y+z).
$$
Notice  that $\Vert P_Hf\Vert_\infty=\Vert f\Vert_\infty$. Besides, as a direct consequence of \cite[Lem. 2.1.1]{BGVV}, $P_Hf\in\mathcal{F}(H)$, the set of integrable log-concave functions in $H$ with positive integral. Besides, the set $L(P_Hf)=\{(y,t)\in H\times[0,\infty)\,:\,P_Hf(y)\geq e^{-t}\Vert f\Vert_\infty \}$ satisfies that $L(P_Hf)=P_H(L(f))$ and, therefore, for any $t\geq0$, $L_t(P_Hf):=\{y\in H\,:\,P_H f\geq e^{-t}\Vert f\Vert_\infty\}=P_H(L_t(f))$. Let us point out that, if $f=\chi_K$ for some convex set $K\subseteq\R^n$, then $P_Hf=\chi_{P_H(K)}$ for any linear subspace $H$ and if $f=e^{-\Vert \cdot\Vert_K}$ for some convex body $K$ with $0\in K$, then $P_Hf=e^{-\Vert \cdot\Vert_{P_H(K)}}$ for any linear subspace $H$.

Notice also that, by \eqref{eq:MuAndGn}, if $f\in\mathcal{F}(\R^n)$ then
\begin{eqnarray*}
\sigma(L(f))&=&\int_0^\infty e^{-t} \mu_n(L_t(f))dt\leq \int_0^\infty e^{-t}(G_n(L_t(f))+G_{n-1}(P_{e_n^\perp}(L_t(f)))dt\cr
&=&\int_0^\infty e^{-t}(G_n(L_t(f))+G_{n-1}(L_t(P_{e_n^\perp}f)))dt=\nu_n(L(f))+\nu_{n-1}(L(P_{e_n^\perp}f))
\end{eqnarray*}
and, similarly
\begin{eqnarray*}
\sigma(L(f))&=&\int_0^\infty e^{-t} \mu_n(L_t(f))dt\geq \int_0^\infty e^{-t}(G_n(L_t(f))-G_{n-1}(P_{e_n^\perp}(L_t(f)))dt\cr
&=&\int_0^\infty e^{-t}(G_n(L_t(f))-G_{n-1}(L_t(P_{e_n^\perp}f)))dt=\nu_n(L(f))-\nu_{n-1}(L(P_{e_n^\perp}f)).
\end{eqnarray*}
Therefore, if $f\in\mathcal{F}(\R^n)$, we have that $P_{e_n^\perp}f\in\mathcal{F}(e_n^\perp)$ and, by Lemma \ref{lem:Integrability}, both $f$ and $P_{e_n^\perp}f$ are integrable on $\R^n$ and $e_n^\perp$ with respect to $dG_n$ and $dG_{n-1}$ and, therefore, $f$ is integrable on $\R^n$ with respect to $d\mu$ with
\begin{eqnarray}\label{eq:integralMuandm}
\int_{\R^n}\frac{f(x)}{\Vert f\Vert_\infty}dG_n(x)-\int_{e_n^\perp}\frac{P_{e_n^\perp}f(x)}{\Vert f\Vert_\infty}dG_{n-1}(x)&\leq&\int_{\R^n}\frac{f(x)}{\Vert f\Vert_\infty}d\mu(x)\cr
&\leq&\int_{\R^n}\frac{f(x)}{\Vert f\Vert_\infty}dG_n(x)+\int_{e_n^\perp}\frac{P_{e_n^\perp}f(x)}{\Vert f\Vert_\infty}dG_{n-1}(x).
\end{eqnarray}

As a consequence, we also have the following lemma, analogous to Lemma \ref{lem:LogConcaveIntegralDiscreteToContinuous},
\begin{lemma}\label{lem:LogConcaveIntegralMuToContinuous}
Let $f\in\mathcal{F}(\R^n)$ be continuous on its support and, for any $\lambda>0$, let $f_\lambda\in\mathcal{F}(\R^n)$ be the function defined as $f_\lambda(x)= f\left(\frac{x}{\lambda}\right)$. Let also $M$ be a bounded set containing the origin. Then,
$$
\lim_{\lambda\to\infty}\frac{1}{\lambda^n}\int_{\R^n}f_\lambda\star\chi_{M}(x)d\mu(x)=\int_{\R^n}f(x)dx.
$$
In particular, taking $M=\{0\}$,
$$
\lim_{\lambda\to\infty}\frac{1}{\lambda^n}\int_{\R^n}f_\lambda(x)d\mu(x)=\int_{\R^n}f(x)dx.
$$
\end{lemma}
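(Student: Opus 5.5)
The plan is to deduce the statement from Lemma \ref{lem:LogConcaveIntegralDiscreteToContinuous} together with the two-sided comparison \eqref{eq:integralMuandm}, applied to $f_\lambda\star\chi_M$.

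\textbf{Step 1: the upper bound for $\mu$.} Set $g_\lambda:=f_\lambda\star\chi_M$. By \eqref{eq:IdentityLambda} (or directly), $g_\lambda\in\mathcal{F}(\R^n)$ and $\Vert g_\lambda\Vert_\infty=\Vert f\Vert_\infty$. Applying \eqref{eq:integralMuandm} to $g_\lambda$ and multiplying by $\Vert f\Vert_\infty$ gives
$$
\left|\int_{\R^n}g_\lambda\,d\mu-\int_{\R^n}g_\lambda\,dG_n\right|\leq\int_{e_n^\perp}P_{e_n^\perp}g_\lambda(y)\,dG_{n-1}(y).
$$
Lemma \ref{lem:LogConcaveIntegralDiscreteToContinuous} already gives $\lambda^{-n}\int g_\lambda\,dG_n\to\int f$. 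So it suffices to show that the right-hand side is $o(\lambda^n)$.

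\textbf{Step 2: controlling the projection term.} We have
$$
P_{e_n^\perp}g_\lambda(y)=\sup_{s}\sup_{m\in M}f_\lambda(y+se_n-m)\leq \sup_{m'\in P_{e_n^\perp}M}P_{e_n^\perp}f_\lambda(y-m')=\bigl(P_{e_n^\perp}f\bigr)_\lambda\star\chi_{P_{e_n^\perp}M}(y).
$$
Indeed, the $e_n$-component of $m$ is absorbed into the supremum over $s$. Replacing $M$ by $RB_\infty^n$, which contains $M$, we may assume $P_{e_n^\perp}M$ is a bounded set containing the origin in $e_n^\perp$.

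Next, run the argument from the proof of Lemma \ref{lem:LogConcaveIntegralDiscreteToContinuous} (the estimate via \eqref{eq:LatticePointAndVolume} and the level sets) in dimension $n-1$ for $h=P_{e_n^\perp}f\in\mathcal{F}(e_n^\perp)$. For $\lambda\geq R+\tfrac12$ this gives
$$
G_{n-1}\bigl(L_t(h_\lambda\star\chi_{P M})\bigr)\leq\lambda^{n-1}\bigl|L_t(h)+B_\infty^{n-1}\bigr|.
$$
This step requires no continuity assumption on $h$, since only inclusions of the form \eqref{eq:InverseInclusionSuperLevelSetsAsplund} with closures are used, and closures have the same measure. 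Integrating against $e^{-t}dt$ yields
$$
\int_{e_n^\perp} P_{e_n^\perp}g_\lambda\,dG_{n-1}\leq \lambda^{n-1}\int_{e_n^\perp}h\star\chi_{B_\infty^{n-1}}(y)\,dy.
$$
The integral on the right is finite because $h\star\chi_{B_\infty^{n-1}}\in\mathcal{F}(e_n^\perp)$.

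\textbf{Step 3: conclusion.} Dividing by $\lambda^n$, the error term is $O(1/\lambda)\to0$. This gives both the general statement and the case $M=\{0\}$.

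The main point to check is Step 2: the domination of $P_{e_n^\perp}(f_\lambda\star\chi_M)$ by an Asplund product of the projected function, and the fact that the $(n-1)$-dimensional counting bound holds uniformly with growth $\lambda^{n-1}$ without assuming continuity of $P_{e_n^\perp}f$. If that uniform bound caused trouble, I would instead bound $L_t(P_{e_n^\perp}g_\lambda)\subseteq P_{e_n^\perp}(\lambda\overline{L_{t+\varepsilon}(f)})+RB_\infty^{n-1}$ directly and use \eqref{eq:LatticePointAndVolume}.
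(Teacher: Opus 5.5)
Your proposal is correct and is essentially the paper's proof. You use the same reduction via \eqref{eq:integralMuandm} and Lemma~\ref{lem:LogConcaveIntegralDiscreteToContinuous} to showing $\int_{e_n^\perp}P_{e_n^\perp}(f_\lambda\star\chi_M)\,dG_{n-1}=o(\lambda^n)$. You also use the same domination of $P_{e_n^\perp}(f_\lambda\star\chi_M)$ by $(P_{e_n^\perp}f)_\lambda\star\chi_{P_{e_n^\perp}M}$; the paper argues with level sets, you argue pointwise. The only other difference is that the paper quotes Lemma~\ref{lem:LogConcaveIntegralDiscreteToContinuous} in dimension $n-1$ to get the exact limit $\int_{e_n^\perp}P_{e_n^\perp}f$, whereas you extract only the uniform $O(\lambda^{n-1})$ bound from its proof, which is all that is needed.

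One caveat, which the paper shares: Step~1 tacitly needs $M$ convex. Otherwise $f_\lambda\star\chi_M$ need not be log-concave, and \eqref{eq:integralMuandm}, which rests on \eqref{eq:MuAndGn} for convex sets, is not available. For a general bounded $M\ni 0$ you can close this with the sandwich $f_\lambda\le f_\lambda\star\chi_M\le f_\lambda\star\chi_{RB_\infty^n}$ and the convex cases $M=\{0\}$ and $M=RB_\infty^n$.
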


\begin{proof}
By \eqref{eq:integralMuandm} and Lemma \ref{lem:LogConcaveIntegralDiscreteToContinuous}, it is enough to see that
$$
\lim_{n\to\infty}\frac{1}{\lambda^n}\int_{e_n^\perp}P_{e_n^\perp}(f_\lambda\star\chi_{M})(y)dG_{n-1}(y)=0.
$$
Notice that, like in the proof of Lemma \ref{lem:LogConcaveIntegralDiscreteToContinuous}, we have by \eqref{eq:InverseInclusionSuperLevelSetsAsplund} that for every $t\geq0$ and every $\lambda>0$
$$
L_t(f_\lambda\star\chi_M)\subseteq \lambda L_t(f)+\overline{M}.
$$
Therefore, since for every $\lambda>0$ we have $P_{e_n^\perp}f_\lambda=(P_{e_n^\perp}f)_\lambda$,  taking into account \eqref{eq:InclusionSuperlevelSetsAsplund}
\begin{eqnarray*}
L_t(P_{e_n^\perp}(f_\lambda\star\chi_M))&= &P_{e_n^\perp}L_t(f_\lambda\star\chi_M)\subseteq \lambda P_{e_n^\perp}L_t(f)+P_{e_n^\perp}\overline{M}= \lambda L_t(P_{e_n^\perp}f)+P_{e_n^\perp}\overline{M}\cr
&=&L_t((P_{e_n^\perp}f)_\lambda)+P_{e_n^\perp}\overline{M}\subseteq L_t((P_{e_n^\perp}f)_\lambda\star\chi_{P_{e_n^\perp}\overline{M}}).
\end{eqnarray*}
Therefore, since  $\Vert P_{e_n^\perp}(f_\lambda\star\chi_M)\Vert_\infty=\Vert f_\lambda\star\chi_M\Vert_\infty=\Vert f_\lambda\Vert_\infty=\Vert f\Vert_\infty$ and
$\Vert(P_{e_n^\perp}f)_\lambda\star\chi_{P_{e_n^\perp}\overline{M}}\Vert_\infty=\Vert (P_{e_n^\perp}f)_\lambda\Vert_\infty=\Vert P_{e_n^\perp}f\Vert_\infty=\Vert f\Vert_\infty$ by \eqref{eq:nu(L)Integral} we obtain that
$$
0\leq\int_{e_n^\perp}P_{e_n^\perp}(f_\lambda\star\chi_M)(y)dG_{n-1}(y)\leq\int_{e_n^\perp}(P_{e_n^\perp}f)_\lambda\star\chi_{P_{e_n^\perp}\overline{M}}(y)dG_{n-1}(y).
$$
Since $P_{e_n^\perp}f\in\mathcal{F}(e_n^\perp)$ and $P_{e_n^\perp}\overline{M}$ is a bounded set containing the origin, by Lemma \ref{lem:LogConcaveIntegralDiscreteToContinuous} we have, identifying $e_n^\perp$ with $\R^{n-1}$, that
$$
\lim_{\lambda\to\infty}\frac{1}{\lambda^{n-1}}\int_{e_n^\perp}(P_{e_n^\perp}f)_\lambda\star\chi_{P_{e_n^\perp}\overline{M}}(y)dG_{n-1}(y)=\int_{e_n^\perp}P_{e_n^\perp}f(y)dy,
$$
which implies that
$$
\lim_{\lambda\to\infty}\frac{1}{\lambda^n}\int_{e_n^\perp}(P_{e_n^\perp}f)_\lambda\star\chi_{P_{e_n^\perp}\overline{M}}(y)dG_{n-1}(y)=0.
$$

\end{proof}
\subsubsection{Non-negative concave functions}

Along the paper, we will also consider non-negative concave functions $h:L\to[0,\infty)$, where $L\subseteq\R^n$ is a convex set. That is, $h$ satisfies that, for any $x_1,x_2\in L$ and any $\lambda\in[0,1]$,
$$
h((1-\lambda)x_1+\lambda x_2)\geq (1-\lambda)h(x_1)+\lambda h(x_2).
$$
Through the paper, $L$ will usually be either a bounded convex set or a set of the form $L(f)$ for some $f\in\mathcal{F}(\R^n)$. Notice that $h$ is concave if and only if $\textrm{hyp}(f)$ is convex.

Defining the function $\overline{h}:\R^n\to[0,\infty)$ given by
$$
\overline{h}(x)=\begin{cases}
h(x) &\textrm{ if }x\in L\\
0 &\textrm{ if }x\not\in L,
\end{cases}
$$
we have that, if $h$ is concave, as a consequence of the arithmetic-geometric mean inequality, for every $x_1,x_2\in\R^n$
$$
\overline{h}((1-\lambda)x_1+\lambda x_2)\geq \overline{h}(x_1)^{1-\lambda}\overline{h}(x_2)^\lambda
$$
and then, $\overline{h}$ is log-concave. Along the paper, we will make the abuse of notation of also denoting by $h$ the function $\overline{h}$. In particular, given a concave function $h:L\to[0,\infty)$ with $L\subseteq\R^n$ a convex set and a convex set $C\subseteq\R^n$, we will consider the function $h^\diamond:L+C\to[0,\infty)$ given by

\begin{equation}\label{eq:DefinitionfDiamond}
h^\diamond(x)=h\star\chi_C(x)=\sup_{u\in C}\overline{h}(x-u)=\sup_{u\in C\atop x-u\in L}h(x-u).
\end{equation}

The function $h^\diamond$ is concave on $L+C$. Given any concave function $h:L\to[0,\infty)$ defined on a convex set $L\subseteq\R^n$, we will denote, for any $s\geq 0$, the set
\begin{equation}\label{eq:DefinitionC_s}
C_s(h):=\{x\in L\,:\, h(x)\geq s\}.
\end{equation}
Notice that if $h$ is bounded, then $C_s(h)=L_{-\log\left(\frac{s}{\Vert h\Vert_\infty}\right)}(h)$ for every $s\in[0,\Vert h\Vert_\infty]$. Notice also that for any convex set $C\subseteq\R^n$, if $h^\diamond$ is defined as in \eqref{eq:DefinitionfDiamond}, we have that $\textrm{hyp}(h^\diamond)\supseteq \textrm{hyp}(h)+C$ and then, for every $s\geq 0$,
\begin{equation}\label{eq:InclusionSuperlevelSetsAsplundConcave}
C_s(h^\diamond)\supseteq C_s(h)+C.
\end{equation}
On the other hand, if $s>0$, notice that for any $x\in C_s(h^\diamond)$ there exists a sequence $(u_n)_{n=1}^\infty\subseteq C$ such that $x-u_n\in L$ and
$$
\lim_{n\to\infty} h(x-u_n)=h^\diamond(x)\geq s.
$$
Moreover, if $C$ is bounded we can assume that $(u_n)_{n=1}^\infty$ converges to some $u\in\overline{C}$. Therefore, for any $\varepsilon>0$ we have that there exists $n_0\in\N$ such that if $n\geq n_0$ then $x-u_n\in C_{s-\varepsilon}(h)$ and, consequently, $x-u\in \overline{C_{s-\varepsilon}(h)}$ for every $\varepsilon>0$. Therefore, if $C$ is bounded, we have that
\begin{equation}\label{eq:InverseInclusionSuperlevelSetsAsplundConcave}
C_s(h^\diamond)\subseteq \bigcap_{\varepsilon>0}\overline{C_{s-\varepsilon}(h)}+\overline{C}.
\end{equation}

\subsection{The covariogram function}

Given a convex body $K\subseteq\R^n$, its covariogram function is the function $g_K:\R^n\to[0,\infty)$, supported on $K-K$, which is defined as
$$
g_K(x):=|K\cap(x+K)|=\int_{\R^n}\chi_K(z)\chi_{K}(z-x)dz=\int_{\R^n}\min\left\{\chi_K(z),\chi_{K}(z-x)\right\}dz.
$$

The function $g_K$ satisfies that $g_K^\frac{1}{n}$ is concave on $K-K$, as a consequence of Brunn-Minkowski inequality, and therefore $g_K$ is log-concave on $\R^n$. Besides, $g_K$ is an even function that satisfies $\Vert g_K\Vert_\infty=g(0)=|K|$ and
\begin{equation}\label{eq:IntegralCovariogram}
\int_{\R^n} g_K(x)=\int_{\R^n}\int_{\R^n}\min\left\{\chi_K(x),\chi_{K}(y)\right\}dxdy=|K|^2.
\end{equation}

Given $f\in\mathcal{F}(\R^n)$, its covariogram function $g_f:\R^n\to[0,\infty)$ is the function defined as
$$
g_f(x):=\int_{\R^n}\min\left\{\frac{f(z)}{\Vert f\Vert_\infty},\frac{f(z-x)}{\Vert f\Vert_\infty}\right\}dz.
$$
This function satisfies (see \cite[Lem. 2.2.1]{ABG1}) that $g_f\in\mathcal{F}(\R^n)$ and
\begin{eqnarray}\label{eq:Covariogram}
g_f(x)&=&\int_0^\infty e^{-t}g_{L_t(f)}(x)dt=\overline{\nu}(L(f)\cap(x+ L(f)).
\end{eqnarray}
Moreover, $g_f$ is an even function,
$$
\Vert g_f\Vert_\infty=g_f(0)=\overline{\nu}(L(f))=\int_{\R^n}\frac{f(y)}{\Vert f\Vert_\infty}dy
$$
and
\begin{equation}\label{eq:IntegralFunctionalCovariogram}
\int_{\R^n}g_f(x)dx=\int_{\R^n}\int_{\R^n}\min\left\{\frac{f(x)}{\Vert f\Vert_\infty},\frac{f(y)}{\Vert f\Vert_\infty}\right\}dydx.
\end{equation}
Let us point out that if $f=\chi_K$ for some convex body $K\subseteq\R^n$, then $g_f=g_K$.

\subsection{Ball's bodies of log-concave functions}

In \cite{Ba}, Ball introduced, for any measurable function $g:\R^n\to[0,\infty)$, such that $g(0)>0$, and for any $p>0$, the set
$$
K_p(g):=\left\{x\in\R^n\,:\,p\int_0^\infty r^{p-1}g(rx)dr\geq g(0)\right\}.
$$
These sets satisfy that, for any $p>0$, they are star sets with center $0$ and radial function given, for any $\theta\in S^{n-1}$
$$
\rho_{K_p(g)}(\theta)=\left(\frac{p}{g(0)}\int_0^\infty r^{p-1}g(r\theta)dr\right)^\frac{1}{p}.
$$
The importance of these sets $(K_p(g))_{p>0}$, which we will call $p$-th Ball bodies of $g$, in the study of log-concave functions relies on the following two facts: First, whenever $g:\R^n\to[0,\infty)$ is an integrable log-concave function such that $g(0)>0$, the star set $K_p(g)$ is a convex body for any $p>0$ (see \cite[Thm. 2.5.5, Lem. 2.5.6, and Prop. 2.5.7]{BGVV}). As a particular case, notice that if $K\subseteq\R^n$ is a convex body with $0\in K$, then for any $p>0$
$$
K_p(\chi_K)=K\quad\textrm{and}\quad K_p\left(e^{-\Vert\cdot\Vert_K}\right)=\frac{K}{\Gamma\left(1+p\right)^{1/p}}.
$$

Second, for any homogeneous function $h:\R^n\to[0,\infty)$ of degree 1 and any $p>-n$ we have, by integration in polar coordinates (see \cite[Prop. 2.5.3]{BGVV} for the particular case when $h$ is a norm on $\R^n$), that
$$
\int_{K_{n+p}(g)}h^p(x)dx=\int_{\R^n}h^p(x)\frac{g(x)}{g(0)}dx.
$$

In particular, if $g:\R^n\to[0,\infty)$ is an integrable log-concave function, such that $g(0)>0$, taking $p=0$ (see \cite[Lem. 2.5.6]{BGVV}) we obtain that  $\displaystyle{|K_n(g)|=\int_{\R^n}\frac{g(x)}{g(0)}dx}$.

If $K\subseteq\R^n$ is convex body, taking $g=g_K$ its covariogram function, then the $n$-th Ball body of $K$, $K_n(g_K)$, is a convex body whose volume is, by \eqref{eq:IntegralCovariogram},
$$
|K_n(g_K)|=\int_{\R^n}\frac{g_K(x)}{g_K(0)}dx=|K|
$$
and, taking volumes in \eqref{eq:InclusionZhang}, Zhang's inequality, \eqref{eq:Zhang}, is obtained.

If $f\in\mathcal{F}(\R^n)$, taking $g=g_f$ its covariogram function, then the $n$-th Ball body of $K$, $K_n(g_K)$, is a convex body whose volume is, by \eqref{eq:IntegralFunctionalCovariogram}
$$
|K_n(g_f)|=\int_{\R^n}\frac{g_f(x)}{g_K(0)}dx=\frac{1}{\Vert f\Vert_1}\int_{\R^n}\int_{\R^n}\min\left\{f(x),f(y)\right\}dydx
$$
and, taking volumes in \eqref{eq:InclusionFunctionalZhang}, the functional version of Zhang's inequality, \eqref{eq:FunctionalZhang} is obtained.

For any integrable log-concave function $g:\R^n\to[0,\infty)$ such that $g(0)>0$ the following inclusion relation between Ball bodies holds (see \cite[Prop. 2.5.7]{BGVV}): If $0<p<q$, then
$$
\frac{1}{\Gamma (1+q)^\frac{1}{q}} K_q (g) \subseteq  \frac{1}{\Gamma (1+p)^\frac{1}{p}}K_p (g),
$$
which follows from the fact that (see \cite[Thm.2.2.3]{BGVV}), for any log-concave function $g:[0,\infty)\to[0,\infty)$ with $g(0)>0$  and any $0<p<q$, we have
\begin{equation}\label{eq:InequalityInclusionBallsBodies}
\left( \frac{1}{\Gamma(1+q) g(0)}  \int_0^{\infty}qs^{q-1}g(s) ds \right)^{1/q} \leq \left( \frac{1}{\Gamma(1+p) g(0) }  \int_0^{\infty} ps^{p-1}g(s) ds \right)^{1/p}.
\end{equation}

We refer the reader to \cite[Sect. 2.5]{BGVV} for more information on the family of $p$-th Ball bodies.

\subsection{Steiner's symmetrization}\label{sec:Steiner}

Given a bounded convex set $K\subseteq\R^n$, the Steiner symmetrization of $K$ with respect to the hyperplane $e_n^\perp$ is defined as
$$
S_{e_n}(K)=\left\{y+\frac{t_1-t_2}{2}e_n\,:\,y\in P_{e_n^\perp}(K),\,y+t_1e_n\in K,\,y+t_2e_n\in K\right\}.
$$
That is, $S_{e_n}(K)$ is the set that we obtain by first, shifting all the segments given by $K\cap(y+\langle e_n\rangle)$ in the direction parallel to $\langle e_n\rangle$ until their centers lie in the hyperplane $e_n^\perp$, and second, leaving such segments closed if they were closed and open otherwise. If $K$ is compact then $S_{e_n}(K)$ can be written as
$$
S_{e_n}(K)=\left\{(y,t)\in\R^{n-1}\times\R\,:\,y\in P_{e_n^\perp}(K),\,|t|\leq\frac{|K\cap(y+\langle e_n\rangle)|_1}{2}\right\}.
$$
The Steiner symmetrization preserves convexity and volume. Moreover, for every $y\in P_{e_n^\perp}(K)$ we have that $S_{e_n}(K)\cap(y+\langle e_n\rangle)$ is an interval centered at $y$ which has the same length as $K\cap(y+\langle e_n\rangle)$. Besides, from the definition of $S_{e_n}(K)$, if $K\subseteq\R^n$ is a convex body then
$$
S_{e_n}(K)\cap\{x\in\R^n\,:\,\langle x,e_n\rangle\geq0\}
$$
is the hypograph of the function $f:P_{e_n^\perp}(K)\to[0,\infty)$ given by $$f(y)=\frac{\vol_1(K\cap(y+\langle e_n\rangle))}{2},$$ which is concave by Brunn's principle (see, for instance \cite[Thm. 1.2.2]{BGVV}). It is also known that for any bounded convex set $K$ and any $\lambda\geq0$ we have that  $S_{e_n}(\lambda K)=\lambda S_{e_n}(K)$ and that for any two bounded convex sets $K,L\subseteq\R^n$ one has that
\begin{equation}\label{eq:SteinerSymmetrizationInclusion}
S_{e_n}(K)+S_{e_n}(L)\subseteq S_{e_n}(K+L).
\end{equation}
A list of basic properties of the Steiner symmetrization of convex bodies can be found in \cite[Sect. 1.1.7 and A.5]{AGM}.

Given a log-concave integrable function $f\in\mathcal{F}(\R^n)$, the Steiner symmetrization of $L(f)\subseteq\R^n\times[0,\infty)$ with respect $e_n^\perp$ is defined as
$$
S_{e_n}(L(f))=\{(x,t)\in\R^n\times[0,\infty)\,:\, x\in S_{e_n}(L_t(f))\}.
$$

Notice that since $L(f)$ is convex, $S_{e_n}(L(f))$ is a convex set and that if $(x,t_0)\in S_{e_n}(L(f))$, then $(x,t)\in S_{e_n}(L(f))$ for every $t\geq t_0$. The Steiner symmetrization of $f$ with respect $e_n^\perp$ is then defined as
$$
S_{e_n}(f)(x)=\Vert f\Vert_\infty e^{-\tilde{u}(x)},
$$
where $\tilde{u}:\R^n\to[0,\infty]$ is the convex function given by
$$
\tilde{u}(x)=\inf\{t\geq0\,:\, (x,t)\in S_{e_n}(L(f))\},
$$
understanding the infimum as $\infty$ if $\{t\geq0\,:\, (x,t)\in S_{e_n}(L(f))\}=\emptyset$. Notice that $\Vert S_{e_n}(f)\Vert_\infty=\Vert{f}\Vert_{\infty}$ and that, even though $S_{e_n}(L(f))\subseteq L(S_{e_n}(f))$ and the inclusion can be strict, for every $(y,t)\in e_n^\perp\times[0,\infty)\subseteq\R^{n+1}$ we have that
$$
|L(S_{e_n}(f))\cap ((y,t)+\langle e_n\rangle)|_1=|S_{e_n}(L(f))\cap ((y,t)+\langle e_n\rangle)|_1=|L_t(f)\cap (y+\langle e_n\rangle)|_1.
$$

\subsection{Berwald's inequality}\label{sec:Berwald}

Berwald's inequality provides a reverse H\"older's inequality for $L^p$-norms of concave functions on convex bodies. It is stated in the following Theorem:

\begin{thm}[Berwald's inequality]\label{thm:Berwald}
Let $K\subseteq\R^n$ be a convex body and let $h:K\to[0,\infty)$ be a concave function. Then, for any $-1<p\leq q$ we have that
$$
\left(\frac{{{n+q}\choose{n}}}{|K|}\int_K h^q(x)dx\right)^{1/q}\leq\left(\frac{{{n+p}\choose{n}}}{|K|}\int_K h^p(x)dx\right)^{1/p}.
$$
\end{thm}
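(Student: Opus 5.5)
The statement to prove is Berwald's inequality (Theorem \ref{thm:Berwald}), a classical result. I would give the standard proof via the distribution function of $h$ and a comparison with a ``cone'' profile. The key input is Brunn--Minkowski. Write $\phi(s)=|C_s(h)|$ for $s\geq0$, where $C_s(h)$ is the superlevel set defined in \eqref{eq:DefinitionC_s}. Concavity of $h$ gives, for $s_1,s_2\geq0$ and $\lambda\in[0,1]$,
$$
C_{(1-\lambda)s_1+\lambda s_2}(h)\supseteq(1-\lambda)C_{s_1}(h)+\lambda C_{s_2}(h).
$$
By Brunn--Minkowski, $\phi^{1/n}$ is therefore concave on $[0,\Vert h\Vert_\infty)$. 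We have $\phi(0)=|K|$, and by the layer cake formula
$$
\int_K h^p=\int_0^\infty ps^{p-1}\phi(s)\,ds\qquad\textrm{for }p>0.
$$
The cases $-1<p\leq0$ are handled by the analogous formula, with the usual interpretation at $p=0$ (geometric mean, obtained as a limit). Normalize $|K|=1$ and set $\psi(s)=\phi(s)^{1/n}$, which is concave and nonincreasing with $\psi(0)=1$.

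Next, fix $p$ and choose $a>0$ so that the comparison function $\psi_0(s)=(1-s/a)_+$ gives the same $p$-moment:
$$
\int_0^\infty ps^{p-1}\psi_0(s)^n\,ds=\int_0^\infty ps^{p-1}\phi(s)\,ds.
$$
For $\psi_0$ the moments are explicit through Beta functions:
$$
\int_0^a ps^{p-1}(1-s/a)^n\,ds=a^p\,\frac{\Gamma(p+1)\Gamma(n+1)}{\Gamma(n+p+1)}=\frac{a^p}{\binom{n+p}{n}}.
$$
Hence $\binom{n+p}{n}\int h^p=a^p$ in the equality case. The target inequality then reduces to showing that the $q$-moment of $\phi$ is at most that of $\psi_0^n$, since then
$$
\Bigl(\binom{n+q}{n}\int h^q\Bigr)^{1/q}\leq a=\Bigl(\binom{n+p}{n}\int h^p\Bigr)^{1/p}.
$$

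For the key comparison, concavity of $\psi$ together with $\psi(0)=\psi_0(0)=1$ and $\psi_0$ being affine on its support forces a single crossing. There is $s_0$ with $\psi\leq\psi_0$ on $[0,s_0]$ and $\psi\geq\psi_0$ on $[s_0,\infty)$. The crossing must exist, or the $p$-moments could not agree. Thus $\phi-\psi_0^n$ changes sign once, from $\leq0$ to $\geq0$. Equal $p$-moments mean $\int s^{p-1}(\phi-\psi_0^n)\,ds=0$. Multiplying the integrand by the increasing weight $s^{q-p}$ and comparing with $s_0^{q-p}$ gives
$$
\int s^{q-1}(\phi-\psi_0^n)\,ds\geq s_0^{q-p}\int s^{p-1}(\phi-\psi_0^n)\,ds=0.
$$
This goes in the wrong direction, which signals that I have the crossing orientation reversed. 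I will recheck it. Since $\psi$ is concave and $\psi_0$ is linear with the same value at $0$, the difference $\psi-\psi_0$ is concave on $[0,a]$ and vanishes at $0$. So it is first $\geq0$ and then $\leq0$; moreover $\psi\geq0=\psi_0$ beyond $a$ only if $\psi$ stays positive there. The main obstacle is handling this orientation correctly, together with the region $s>a$. I would handle both by choosing $a$ as the point where the tangent-line bound $\psi(s)\leq 1-s/a'$ does not hold globally. Alternatively, and more cleanly, I would use the standard approach: show that $\psi(s)\geq(1-s/a)$ on $[0,a]$ and $\psi(s)=0$ past some point, which makes $\phi-\psi_0^n$ change sign from $\geq0$ to $\leq0$. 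Then the increasing weight $s^{q-p}$ yields
$$
\int s^{q-1}(\phi-\psi_0^n)\,ds\leq0,
$$
which is exactly what is needed.

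The remaining cases are $p\leq0$ and the limiting case $p=0$. For these I would replace the moment with $\int_0^\infty ps^{p-1}(\phi(s)-\phi(0))\,ds$, the standard regularization. I would then run the same single-crossing argument, since the sign-change lemma only uses monotonicity of the weight $s^{q-p}$.
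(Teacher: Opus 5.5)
\textbf{Verdict: the overall strategy is right, but there is a genuine gap in the key comparison step.}

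The paper does not prove Theorem \ref{thm:Berwald}; it quotes it from Berwald and from Gardner--Zhang. The remark after Lemma \ref{lem:LemmaBerwald} notes that Berwald's original proof goes through the one-dimensional inequality for $s\mapsto|C_s(h)|^{1/n}$, and your skeleton is that classical argument:
\begin{itemize}
\item Brunn--Minkowski gives concavity of $\psi=\phi^{1/n}$;
\item you match the $p$-moment with the cone profile $\psi_0=(1-s/a)_+$;
\item you use the Beta identity;
\item you apply the increasing weight $s^{q-p}$.
\end{itemize}
These ingredients are fine. The gap is in the step that carries the whole proof: the sign pattern of $\phi-\psi_0^n$. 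Your first attempt has the orientation reversed, as you noticed. Your ``clean'' replacement is false: it is not true that $\psi(s)\ge 1-s/a$ on all of $[0,a]$. Take $n=1$, $K=[0,1]$, $h\equiv 1$, $p=1$. The moment matching gives $a=2$, but $\psi=0<1-s/2$ on $(1,2)$. Indeed, that claim together with $\psi\ge 0=\psi_0$ beyond $a$ would give $\phi\ge\psi_0^n$ everywhere, hence $\phi=\psi_0^n$ a.e., which is only the equality case. Choosing $a$ through a tangent line does not help either, since $a$ is already fixed by the moment condition. So the region $s>a$, which you yourself call the main obstacle, is left untreated, and $\int s^{q-1}(\phi-\psi_0^n)\,ds\le 0$ is not actually proved.

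\textbf{The missing argument.} Normalize $|K|=1$, set $M=\Vert h\Vert_\infty$, and let $\ell(s)=1-s/a$ for all $s\in\R$. Concavity of $\psi$ with $\psi(0)=1$ gives $\psi(s)\ge 1-s/M$ on $[0,M)$. The function $g=\psi-\ell$ is concave on $[0,M)$ with $g(0)=0$, so $\{g\ge 0\}$ is an initial segment of $[0,M)$.
\begin{itemize}
\item Suppose $M>a$ and pick $s'\in(a,M)$. Then $\psi(s')>0>\ell(s')$, so $g(s')>0$ and hence $g\ge 0$ on $[0,s']$.
\item It follows that $\phi\ge\psi_0^n$ on $[0,\infty)$, with strict inequality on $(a,s']$. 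This contradicts $\int_0^\infty s^{p-1}(\phi-\psi_0^n)\,ds=0$.
\item Hence $M\le a$. Then $\phi-\psi_0^n$ is $\ge 0$ on some $[0,s_0]$ and $\le 0$ on $[s_0,\infty)$, up to a null set.
\end{itemize}
This is exactly what your weight argument needs:
$\int s^{q-1}(\phi-\psi_0^n)\,ds\le s_0^{q-p}\int s^{p-1}(\phi-\psi_0^n)\,ds=0$.

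\textbf{The range $-1<p\le 0$.} The same scheme works there, but three points should be checked rather than asserted.
\begin{itemize}
\item The regularized formula $\int_K h^p=\int_0^\infty ps^{p-1}(\phi(s)-\phi(0))\,ds$ converges near $0$ because $\phi(0)-\phi(s)=O(s)$, by the bound on $\psi$ above.
\item The identity $\int_0^\infty ps^{p-1}(\psi_0^n-1)\,ds=a^p/\binom{n+p}{n}$ still holds for $-1<p<0$; integrate by parts to see this.
\item When $q<0$, the map $t\mapsto t^{1/q}$ reverses inequalities, and dividing the matching condition by $p<0$ gives $\int s^{p-1}(\phi-\psi_0^n)\,ds=0$. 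What is required is again precisely $\int s^{q-1}(\phi-\psi_0^n)\,ds\le 0$, so the crossing argument, including the step $M\le a$, goes through unchanged.
\end{itemize}
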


Berwald's inequality was proved in \cite[Satz 7]{Be} whenever the parameters in the statement satisfy $0<p\leq q$ (see also \cite[Thm. 7.2]{AAGJV} for an English translation). It was extended to the whole range $-1<p\leq q$ in \cite[Thm. 5.1]{GZ}.

With the use of the discrete version of the Brunn-Minkowski inequality, Theorem \ref{thm: BM_lattice_point_no_G(K)G(L)>0}, an analogue of Theorem \ref{thm:Berwald} (in the range $0<p\leq q$) was proved in \cite[Thm. 1.4]{ALY}, under the condition that the concave function attains its maximum at 0:

\begin{thm}\label{thm:BerwaldDiscreteNew}
Let $K\subseteq\R^n$ be a convex body containing the origin and let $h:K\to[0,\infty)$ be a concave function. Then, for any $0<p\leq q$ we have that
$$
\left(\frac{{{n+q}\choose{n}}}{G_n(K)}\int_K h^q(x)dG_n(x)\right)^{1/q}\leq\left(\frac{{{n+p}\choose{n}}}{G_n(K)}\int_{K+C_n} (h\star\chi_{C_n})^p(x)dG_n(x)\right)^{1/p}.
$$
\end{thm}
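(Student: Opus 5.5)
We need to prove the discrete Berwald inequality (Theorem \ref{thm:BerwaldDiscreteNew}) for a concave $h$ on a convex body $K\ni 0$, $0<p\le q$, without assuming the maximum is at the origin. The introduction announces that the discrete and continuous versions are equivalent, so the plan is to deduce the discrete inequality from the continuous Theorem \ref{thm:Berwald}. The idea is to apply it not to $h$ on $K$ but to a concave function on the set $K+\frac12 B_\infty^n$, or on a modified body, built so that its $L^q$ norm dominates the lattice sum and its $L^p$ norm is dominated by the lattice sum of $h\star\chi_{C_n}$.

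\textbf{Step 1 (upper side).} Let $h_1=h\star\chi_{\frac12 B_\infty^n}$, which is concave on $K_1:=K+\frac12B_\infty^n$. For each $x\in K\cap\Z^n$ we have $h_1\ge h(x)$ on the unit cube $x+\frac12B_\infty^n$, and these cubes have disjoint interiors. Hence
$$\int_K h^q\,dG_n\le\int_{K_1}h_1^q(x)\,dx,$$
and likewise $G_n(K)\le |K_1|$ by \eqref{eq:LatticePointAndVolume}. This does not directly fit Berwald's normalization, because the inequality involves the ratio $1/|K_1|$. So I would instead use the level-set formulation: Berwald's inequality is equivalent to the statement that $\phi(s)=|C_s(h)|$, normalized, is dominated in the sense of distributions by the extremal cone profile $(1-s/\|h\|_\infty)^n$. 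Concretely, I would use the Brunn--Minkowski form directly.

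\textbf{Step 2 (revised, following \cite{ALY}).} Set $\psi(s)=G_n(C_s(h))$. Since $\int h^q\,dG_n=\int_0^\infty qs^{q-1}\psi(s)\,ds$ and $\int (h\star\chi_{C_n})^p\,dG_n\ge \int_0^\infty ps^{p-1}G_n(C_s(h)+C_n)\,ds$ by \eqref{eq:InclusionSuperlevelSetsAsplundConcave}, the result follows from the one-dimensional lemma behind Berwald's inequality. That lemma says that if $\Phi$ is nonincreasing, $\Phi(0)\ge\psi(0)$-type normalization holds, and $\Phi^{1/n}$ lies above the linear function $\psi(0)^{1/n}(1-s/M)$ while $\psi$ lies below it, then the moment comparison holds. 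The needed concavity input comes from Theorem \ref{thm: BM_lattice_point_no_G(K)G(L)>0}. Convexity of $h$'s hypograph gives $(1-\lambda)C_0(h)+\lambda C_s(h)\subseteq C_{\lambda s}(h)$, and here no point of maximum is needed: $C_0(h)=K$ contains everything, which is where the old hypothesis $h$ maximal at $0$ gets removed. Brunn--Minkowski then yields
$$G_n(C_{\lambda s}(h)+C_n)^{1/n}\ge(1-\lambda)G_n(K)^{1/n}+\lambda\,G_n(C_s(h))^{1/n}.$$
Taking $s=M=\|h\|_\infty$ (or $s\to M$ with $C_s\neq\emptyset$, valid even if $G_n=0$ by the Remark), we get $G_n(C_t(h)+C_n)\ge G_n(K)(1-t/M)^n$. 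With this bound on the $p$-side and $\psi(s)\le G_n(K)$ with $\psi$ vanishing beyond $M$ on the $q$-side, we compare both sides with the cone profile $G_n(K)(1-s/M)^n$, whose moments give exactly $\binom{n+r}{n}^{-1}M^r G_n(K)$.

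\textbf{Step 3 and main obstacle.} The $p$-side is now $\ge M^p G_n(K)\binom{n+p}{n}^{-1}$. But the $q$-side need not be $\le$ the cone value; $\psi$ can be larger than the cone profile. This is the real difficulty, and it is handled by the standard crossing argument. Set $\tilde M$ so that the $p$-moments of $G_n(K)(1-s/\tilde M)^n$ and of $\Phi(s):=G_n(C_s(h)+C_n)$ agree; then $\tilde M\ge M$. Using the generalized Brunn--Minkowski bound with general $s$, which gives $\Phi(\lambda s)^{1/n}\ge (1-\lambda)G_n(K)^{1/n}+\lambda\psi(s)^{1/n}$, show that $\psi$ crosses the cone profile at most once, from below. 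Then the usual integration by parts yields that the $q$-moment of $\psi$ is at most the $q$-moment of the cone with parameter $\tilde M$, which equals the normalized $p$-moment of $\Phi$. Making this single-crossing argument work with the mismatched functions $\psi$ and $\Phi$ is the step I expect to require the most care. The remaining algebra gives exactly the stated constants.
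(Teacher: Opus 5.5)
Your Steps 1--2 are sound as far as they go. The inclusion $(1-\lambda)K+\lambda C_s(h)\subseteq C_{\lambda s}(h)$ (using $C_0(h)=K$), the resulting bound $\Phi(\lambda s)^{1/n}\geq(1-\lambda)G_n(K)^{1/n}+\lambda\psi(s)^{1/n}$, and the choice of $\tilde M$ are all correct. The gap is the key claim of Step 3, that $\psi$ crosses the cone $c(s)=G_n(K)(1-s/\tilde M)_+^n$ at most once. The classical single-crossing argument relies on the concavity of $|C_s(h)|^{1/n}$. Here, however, $\psi(s)=G_n(C_s(h))$ is an integer-valued nonincreasing step function, constant between consecutive values $h(x)$ with $x\in K\cap\Z^n$, while $c$ is continuous and strictly decreasing on $[0,\tilde M]$. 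So $\psi-c$ strictly increases on each flat piece and drops at each jump. Around the place where $\psi$ falls below $c$ it can therefore change sign several times. Your Brunn--Minkowski bound cannot exclude this, because its left-hand side controls $\Phi$, not $\psi$. Note also that the crossing you need is from above (first $\geq$, then $\leq$); a crossing from below would give the reverse inequality.

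What your bound does give is the following. If $\psi(s_1)>c(s_1)$ and $s=\lambda s_1$ with $\lambda\in[0,1]$, then
\[
\Phi(s)^{1/n}\geq(1-\lambda)G_n(K)^{1/n}+\lambda c(s_1)^{1/n}\geq G_n(K)^{1/n}\left(1-\frac{s}{\tilde M}\right),
\]
so $\Phi\geq c$ on $[0,s_1]$. Let $S=\{s:\psi(s)>c(s)\}$. If $S$ is empty, then $\psi\leq c$ and you are done. Otherwise set $s^*=\sup S$, and let $F$ equal $\Phi$ on $[0,s^*)$ and $\psi$ on $[s^*,\infty)$. Then $\psi\leq F\leq\Phi$, with $F\geq c$ on $[0,s^*)$ and $F\leq c$ on $(s^*,\infty)$, and hence
\[
\int_0^\infty s^{q-1}(F-c)\,ds\leq (s^*)^{q-p}\int_0^\infty s^{p-1}(F-c)\,ds\leq (s^*)^{q-p}\int_0^\infty s^{p-1}(\Phi-c)\,ds=0 .
\]
This gives $\int_K h^q\,dG_n\leq G_n(K)\tilde M^q/\binom{n+q}{n}$ and finishes the proof. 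The missing idea is that the crossing argument must be run on a function sandwiched between $\psi$ and $\Phi$, not on $\psi$ itself.

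The paper resolves the same mismatch differently. It uses the two-parameter inequality $G_n(C_{(1-\lambda)s_0+\lambda s_1}(h\star\chi_{C_n}))^{1/n}\geq(1-\lambda)\psi(s_0)^{1/n}+\lambda\psi(s_1)^{1/n}$ to place the least concave majorant $f$ of $\psi^{1/n}$ in between: $\psi\leq f^n\leq G_n(C_s(h\star\chi_{C_n}))$, with $f(0)^n=G_n(K)$. It then applies Lemma \ref{lem:LemmaBerwald}, which it derives from the continuous Berwald inequality; this is how it obtains the equivalence of the two versions. The repaired version of your route is instead self-contained and needs only the case $s_0=0$ of discrete Brunn--Minkowski.
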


Let us point out that, in \cite[Thms. 1.4 and 4.5]{ALY}, Theorem \ref{thm:BerwaldDiscreteNew} was proved under the additional assumption that the function $h$ attains its maximum at $0$, showing also that it implies the continuous version of Berwald's inequality, Theorem \ref{thm:Berwald}, in the range $0<p\leq q$, since in the continuous version we can assume without loss of generality that the concave function attains its maximum at 0. In this paper (see Section \ref{sec:Discrete Berwald's inequalities}) we will provide a much simpler proof of Theorem \ref{thm:BerwaldDiscreteNew} without assuming that $h$ attains its maximum at $0$, obtaining it as a consequence of Theorem \ref{thm:Berwald}. Thus, both Theorems \ref{thm:Berwald} and \ref{thm:BerwaldDiscreteNew}, are equivalent in the range $0<p\leq q$.

Let us also point out that the inclusion relation \eqref{eq:InclusionZhangRadialFunctions2} was obtained in \cite{GZ} by applying Berwald's inequality to the concave function $h: K\to[0,\infty)$ given by $h(x)=|K\cap\{x+\lambda e_n\,:\,\lambda\geq0\}|_1$ with parameters $p\to(-1)^+$ and $q=n$. The same inclusion relation was obtained in \cite[Sect. 3]{ALM} by applying Berwald's inequality to the concave function $h_1:P_{e_n^\perp}(K)\to[0,\infty)$ given by $h_1(y)=|K\cap(y+\langle e_n\rangle)|_1$ with only positive parameters $p=1$ and $q=n+1$. In the same paper \cite{ALM}, \eqref{eq:InequalityDiscreteZhang} was proved  by applying the discrete version of Berwald's inequality, Theorem \ref{thm:BerwaldDiscreteNew}, to the same function $h_1$.

In the setting of log-concave functions, the following version of Berwald's inequality was obtained in \cite[Lem. 3.3]{AAGJV} and \cite[Thm. 1.1]{ABG2}:

\begin{thm}\label{thm:FuncBerwald>-1}
Let $f\in\mathcal F(\mathbb R^n)$ and let $h:L(f)\to[0,\infty)$ be a concave function,
where $L(f)=\{(x,t)\in\R^{n+1}\,:\,f(x)\geq e^{-t}\Vert f\Vert_\infty\}$. Let $d\overline{\nu}$ be the measure defined by $d\overline{\nu}(x,t)=d\overline{\nu}_{n+1}(x,t)=dm_n(x)\otimes e^{-t}dt$. Then, for any $-1<p\leq q$,
	$$
	\left(\frac{1}{\Gamma(1+q)\overline{\nu}(L(f))}\int_{L(f)} h^q(x,t)d\overline{\nu}(x,t)\right)^{1/q}\leq\left(\frac{1}{\Gamma(1+p)\overline{\nu}(L(f))}\int_{L(f)} h^p(x,t)d\overline{\nu}(x,t)\right)^{1/p}.
	$$
\end{thm}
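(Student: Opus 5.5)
We would reduce the statement to the one-dimensional inequality \eqref{eq:InequalityInclusionBallsBodies}, applied to the distribution function of $h$ with respect to $\overline{\nu}$.

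\emph{Distribution function.} For $s\geq0$ let
$$
\varphi(s):=\overline{\nu}\left(C_s(h)\right)=\overline{\nu}\left(\{(x,t)\in L(f)\,:\,h(x,t)\geq s\}\right),
$$
where $C_s(h)$ is the superlevel set \eqref{eq:DefinitionC_s}, now taken inside $L(f)\subseteq\R^{n+1}$. Since $h\geq0$ on $L(f)$, we have $C_0(h)=L(f)$, so $\varphi(0)=\overline{\nu}(L(f))\in(0,\infty)$ by \eqref{eq:barnu(L)Integral}. By Fubini, for $q>0$,
$$
\int_{L(f)}h^q\,d\overline{\nu}=\int_0^\infty qs^{q-1}\varphi(s)\,ds,
$$
so the left-hand side of the theorem is exactly the left-hand side of \eqref{eq:InequalityInclusionBallsBodies} with $g=\varphi$.

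\emph{Log-concavity of $\varphi$.} Concavity of $h$ gives, for $s_0,s_1\geq0$ and $\lambda\in[0,1]$,
$$
(1-\lambda)C_{s_0}(h)+\lambda C_{s_1}(h)\subseteq C_{(1-\lambda)s_0+\lambda s_1}(h),
$$
and each $C_s(h)$ is convex. The measure $\overline{\nu}$ has density $e^{-t}\chi_{\{t\geq0\}}$ with respect to Lebesgue measure on $\R^{n+1}$, and this density is log-concave. By the Prékopa--Leindler inequality (Borell's theorem on log-concave measures), $\overline{\nu}$ is a log-concave measure. Therefore $\varphi$ is log-concave on $[0,\infty)$ with $\varphi(0)>0$.

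\emph{Conclusion for $0<p\leq q$.} Applying \eqref{eq:InequalityInclusionBallsBodies} to $g=\varphi$ turns both sides into the expressions in the statement, because $g(0)=\varphi(0)=\overline{\nu}(L(f))$. The case $p=q$ is trivial. Integrability of $h^q$ is automatic here: $\varphi$ decays exponentially if it does not vanish identically from some point on.

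\emph{The range $-1<p\leq0$.} This is where we expect the main obstacle, since \eqref{eq:InequalityInclusionBallsBodies} is quoted only for positive exponents. For $-1<p<0$ we would use the representation
$$
\int_{L(f)}h^p\,d\overline{\nu}=\int_0^\infty |p|\,s^{p-1}\bigl(\varphi(0)-\varphi(s)\bigr)\,ds+\frac{\varphi(0)}{\|h\|_\infty^{-p}},
$$
with the last term read as $0$ if $h$ is unbounded. Equivalently, $\frac{1}{\Gamma(1+p)}\int_0^\infty ps^{p-1}(\varphi(s)-\varphi(0))\,ds$ extends the positive-$p$ moment functional analytically to $p>-1$.

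We would then rerun the standard one-crossing argument behind \eqref{eq:InequalityInclusionBallsBodies}:
\begin{itemize}
\item normalise $\varphi(0)=1$ and pick $\alpha>0$ so that the $q$-moment of $\varphi$ equals the $q$-moment of $e^{-s/\alpha}$;
\item log-concavity forces $\varphi-e^{-s/\alpha}$ to change sign exactly once, from $\geq0$ to $\leq0$;
\item integrating this sign pattern against $s^{p-1}$ versus $s^{q-1}$ yields the comparison of moments, and it remains valid for negative $p$ once the moments are written via $\varphi(s)-\varphi(0)$.
\end{itemize}
The delicate points are the convergence near $s=0$, which needs $p>-1$, and the bookkeeping of the sign change. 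Finally, $p=0$ is obtained by letting $p\to0$, where both sides converge to the corresponding geometric means.
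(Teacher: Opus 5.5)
Your proof is correct. The paper does not prove this statement itself: it quotes it from \cite{AAGJV} (for $0<p\leq q$) and \cite{ABG2} (for $-1<p\leq q$).

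The closest argument in the paper is the proof of Theorem \ref{thm:DiscreteFunctionalBerwald}, together with the subsection showing that it implies the continuous inequality for $0<p\leq q$. The core mechanism there is the same as yours: log-concavity of the distribution function of $h$, followed by \eqref{eq:InequalityInclusionBallsBodies}. The difference is that the paper runs it on the lattice:
\begin{itemize}
\item Lemma \ref{lem:LogConcFunctionalDiscreteBerwald} only yields the mixed inequality $I_{h^\diamond}((1-\lambda)s_0+\lambda s_1)\geq I_h(s_0)^{1-\lambda}I_h(s_1)^{\lambda}$, obtained by discrete Brunn--Minkowski on each $t$-slice and then Pr\'ekopa--Leindler in $t$.
\item So a log-concave function has to be squeezed between $I_h$ and $I_{h^\diamond}$.
\item The continuous statement is then recovered by dilating $f$ and $h$, letting $\lambda\to\infty$, and removing the $\varepsilon C_n$-fattening by dominated and monotone convergence, after reducing to closed $L(f)$ and continuous $h$.
\end{itemize}
Working directly with $\overline{\nu}$, Borell's theorem gives log-concavity of $\varphi$ at once, and none of this approximation is needed. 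What the paper's detour buys is the discrete inequality and the implication from discrete to continuous, not a shorter proof of the continuous statement.

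For $-1<p\leq0$ your sketch also works, and the paper offers no argument for this range. Normalise $\varphi(0)=1$ and set $F_\varphi(p):=\int_0^\infty ps^{p-1}(\varphi(s)-1)\,ds$ for $-1<p<0$.
\begin{itemize}
\item The constants cancel against those of $e^{-s/\alpha}$, so $F_\varphi(p)\leq\alpha^p\Gamma(1+p)$ is equivalent to $\int_0^\infty s^{p-1}(\varphi(s)-e^{-s/\alpha})\,ds\geq0$.
\item This follows from the single crossing at $s_0$ and $\int_0^\infty s^{q-1}(\varphi(s)-e^{-s/\alpha})\,ds=0$, by inserting the factor $s^{p-q}-s_0^{p-q}$.
\item Since $x\mapsto x^{1/p}$ is decreasing, this is exactly the required direction. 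The same computation covers $q<0$.
\item For convergence at $0$: if $h\not\equiv0$, then $\varphi(s_1)>0$ for some $s_1>0$. Concavity of $\log\varphi$ on $[0,s_1]$ then gives $0\leq\varphi(0)-\varphi(s)=O(s)$, so $s^{p-1}(\varphi(0)-\varphi(s))$ is integrable near $0$ for $p>-1$.
\end{itemize}

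One slip to fix: your first representation of $\int_{L(f)}h^p\,d\overline{\nu}$ double-counts the tail. Since $\varphi(s)=0$ for $s>\Vert h\Vert_\infty$, the term $\int_{\Vert h\Vert_\infty}^\infty|p|s^{p-1}\varphi(0)\,ds=\varphi(0)\Vert h\Vert_\infty^{p}$ is already contained in $\int_0^\infty|p|s^{p-1}(\varphi(0)-\varphi(s))\,ds$. Either drop the extra summand or integrate only over $[0,\Vert h\Vert_\infty]$. The identity behind your ``equivalently'' sentence, $\int_{L(f)}h^p\,d\overline{\nu}=\int_0^\infty ps^{p-1}(\varphi(s)-\varphi(0))\,ds$, is the correct one, and it is the one the rest of your argument uses.
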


Theorem \ref{thm:FuncBerwald>-1} was proved in \cite{AAGJV} in the range of parameters $0<p\leq q$. It was extended to the range of parameters  $-1<p\leq q$ in \cite{ABG2}, in order to show that, when applied to the function
$h:L(f)\to[0,\infty)$ given by $h(x,t)=|L_t(f)\cap\{(x,t)+\lambda e_n\,:\,\lambda\geq0\}|_1$ with parameters $p\to(-1)^+$ and $q=n$, one obtains inequality \eqref{eq:InclusionFunctionalZhangRadialFunctions} that leads to the functional version of Zhang's inequality, \eqref{eq:FunctionalZhang}.

\begin{rmk}\label{rmk:IntegrabilityConcaveFunctionsOnEpigraphs}
Let us point out that if $f\in\mathcal{F}(\R^n)$ and $h:L(f)\to[0,\infty)$ is a concave function, then there exists an affine function $h_1:L(f)\to[0,\infty)$ of the form $h_1(x,t)=at+b$ such that $h(x,t)\leq h_1(x,t)$ for every $(x,t)\in L(f)$ and then, $h^p$ is integrable with respect to the measure $d\overline{\nu}$ for every $p>0$. Indeed, notice that the function $g:[0,\infty)\to[0,\infty)$ given by $\displaystyle{g(t)=\sup_{x\in L_t(f)}h(x,t)}$ is concave, and take $h_1(x,t)=g_1(t)$ given by a supporting line of $g$.
\end{rmk}

In this paper, we will provide a discrete version of Theorem \ref{thm:FuncBerwald>-1} (see Theorem \ref{thm:DiscreteFunctionalBerwald}), where the measure $d\overline{\nu}$ on $\R^n\times[0,\infty)$ is replaced by the measure $d\nu=d\nu_{n+1}(x,t)= dG_n(x) \otimes e^{-t}dt$, in the range $0<p\leq q$. We will see that inequality \eqref{eq:InclusionFunctionalZhangRadialFunctions} can be obtained by using Theorem \ref{thm:FuncBerwald>-1} with only positive parameters and that, in the same spirit, applying such discrete version to the appropriate function, we can obtain Theorem \ref{thm:DiscreteFunctionalZhang}.


\section{Discrete versions of Berwald's inequalities}\label{sec:Discrete Berwald's inequalities}

In this section, we provide a new proof of the discrete version of Berwald's inequality, Theorem \ref{thm:BerwaldDiscreteNew}, in the range of parameters $0<p\leq q$,  obtaining it as a consequence of the continuous version given by Theorem \ref{thm:Berwald} and showing in this way that both the discrete and the continuous versions are equivalent. Following this idea, we will also prove a discrete version of Theorem \ref{thm:FuncBerwald>-1} in the range $0<p\leq q$ (see Theorem \ref{thm:DiscreteFunctionalBerwald} below) and we will show that it implies Theorem \ref{thm:FuncBerwald>-1} in this range of the parameters.

We start by showing that the following Lemma, which is a particular case of Lemma 3.1 in \cite{Ma}, can be obtained as a consequence of Berwald's inequality:

\begin{lemma}\label{lem:LemmaBerwald}
Let $f:[0,M] \to [0, \infty)$ be a non-identically 0 decreasing concave function. For any $n \in \N$ and any $0<p\leq q$
$$
\left( \frac{\binom{n+q}{q}}{f^n(0)} \int_0^M q s^{q-1} f^n(s) ds  \right)^{1/q} \leq \left( \frac{\binom{n+p}{p}}{f^n(0)} \int_0^M p s^{p-1} f^n(s) ds \right)^{1/p}.
$$
\end{lemma}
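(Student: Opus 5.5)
Following the hint that this lemma "can be obtained as a consequence of Berwald's inequality", we apply Theorem \ref{thm:Berwald} to a suitable convex body in $\R^{n+1}$ and a suitable concave function on it. Since $f$ is decreasing and concave on $[0,M]$, the set
$$
K=\{(x,s)\in\R^n\times[0,M]\,:\,\Vert x\Vert_2\leq f(s)\}
$$
is convex: the hypograph of a concave function is convex, and $(x,s)\mapsto \Vert x\Vert_2-f(s)$ is convex. It has non-empty interior because $f(0)>0$; indeed, if $f(0)=0$ then, $f$ being decreasing and non-negative, $f\equiv 0$, which is excluded. If $f$ is not continuous at $M$ (a downward jump), we replace $K$ by its closure, which changes nothing measure-theoretically.

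On $K$ take the concave function $h(x,s)=s$. Integrating in $s$ over slices,
$$
\int_K h^q\,dx\,ds=|B_2^n|\int_0^M s^q f^n(s)\,ds .
$$
This weight is $s^q$ rather than $s^{q-1}$, so the exponents do not match the lemma directly. We therefore avoid applying Berwald's inequality to $K$ itself.

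The key choice is to use $h(x,s)=f(s)-\Vert x\Vert_2$ instead, or better, a cone construction. Consider
$$
\widetilde K=\{(x,s)\,:\,0\leq s\leq M,\ \Vert x\Vert_2\leq f(s)\}
$$
together with the concave function $h(x,s)=f(s)-\Vert x\Vert_2$. Then $\int_{\widetilde K} h^q$ involves $f^{n+q}$, which does not fit either. Instead we interchange the roles: view the region under $f$ as the set of $(s,r)$ with $r\leq f(s)$. Its inverse in the other variable, $g(r)=\sup\{s: f(s)\geq r\}$ for $r\in[0,f(0)]$, is concave, decreasing and non-negative; this follows from the convexity of the planar hypograph and the monotonicity of $f$.

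Set $K'=\{(x,r)\in\R^n\times[0,f(0)]\,:\,\Vert x\Vert_2\leq \ldots\}$; it is simplest to take
$$
K'=\{(x,r)\,:\,x\in\R^n,\ \Vert x\Vert_2\leq f(0)\ \text{ scaled}\}.
$$
Concretely we use the convex body $L=\{y\in\R^n : \Vert y\Vert_2\leq f(0)\}$ and the function $\phi(y)=g(\Vert y\Vert_2)$ on $L$. Since $g$ is concave and decreasing and $\Vert\cdot\Vert_2$ is convex, $\phi$ is concave. The layer-cake formula gives
$$
\int_L \phi^q\,dy=\int_0^{M} q s^{q-1}\,\bigl|\{y:\phi(y)> s\}\bigr|\,ds,
$$
and $\{\phi>s\}=\{\Vert y\Vert_2< f(s)\}$ up to null sets, because $g(r)>s\iff r<f(s)$ for monotone $f$, apart from countably many points. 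Hence
$$
\int_L\phi^q\,dy=|B_2^n|\int_0^M qs^{q-1}f^n(s)\,ds ,
$$
and of course $|L|=|B_2^n|f^n(0)$.

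Substituting these into Theorem \ref{thm:Berwald} with $0<p\leq q$, the factors $|B_2^n|$ cancel, and using $\binom{n+q}{n}=\binom{n+q}{q}$ we obtain exactly the stated inequality.

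The main obstacle is the careful verification of the equality $\{\phi>s\}=\{\Vert y\Vert_2<f(s)\}$ up to null sets. This requires handling the flat pieces of $f$, where $g$ jumps, and the possible discontinuity of $f$ at $M$. Since a monotone function has only countably many such jump or flat levels, these affect only a measure-zero set of $s$, and hence do not change the integrals.
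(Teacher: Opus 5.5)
Your final construction is correct and is the paper's argument with the specific choice $K=f(0)B_2^n$. The paper takes an arbitrary convex body $K\ni 0$ and the concave function whose hypograph is $\{(x,s): x\in f(s)K/f(0)\}$; for the ball this function is exactly your $\phi(y)=g(\Vert y\Vert_2)$, and the paper then applies Theorem \ref{thm:Berwald} and the layer-cake formula just as you do.

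You should delete the exploratory false starts: the bodies in $\R^{n+1}$ and the ill-defined $K'$ with ``scaled'' in it. The level-set step is also cleaner than you suggest. Since $f$ is continuous on $[0,M)$, for every $s<M$ the set $\{\phi>s\}$ lies between the open and the closed ball of radius $f(s)$. So there is no need to discard countably many levels.
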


\begin{proof} Notice that, since any concave function is continuous on the interior of its support, we can assume without loss of generality that $f$ is continuous on $[0,M]$, by changing the value at $M$ if necessary.

For any $n \in \N$ we consider any convex body $K \subseteq \R^n$ with $0 \in K$ and we define the convex body
\[
L:=\left\{ (x,s) \in \R^n \times [0,M] : x \in f(s) \frac{K}{f(0)} \right\}\subseteq\R^{n+1}.
\]

Since $f$ is a concave and decreasing function, we have that $L$ is the hypograph of the concave function $h: K \to [0,M]$ given by
\[
h(x) = \max \{ s \in \R: (x,s) \in L \}.
\]

By the continuous version of Berwald's inequality, Theorem \ref{thm:Berwald}, for any $0<p\leq q$ we have
\[
\left( \frac{\binom{n+q}{q}}{|K|} \int_K h^q(x) dx  \right)^{1/q} \leq \left( \frac{\binom{n+p}{p}}{|K|} \int_K h^p(x) dx  \right)^{1/p}.
\]

Finally notice that, for any $p>0$, taking into account that $C_s(h)=\{ x \in K: h(x) \geq s \}$ for every $s\geq0$,
\begin{eqnarray*}
\frac{1}{|K|}\int_K h^p(x) dx &=& \frac{1}{|K|}\int_0^{\infty} p s^{p-1} |C_s(h)| ds = \frac{1}{|K|}\int_0^M p s^{p-1} \left | f(s) \frac{K}{f(0)} \right | ds\cr
& = &\frac{1}{f^n(0)} \int_0^M st^{p-1} f^n(s) ds.
\end{eqnarray*}
\end{proof}

\begin{rmk}
As a matter of fact, the original proof of Berwald's inequality in \cite{Be} is obtained by proving the inequality in Lemma \ref{lem:LemmaBerwald} for the function $f(s)=|C_s(h)|^\frac{1}{n}$, where $h:K\to[0,M]$ is a concave function. In this proof of Lemma \ref{lem:LemmaBerwald} we simply constructed, given a decreasing concave function $f:[0,M]\to[0,\infty)$, the concave function $h:K\to[0,\infty)$ for some convex body $K\subseteq\R^n$ such that $f(s)=|C_s(h)|^\frac{1}{n}$.
\end{rmk}

\begin{rmk}
Let us also point out that Lemma \ref{lem:LemmaBerwald} improves inequality \eqref{eq:InequalityInclusionBallsBodies} for $\frac{1}{n}$-concave functions, i.e., functions $g:[0, \infty)\to[0,\infty)$ such that $g^\frac{1}{n}$ is concave.
\end{rmk}

Let us now prove Theorem \ref{thm:BerwaldDiscreteNew}:

\begin{proof}[Proof of Theorem \ref{thm:BerwaldDiscreteNew}]

Let $K\subseteq\R^n$ be a convex body containing the origin and $h:K\to[0,\infty)$ a concave function. Let us denote $h^\diamond: K+C_n\to[0,\infty)$ the function $h^\diamond=h\star\chi_{C_n}$.

Let us define the functions $f_1, \Tilde{f_1} \colon [0, \Vert h\Vert_\infty] \to [0, \infty)$ given by
\begin{itemize}
\item$f_1(s)=G_n(C_s(h))$ and
\item $\Tilde{f_1}(s)=G_n(C_s(h^\diamond))$.
\end{itemize}

First of all notice that since $K$ is a convex body, $h$ is bounded and for any $s_0, s_1 \in [0, \norma{h}_{\infty})$ the sets $C_{s_0}(h)$ and $C_{s_1}(h)$ are non-empty bounded sets. Thus, taking into account \eqref{eq:InclusionSuperlevelSetsAsplundConcave} and using the  concavity of $h$ and the discrete Brunn-Minkowski inequality, Theorem \ref{thm: BM_lattice_point_no_G(K)G(L)>0}, we have that for any $\lambda \in [0,1]$
\begin{align*}
\Tilde{f_1}((1-\lambda)s_0 + \lambda s_1)^{1/n} &= G_n(C_{(1-\lambda)s_0 + \lambda s_1}(h^\diamond))^{1/n} \\
&\geq G_n(C_{(1-\lambda)s_0 + \lambda s_1}(h) + C_n)^{1/n} \\
& \geq G_n((1-\lambda) C_{s_0}(h) + \lambda C_{s_1}(h) + C_n )^{1/n} \\
& \geq (1-\lambda) G_n(C_{s_0}(h))^{1/n} + \lambda G_n(C_{s_1}(h))^{1/n} \\
&=(1-\lambda)f_1(s_0)^{1/n} + \lambda f_1(s_1)^{1/n}.
\end{align*}

The above inequality also holds if $s_0=\Vert h\Vert_\infty$ or $s_1=\Vert h\Vert_\infty$ and $C_{\Vert h\Vert_\infty}(h)=\{ x \in K: h(x) = \Vert f\Vert_\infty \}\neq\emptyset$. As a consequence we have that $\textrm{conv}\left\{\text{hyp} \left (f_1^{1/n} \right) \right\} \subseteq\textrm{hyp}\left(\Tilde{f_1}^{1/n} \right)$ and then, taking  $M=\Vert h\Vert_\infty$ and $f: [0,M] \to [0,\infty)$ the concave function defined as
$$
f(s)=\sup\left\{r\geq0\,:\,(s,r)\in\textrm{conv}\left\{\textrm{hyp}\left(f_1^{1/n}\right)\right\}\right\},
$$
we have that
$$
\text{hyp} \left (f_1^{1/n} \right) \subseteq\textrm{conv}\left \{ \textrm{hyp} \left (f_1^{1/n} \right) \right \}\subseteq\textrm{hyp}(f)\subseteq \textrm{hyp}\left(\Tilde{f_1}^{1/n} \right).
$$
On the other hand, notice that $f_1$ and $\Tilde{f_1}$ are decreasing functions. Therefore, 

\begin{itemize}
	\item $f$ is concave by definition, as its hypograph is a convex set.
	\item $f$ is decreasing since $f_1$ is also decreasing.
	\item For any $s \in[0,M]$, $f_1(s) \leq f(s)^n$ since $\text{hyp} \left (f_1^{1/n} \right) \subseteq \text{hyp} (f)$.
	\item For any $s \in[0,M]$, $f(s)^n \leq \Tilde{f_1}(s)$ since $\text{hyp} (f) \subseteq \textrm{hyp}\left(\Tilde{f_1}^{1/n} \right)$.
	\item Since $f_1^{1/n}$ is non-increasing on $[0, M]$ then $f^n(0)=f_1(0)=G_n(K)$.
\end{itemize}

Therefore we can apply Lemma \ref{lem:LemmaBerwald} to the function $f$ and obtain that for any $0<p\leq q$
\[
\left( \frac{\binom{n+q}{q}}{f^n(0)} \int_0^{M} q s^{q-1} f^n(s) ds  \right)^{1/q} \leq \left( \frac{\binom{n+p}{p}}{f^n(0)} \int_0^M p s^{p-1} f^n(s) ds \right)^{1/p}.
\]

Now, by the properties of the function $f$ we have that
\[
 \left( \frac{\binom{n+q}{q}}{G_n(K)} \int_0^M q s^{q-1} f_1(s) ds  \right)^{1/q} \leq\left( \frac{\binom{n+q}{q}}{f^n(0)} \int_0^M q s^{q-1} f^n(s) ds  \right)^{1/q}
\]

and
\[
\left( \frac{\binom{n+p}{p}}{f^n(0)} \int_0^M p s^{p-1} f^n(s) ds \right)^{1/p} \leq \left( \frac{\binom{n+p}{p}}{G_n(K)} \int_0^{M} p s^{p-1} \Tilde{f_1}(s) ds \right)^{1/p}.
\]

Finally we only need to take into account that
\begin{align*}
\int_0^{M} q s^{q-1} f_1(s) ds = \int_0^{\Vert h\Vert_\infty} q s^{q-1} G_n(C_s(h)) ds = \int_{K}h^q(x)dG_n(x)
\end{align*}

and
\begin{eqnarray*}
\int_0^{M} p s^{p-1} \Tilde{f_1}(s) ds &=& \int_0^{\Vert h\Vert_\infty} q s^{q-1} G_n(C_s(h^\diamond)) ds =\int_{K+C_n}(h^\diamond)^p(x)dG_n(x)\cr
&=&\int_{K+C_n}(h\star\chi_{C_n})^p(x)dG_n(x).
\end{eqnarray*}
\end{proof}

Let us now state our discrete version of Theorem \ref{thm:FuncBerwald>-1} in the range of parameters $0<p\leq q$:

\begin{thm}\label{thm:DiscreteFunctionalBerwald}
Let $f \in \mathcal{F}(\R^n)$  such that $\norma{f}_{\infty}=f(0)$ and let  $h: L(f) \to [0, \infty)$ be a concave function, where $L(f)=\{ (x,t) \in \R^n \times [0,\infty): f(x) \geq e^{-t} \norma{f}_{\infty} \}$. Let $d\nu(x,t)=d\nu_{n+1}(x,t)=dG_n(x)\otimes e^{-t}dt$. Then, for any $0<p\leq q$ we have
\begin{eqnarray*}
&&\left( \frac{1}{\Gamma(1+q) \nu(L(f))}  \int_{L(f)} h^q(x,t) d\nu(x,t) \right)^{1/q} \cr
&\leq& \left( \frac{1}{\Gamma(1+p) \nu(L(f))}  \int_{L(f)+C_n} (h\star\chi_{C_n})^p(x,t) d\nu(x,t) \right)^{1/p}.
\end{eqnarray*}
\end{thm}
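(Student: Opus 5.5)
We mimic the proof of Theorem \ref{thm:BerwaldDiscreteNew}: first obtain a one-dimensional Berwald-type lemma for the measure $e^{-t}dt$ from Theorem \ref{thm:FuncBerwald>-1}, and then feed it with lattice-point counts of superlevel sets of $h$, made concave through the discrete Brunn--Minkowski inequality, Theorem \ref{thm: BM_lattice_point_no_G(K)G(L)>0}.

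\emph{Step 1 (auxiliary lemma).} Let $F:[0,\infty)\times[0,M]\to[0,\infty)$, $F(t,s)$, be such that $F^{1/n}$ is jointly concave on its support, non-increasing in $s$ and non-decreasing in $t$. We claim that for $0<p\leq q$
$$
\left(\frac{1}{\Gamma(1+q)\int_0^\infty e^{-t}F(t,0)dt}\int_0^\infty\int_0^M qs^{q-1}F(t,s)e^{-t}ds\,dt\right)^{1/q}
$$
is bounded by the same expression with $p$ in place of $q$. To prove it, fix a convex body $K\ni0$ and build a set as in Lemma \ref{lem:LemmaBerwald}:
$$
L=\{(x,t,s): x\in F(t,s)^{1/n}K,\ t\geq0,\ 0\leq s\leq M\}.
$$
This set is convex by joint concavity. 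Its $t$-sections in $x$ are $L_t(\varphi)$ for the log-concave function $\varphi(x)=\sup\{e^{-t}:(x,t,0)\in L\}$; here monotonicity in $t$ is used, and $\Vert\varphi\Vert_\infty=\varphi(0)$ when $F(0,0)>0$. Set $\tilde h(x,t)=\max\{s:(x,t,s)\in L\}$, which is concave on $L(\varphi)$. Since $C_s(\tilde h)\cap\{t\}=F(t,s)^{1/n}K$ by monotonicity in $s$, Theorem \ref{thm:FuncBerwald>-1} applied to $\tilde h$ gives exactly the claimed inequality after the layer-cake computation. If $F(0,0)=0$, we shift $t$ or argue by approximation.

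\emph{Step 2 (discrete data).} Set $h^\diamond=h\star\chi_{C_n}$, and define
$$
F_1(t,s)=G_n(\{x:(x,t)\in C_s(h)\}),\qquad \tilde F_1(t,s)=G_n(\{x:(x,t)\in C_s(h^\diamond)\}).
$$
Here $C_n\subseteq\R^n\times\{0\}$, so $C_s(h^\diamond)\supseteq C_s(h)+C_n$ acts in the $x$-slices. Concavity of $h$ gives $(1-\lambda)$-convex combinations of slices: the slice at a convex combination of $(t_i,s_i)$ contains the corresponding convex combination of slices. Then Theorem \ref{thm: BM_lattice_point_no_G(K)G(L)>0} yields
$$
\tilde F_1\big((1-\lambda)(t_0,s_0)+\lambda(t_1,s_1)\big)^{1/n}\geq(1-\lambda)F_1(t_0,s_0)^{1/n}+\lambda F_1(t_1,s_1)^{1/n}.
$$
Let $F^{1/n}$ be the concave envelope, i.e.\ the upper boundary of $\mathrm{conv}\,\mathrm{hyp}(F_1^{1/n})$. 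Then $F_1\leq F\leq\tilde F_1$, and $F$ inherits the monotonicities, because $F_1$ is non-increasing in $s$ and non-decreasing in $t$ (as $L_t(f)$ increases in $t$). The key point is $F(t,0)=F_1(t,0)=G_n(L_t(f))$. This needs care: the hull could raise the $s=0$ slice, but since $F_1(\cdot,0)$ dominates $F_1(\cdot,s)$, it suffices that $t\mapsto G_n(L_t(f))^{1/n}$ be concave, which is false in general. Hence we instead use the normalization $\int e^{-t}F(t,0)dt$ in Step 1 and compare it with $\nu(L(f))$ separately.

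\emph{Main obstacle.} The delicate point is this normalization. The left-hand side needs $\nu(L(f))\geq\int e^{-t}F(t,0)$ in order to lower bound after division, i.e.\ $F(t,0)\leq$ something we control, while the right-hand side needs the opposite. Since both sides of the target carry $\nu(L(f))$ with exponents $1/q$ and $1/p$, what we need is precisely $F(t,0)=G_n(L_t(f))$. I would first try to avoid the envelope in the $t$ direction: apply the Step 1 construction only in $s$ for each fixed $t$, using Lemma \ref{lem:LemmaBerwald}-type convexity in $s$, where $F_1(t,0)=G_n(L_t(f))$ is the maximum, so the envelope preserves it. Joint concavity is needed only for the set $L$ in Step 1, and for that one can try using $\tilde F_1$ directly where it is convex-dominated. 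The hypothesis $f(0)=\Vert f\Vert_\infty$ ensures $0\in L_t(f)$, so all slices are nonempty and $G_n(L_t(f))\geq1$. Finally, the layer-cake identities $\int_0^\infty\int_0^M qs^{q-1}F_1e^{-t}=\int_{L(f)}h^q\,d\nu$, and similarly for $\tilde F_1$ with $h\star\chi_{C_n}$, conclude the proof.
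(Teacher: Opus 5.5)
\paragraph{There is a genuine gap.} Your proposal stops at the step you call the main obstacle, and the workaround you sketch would not work. An envelope taken in $s$ separately for each fixed $t$ need not be jointly concave in $(t,s)$, so the set $L$ of Step 1 need not be convex, and $\tilde F_1^{1/n}$ is not concave either. Applying a one-variable Berwald inequality on each $t$-slice and integrating against $e^{-t}dt$ also fails, because H\"older's inequality goes the wrong way when you combine the slices.

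However, the obstacle is not real, and your diagnosis of it is wrong. Write $N=\int_0^\infty e^{-t}F(t,0)\,dt$ and $A_r(G)=\int_0^\infty\int_0^\infty rs^{r-1}G(t,s)e^{-t}\,ds\,dt$. Then Step 1 together with $F_1\le F\le\tilde F_1$ gives
\[
\Big(\frac{A_q(F_1)}{\Gamma(1+q)\,\nu(L(f))}\Big)^{1/q}\le\Big(\frac{A_p(\tilde F_1)}{\Gamma(1+p)\,\nu(L(f))}\Big)^{1/p}\Big(\frac{N}{\nu(L(f))}\Big)^{\frac1q-\frac1p}.
\]
The last factor is at most $1$, because $N\ge\nu(L(f))$ (since $F\ge F_1$) and $\frac1q-\frac1p\le0$. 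A larger normalization only makes the inequality stronger, so the envelope raising $F(\cdot,0)$ is harmless. You never need $F(t,0)=G_n(L_t(f))$, nor concavity of $t\mapsto G_n(L_t(f))^{1/n}$.

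\paragraph{A second unjustified step.} You claim $F\le\tilde F_1$ for the concave envelope in the two variables $(t,s)$. In $\R^2$ the envelope involves convex combinations of three points. The discrete Brunn--Minkowski inequality only gives your mixed two-point inequality, and that cannot be iterated because $F_1\neq\tilde F_1$: each iteration costs another copy of $C_n$. You need the following repairs:
\begin{itemize}
\item A $k$-set version $G_n(\sum_i\lambda_iA_i+C_n)^{1/n}\ge\sum_i\lambda_iG_n(A_i)^{1/n}$. It can be proved along the same lines as the two-set case, but it must be proved.
\item Restricting the envelope to the convex set $D$ of pairs $(t,s)$ with nonempty slices. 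Your arithmetic-mean inequality says nothing when one slice is empty.
\item A correct justification that $F_1$ is monotone in $t$. This uses that $h(x,\cdot)$ is non-decreasing, which holds because a nonnegative concave function on a half-line is non-decreasing; it is not enough that $L_t(f)$ increases.
\end{itemize}
With these repairs, your lift to $\R^{n+2}$ and Theorem \ref{thm:FuncBerwald>-1}, modeled on Lemma \ref{lem:LemmaBerwald}, is a valid alternative route, and Step 1 itself is fine up to routine technicalities.

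\paragraph{How the paper avoids both issues.} It turns the discrete Brunn--Minkowski bound into a geometric-mean bound by AM--GM, so empty slices are harmless. It then integrates out $t$ with Pr\'ekopa--Leindler, which accepts three different functions and so absorbs the $C_n$ shift. This yields $I_{h^\diamond}((1-\lambda)s_0+\lambda s_1)\ge I_h(s_0)^{1-\lambda}I_h(s_1)^{\lambda}$ for the one-variable functions $I_h(s)=\nu(C_s(h))$. In one variable, two-point combinations suffice for the log-concave envelope $g$. Moreover $g(0)=I_h(0)=\nu(L(f))$ automatically, since $I_h$ is non-increasing. Then \eqref{eq:InequalityInclusionBallsBodies} finishes the proof.
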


\begin{rmk}
Notice that the condition $\norma{f}_{\infty}=f(0)$ ensures that $0\in L_t(f)$ for every $t\geq0$ and then
$$
\nu(L(f))=\int_0^\infty e^{-t} G_n(L_t(f))dt\geq\int_0^\infty e^{-t}dt=1>0.
$$
\end{rmk}

Before proving Theorem \ref{thm:DiscreteFunctionalBerwald}, let us fix some notation that we will use through the rest of the section: Given $f\in\mathcal{F}(\R^n)$ such that $\norma{f}_{\infty}=f(0)$ and $h: L(f) \to [0, \infty)$ a concave function, we will denote $h^{\diamond}: L(f)+C_n\to[0,\infty)$ the function defined as $h^{\diamond}=h\star\chi_{C_n}$. Let us recall that, as in equation \eqref{eq:DefinitionC_s}, and taking into account \eqref{eq:InclusionSuperlevelSetsAsplundConcave}, for any $s\geq 0$,
\begin{itemize}
	\item $C_s(h)=\{ (x,t) \in L(f): h(x,t) \geq s \}$,
	\item $C_s(h^{\diamond})=\{ (x,t) \in L(f)+C_n: h^{\diamond}(x,t) \geq s \}\supseteq C_s(h)+C_n$.
\end{itemize}

We will also denote by $I_h, I_{h^{\diamond}} \colon [0,\infty) \to [0,\infty)$ the functions given by
\begin{align*}
I_h(s):=& \nu(C_s(h))=\int_{C_s(h)} e^{-t} dt dG_n(x)=\int_0^{\infty} e^{-t} G_n(\{ x \in \R^n: (x,t)\in C_s(h) \}) dt\cr
=& \int_0^{\infty} e^{-t} G_n(\{ x \in L(f): h(x,t) \geq s \}) dt,
\end{align*}
and, analogously,
\begin{align*}
I_{h^{\diamond}}(s):=&\nu(C_s(h^\diamond))=\int_{C_s(h^{\diamond})} e^{-t} dt dG_n(x)=\int_0^{\infty} e^{-t} G_n(\{ x \in \R^n: (x,t)\in C_s (h^{\diamond}) \}) dt \cr
=& \int_0^{\infty} e^{-t} G_n(\{ x \in L(f)+C_n: h^{\diamond}(x,t) \geq s \}) dt.
\end{align*}

Notice that, clearly, the functions $I_h$ and $I_{h^\diamond}$ are non-increasing on $[0,\infty)$.

In order to prove Theorem \ref{thm:DiscreteFunctionalBerwald} we start proving the following lemma:

\begin{lemma}\label{lem:LogConcFunctionalDiscreteBerwald}
Let $f \in \mathcal{F}(\R^n)$ and let $L(f)=\{ (x,t) \in \R^n \times [0,\infty): f(x) \geq e^{-t} \norma{f}_{\infty} \}$. Let $h \colon L(f) \to [0, \infty)$ be a concave function. For any $s_0, s_1 \in [0,\infty)$ and $\lambda \in [0,1]$, we have
\[
I_{h^{\diamond}} ((1-\lambda)s_0 +\lambda s_1) \geq I_h(s_0)^{1-\lambda} I_h(s_1)^{\lambda}.
\]
\end{lemma}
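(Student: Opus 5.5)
We plan to deduce the statement from a Prékopa–Leindler type inequality in the variable $t$ (for the factor $e^{-t}dt$), combined with the discrete Brunn--Minkowski inequality, Theorem \ref{thm: BM_lattice_point_no_G(K)G(L)>0}, in the variable $x$. Fix $s_0,s_1\geq0$, $\lambda\in[0,1]$, and write $s_\lambda=(1-\lambda)s_0+\lambda s_1$. For $t\geq0$ and $s\geq0$ we introduce the sections
$$
A_s(t)=\{x\in\R^n\,:\,(x,t)\in C_s(h)\},\qquad \tilde A_s(t)=\{x\in\R^n\,:\,(x,t)\in C_s(h^\diamond)\}.
$$
Then $I_h(s)=\int_0^\infty e^{-t}G_n(A_s(t))dt$ and $I_{h^\diamond}(s)=\int_0^\infty e^{-t}G_n(\tilde A_s(t))dt$. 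If $I_h(s_0)=0$ or $I_h(s_1)=0$ there is nothing to prove, so we assume both are positive.

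The key geometric step is an inclusion of sections. Take $t_0,t_1\geq0$, $x_0\in A_{s_0}(t_0)$ and $x_1\in A_{s_1}(t_1)$. Since $L(f)$ is convex, $((1-\lambda)x_0+\lambda x_1,(1-\lambda)t_0+\lambda t_1)\in L(f)$, and concavity of $h$ gives $h\geq s_\lambda$ at that point. Hence
$$
(1-\lambda)A_{s_0}(t_0)+\lambda A_{s_1}(t_1)\subseteq A_{s_\lambda}((1-\lambda)t_0+\lambda t_1).
$$
By \eqref{eq:InclusionSuperlevelSetsAsplundConcave}, applied with the cube $C_n$ translating only in the $x$-variables (here $C_n\subseteq\R^n\times\{0\}$ as a subset of $\R^{n+1}$), we also have $A_{s_\lambda}(t)+C_n\subseteq\tilde A_{s_\lambda}(t)$. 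The sections are bounded because $L_t(f)$ is bounded. Whenever both sections are non-empty, Theorem \ref{thm: BM_lattice_point_no_G(K)G(L)>0} then yields
$$
G_n(\tilde A_{s_\lambda}((1-\lambda)t_0+\lambda t_1))^{1/n}\geq(1-\lambda)G_n(A_{s_0}(t_0))^{1/n}+\lambda G_n(A_{s_1}(t_1))^{1/n}.
$$
By the arithmetic--geometric mean inequality this is at least $G_n(A_{s_0}(t_0))^{(1-\lambda)/n}G_n(A_{s_1}(t_1))^{\lambda/n}$. Raising to the $n$-th power shows that the function $H(t)=e^{-t}G_n(\tilde A_{s_\lambda}(t))$ satisfies
$$
H((1-\lambda)t_0+\lambda t_1)\geq F(t_0)^{1-\lambda}G(t_1)^\lambda,
$$
where $F(t)=e^{-t}G_n(A_{s_0}(t))$ and $G(t)=e^{-t}G_n(A_{s_1}(t))$, since the exponentials also split as $e^{-(1-\lambda)t_0-\lambda t_1}=(e^{-t_0})^{1-\lambda}(e^{-t_1})^\lambda$. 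If one of the sections is empty, the corresponding factor vanishes and the inequality is trivial.

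The final step applies the one-dimensional Prékopa--Leindler inequality on $[0,\infty)$ (all three functions are extended by $0$ to $\R$), which gives $\int H\geq(\int F)^{1-\lambda}(\int G)^\lambda$, that is, $I_{h^\diamond}(s_\lambda)\geq I_h(s_0)^{1-\lambda}I_h(s_1)^\lambda$.

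The main obstacle is measurability of $F$, $G$ and $H$. This is mild: $G_n$ of the sections is monotone in $t$, because $C_s(h)$ and $C_s(h^\diamond)$ are convex and the sections are nested in a suitable sense. Failing that, one can appeal to the version of Prékopa--Leindler that uses outer integrals. Prékopa--Leindler is not among the earlier results of the paper, but it is classical; it can alternatively be derived from Theorem \ref{thm: BM_lattice_point_no_G(K)G(L)>0} with $n=1$ via the usual level-set argument.
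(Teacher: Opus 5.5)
Your proposal is correct and follows the paper's proof essentially step by step: the section inclusion from concavity of $h$, the enlargement by $C_n$ via \eqref{eq:InclusionSuperlevelSetsAsplundConcave}, discrete Brunn--Minkowski combined with the arithmetic--geometric mean inequality, the splitting of $e^{-t_\lambda}$, and one-dimensional Pr\'ekopa--Leindler. Your measurability remark, which the paper leaves implicit, holds, but convexity alone does not give nestedness: the sections increase in $t$ because a nonnegative concave function on the upward-closed set $L(f)$ is non-decreasing in $t$ (alternatively, convexity of $C_s(h)$ makes each fiber $\{t:(x,t)\in C_s(h)\}$ an interval, so $t\mapsto G_n(A_s(t))$ is a countable sum of indicators of intervals).
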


\begin{proof}
Let $s_0, s_1 \in [0, \infty)$ and $\lambda \in [0,1]$. If we fix $(x_0, t_0) \in C_{s_0}(h)$ and $(x_1, t_1) \in C_{s_1}(h)$, since $h$ is a concave function, we have that
\[
h((1-\lambda)(x_0,t_0) + \lambda (x_1,t_1)) \geq (1-\lambda) h(x_0,t_0) + \lambda h(x_1,t_1) \geq (1-\lambda) s_0 + \lambda s_1.
\]

Assume first that $C_{s_0}(h) \neq \emptyset$ and $C_{s_1}(h) \neq \emptyset$. Then,
\[
(1-\lambda)C_{s_0}(h) + \lambda C_{s_1}(h) \subseteq C_{s_{\lambda}}(h),
\]

where $s_{\lambda}=(1-\lambda)s_0 + \lambda s_1$.

As a consequence, if we fix $t_0, t_1 \in [0,\infty)$ and we denote $t_{\lambda}=(1-\lambda) t_0 + \lambda t_1$, then
\begin{align*}
(1-\lambda)\{ x \in \R^n : (x,t_0) \in C_{s_0}(h) \} + \lambda \{ x \in \R^n : (x,t_1) \in C_{s_1}(h) \} \subseteq \{ x \in \R^n : (x, t_{\lambda}) \in C_{s_{\lambda}}(h)  \}.
\end{align*}

Therefore, the set
\begin{align*}
(1-\lambda) \{ x \in \R^n : (x,t_0) \in C_{s_0}(h) \} + \lambda \{ x \in \R^n : (x,t_1) \in C_{s_1}(h) \} + C_n
\end{align*}

is contained in
\[
\{ x \in \R^n : (x, t_{\lambda}) \in C_{s_{\lambda}}(h)  \} + C_n \subseteq \{ x \in \R^n : (x, t_{\lambda}) \in C_{s_{\lambda}}(h^{\diamond})  \} .
\]

Since
\begin{itemize}
\item $\{ x \in \R^n : (x,t_0) \in C_{s_0}(h) \}\subseteq L_{t_0}(f)$ and
\item $\{ x \in \R^n : (x,t_1) \in C_{s_1}(h) \}\subseteq L_{t_1}(f)$,
\end{itemize}
we have that these sets are bounded. Therefore, if $t_0$ and $t_1$ satisfy that the previous sets are non-empty, by the discrete Brunn-Minkowski discrete inequality, Theorem \ref{thm: BM_lattice_point_no_G(K)G(L)>0}, and the arithmetic-geometric mean inequality, we have
\begin{align*}
G_n&(\{ x \in \R^n : (x, t_{\lambda}) \in C_{s_{\lambda}}(h^{\diamond}) \})^{1/n} \\
&\geq (1-\lambda) G_n(\{ x \in \R^n : (x,t_0) \in C_{s_0}(h) \} )^{1/n} +\lambda G_n( \{ x \in \R^n : (x,t_1) \in C_{s_1}(h) \} )^{1/n} \\
& \geq G_n(\{ x \in \R^n : (x,t_0) \in C_{s_0}(h) \} )^{(1-\lambda)/n} G_n( \{ x \in \R^n : (x,t_1) \in C_{s_1}(h) \} )^{\lambda/n}.
\end{align*}
Therefore, we have obtained that
\begin{align} \label{eq:BM}
G_n&(\{ x \in \R^n : (x, t_{\lambda}) \in C_{s_{\lambda}}(h^{\diamond}) \}) \cr
& \geq G_n(\{ x \in \R^n : (x,t_0) \in C_{s_0}(h) \} )^{1-\lambda} G_n( \{ x \in \R^n : (x,t_1) \in C_{s_1}(h) \} )^{\lambda}.
\end{align}
Notice that this inequality is also true if $t_0$ or $t_1$ satisfy that one of the sets on the right-hand side is the empty set.

Moreover, the exponential function satisfies that, for any $t_0,t_1\in[0,\infty)$ and any $\lambda\in[0,1]$,
\[
e^{-t_{\lambda}} = e^{-((1-\lambda)t_0+ \lambda t_1)} = (e^{-t_0})^{1-\lambda} (e^{-t_1})^{\lambda}.
\]
Therefore, inequality $\eqref{eq:BM}$ implies that for any fixed $s_0,s_1 \in [0, \infty)$ and $\lambda \in [0,1]$ and any $t_0, t_1 \in [0, \infty)$
\begin{align*}
e^{-t_{\lambda}} & G_n(\{ x \in \R^n : (x, t_{\lambda}) \in C_{s_{\lambda}}(h^{\diamond}) \}) \\
& \geq (e^{-t_0} G_n(\{ x \in \R^n : (x,t_0) \in C_{s_0}(h) \} ) )^{1-\lambda} (e^{-t_1} G_n( \{ x \in \R^n : (x,t_1) \in C_{s_1}(h) \} ))^{\lambda}.
\end{align*}

Applying Pr\'ekopa-Leindler's inequality (see, for instance, \cite[Thm. 1.4.1]{AGM}) in $\R$ to the functions
\begin{align*}
f(t) &=e^{-t} G_n(\{ x \in \R^n : (x,t) \in C_{s_0}(h) \} ) \chi_{[0, \infty)}(t), \\
g(t)&= e^{-t} G_n(\{ x \in \R^n : (x,t) \in C_{s_1}(h) \} ) \chi_{[0, \infty)}(t), \textrm{ and} \\
h(t)&=e^{-t} G_n(\{ x \in \R^n : (x, t) \in C_{s_{\lambda}}(h^{\diamond}) \}) \chi_{[0, \infty)}(t),
\end{align*}
we obtain that for any fixed $s_0,s_1 \in [0, \infty)$ and $\lambda \in [0,1]$
\begin{align*}
\int_0^{\infty} e^{-t} G_n(\{ x \in \R^n : (x, t) \in C_{s_{\lambda}}(h^{\diamond}) \}) dt&\geq \left( \int_0^{\infty} e^{-t} G_n(\{ x \in \R^n : (x,t) \in C_{s_0}(h) \} ) dt \right)^{1-\lambda} \cr
&\times\left( \int_0^{\infty} e^{-t} G_n(\{ x \in \R^n : (x,t) \in C_{s_1}(h) \} ) dt \right)^{\lambda}.
\end{align*}
Equivalently,
\[
I_{h^{\diamond}} ((1-\lambda)s_0 + \lambda s_1) \geq I_h(s_0)^{1-\lambda} I_h(s_1)^{\lambda}.
\]
\end{proof}

Let us now prove Theorem \ref{thm:DiscreteFunctionalBerwald}

\begin{proof}[Proof of  Theorem \ref{thm:DiscreteFunctionalBerwald}]

By Lemma \ref{lem:LogConcFunctionalDiscreteBerwald}, we have that for any $s_0,s_1 \in [0, \infty)$ and $\lambda\in [0,1]$
$$
I_{h^{\diamond}} ((1-\lambda)s_0 + \lambda s_1) \geq I_h(s_0)^{1-\lambda} I_h(s_1)^{\lambda}.
$$
Equivalently, understanding $\log 0=-\infty$, for any $s_0,s_1 \in [0, \infty)$ and $\lambda \in [0,1]$,
$$
\log I_{h^{\diamond}} ((1-\lambda)s_0 + \lambda s_1) \geq (1-\lambda) \log I_h(s_0) + \lambda \log I_h(s_1).
$$
Thus, $\textrm{conv}\left\{\textrm{epi}(-\log(I_h)\right\}\subseteq\textrm{epi}(-\log(I_h^\diamond))$.

\medskip

Let $u:[0,\infty) \to (-\infty, \infty]$ be the convex function defined as
$$
u(s)=\begin{cases}\textrm{inf}\left\{r\in\R\,:\,(s,r)\in\textrm{conv}\left\{\textrm{epi}(-\log  I_h)\right\}\right\} &\textrm{ if } -\log  I_h(s)\neq\infty\\
\infty&\textrm{ if } -\log  I_h(s)=\infty.
\end{cases}
$$
and notice that
$$
\textrm{epi}(-\log(I_h))\subseteq\textrm{conv}\left\{\textrm{epi}(-\log(I_h)\right\}\subseteq\textrm{epi}(u)\subseteq\textrm{epi}(-\log(I_h^\diamond)).
$$
Then for any $s \geq 0$
$$
-\log I_{h^{\diamond}}(s) \leq u(s) \leq -\log  I_h(s).
$$

Equivalently, calling $g:[0,\infty)\to[0,\infty)$ the log-concave function $g(s)=e^{-u(s)}$, we have that for every $s\geq 0$
$$
I_h(s)\leq g(s)\leq I_{h^\diamond}(s).
$$

Notice that, since $I_h$ is non-increasing on $[0, \infty)$, then $-\log I_h$ is non-decreasing on $[0, \infty)$ and then $u(0)=-\log I_h(0)$. Thus, $g(0)=I_h(0)=\nu(L(f))>0$.

By \eqref{eq:InequalityInclusionBallsBodies}, the function $g$ satisfies that, for any $0<p\leq q$,
\begin{align*}
\left( \frac{1}{\Gamma(1+q) g(0)}  \int_0^{\infty}qs^{q-1}g(s) ds \right)^{1/q} \leq \left( \frac{1}{\Gamma(1+p) g(0) }  \int_0^{\infty} ps^{p-1}g(s) ds \right)^{1/p}.
\end{align*}

Since $g(0)=\nu(L(f))$ and $I_h(s) \leq g(s) \leq I_{h^{\diamond}}(s)$ for every $s\geq0$, we obtain that
\begin{align*}
\left( \frac{1}{\Gamma(1+q) \nu(L(f))}  \int_0^{\infty}qs^{q-1}I_h(s) ds \right)^{1/q} \leq \left( \frac{1}{\Gamma(1+p) \nu(L(f)) }  \int_0^{\infty} ps^{p-1} I_{h^{\diamond}}(s) ds \right)^{1/p}.
\end{align*}

Finally we only have to notice that, using Fubini's Theorem,
$$
\int_0^{\infty}qs^{q-1}I_h(s) ds = \int_0^{\infty} q s^{q-1} \nu(\{ (x,t) \in L(f) : h(x,t) \geq s  \}) ds = \int_{L(f)} h^q(x,t) d\nu(x,t)
$$
and, in the same way,
\begin{eqnarray*}
\int_0^{\infty}ps^{p-1}I_{h^{\diamond}}(s) ds &=& \int_0^{\infty} p s^{p-1} \nu(\{ (x,t) \in L(f)+C_n : h^{\diamond}(x,t) \geq s  \}) ds \cr
&=& \int_{L(f)+C_n} (h^{\diamond})^p (x,t) d\nu(x,t).
\end{eqnarray*}
\end{proof}

\subsection{From the discrete to the continuous Berwald's inequality}

In this subsection we are going to show that Theorem \ref{thm:DiscreteFunctionalBerwald} implies Theorem \ref{thm:FuncBerwald>-1} in the range of parameters $0<p\leq q$.

\begin{proof}[Proof of Theorem \ref{thm:DiscreteFunctionalBerwald} implies Theorem \ref{thm:FuncBerwald>-1}]

Let $f=\Vert f\Vert_\infty e^{-u}\in\mathcal{F}(\R^n)$ and $h:L(f)\to[0,\infty)$ a concave function. Notice that we can assume, without loss of generality, that $\Vert f\Vert_\infty=f(0)$. Since any concave function is continuous on the interior of its support and  every $f\in\mathcal{F}(\R^n)$ is continuous on the interior of its support, taking into account \eqref{eq:barnu(L)Integral} and that $\overline{\nu}(\partial L(f))=0$, we can also assume, without loss of generality, that $L(f)$ is closed and $h$ is continuous on $L(f)$. Otherwise, change $f$ to $f_1=\Vert f\Vert_\infty e^{-u_1}$, the log-concave function such that $\textrm{epi}(u_1)=\overline{\textrm{epi}(u)}$, and change $h$ to $h_1:L(f_1) \to [0,\infty)$, the concave function whose hypograph is $\overline{\textrm{hyp}(h)}$, which we will rename as $f$ and $h$.

Let us define, for any $\lambda>0$, the function $f_\lambda\in\mathcal{F}(\R^n)$ given by $f_\lambda(x)=f\left( \frac{x}{\lambda} \right)$. let us recall that $\norma{f_\lambda}_{\infty}=\norma{f}_{\infty}$ and, for any $t \geq 0$, $L_t(f_\lambda)=\lambda L_t(f)$.
Thus,
$$
L(f_\lambda)=\{ (x,t) : f_\lambda(x) \geq e^{-t} \norma{f_\lambda}_{\infty} \} =\{(\lambda x,t)\,:\,(x,t)\in L(f)\}.
$$
Let us also define the concave function $h_\lambda \colon L(f_\lambda) \to [0, \infty)$ given by $h_\lambda(x,t)=h\left(\frac{x}{\lambda}, t \right)$.

For any $0<p\leq q$, we apply Theorem \ref{thm:DiscreteFunctionalBerwald} to the function $h_\lambda$ and we obtain that
\begin{align*}
&\left( \frac{1}{\Gamma(1+q) \nu(L(f_\lambda))}  \int_{L(f_\lambda)} h_\lambda^q(x,t) d\nu(x,t) \right)^{1/q}  \cr
&\leq\left( \frac{1}{\Gamma(1+p) \nu(L(f_\lambda))}  \int_{L(f_\lambda)+C_n} (h_\lambda\star\chi_{C_n})^p(x,t) d\nu(x,t) \right)^{1/p}.
\end{align*}

First of all notice that by Lemma \ref{lem:LogConcaveIntegralDiscreteToContinuous}, \eqref{eq:nu(L)Integral} and \eqref{eq:barnu(L)Integral}
\begin{equation}\label{set_L_r}
\lim_{\lambda \to \infty} \frac{\nu(L(f_\lambda))}{\lambda^n} =\lim_{\lambda \to \infty}\frac{1}{\lambda^n}\int_{\R^n}\frac{f_\lambda(x)}{\Vert f\Vert_\infty}dG_n(x) = \int_{\R^n}\frac{f(x)}{\Vert f\Vert_\infty}dx=\overline{\nu}(L(f)).
\end{equation}

Let us now prove that
\begin{equation}\label{eq:equalityexponentNew}
\lim_{\lambda \to \infty} \frac{1}{\lambda^n} \int_{L(f_\lambda)}h_\lambda^q(x,t) d\nu(x,t)=\int_{L(f)} h^q(x,t)d\overline{\nu}(x,t).
\end{equation}
Equivalently, let us prove that
\begin{equation}\label{equality_exponent_q}
\lim_{\lambda \to \infty} \frac{1}{\lambda^n} \int_0^{\infty} e^{-t} \int_{\lambda L_t(f)} h\left(\frac{x}{\lambda},t \right)^q dG_n(x) dt = \int_0^\infty e^{-t}\int_{L_t(f)} e^{-t} h(x,t)^q dxdt.
\end{equation}

On the one hand, since for any $t\geq0$ the function $h(\cdot, t)$ is integrable on $L_t(f)$, by \eqref{Riemann_integrability} we have that for every $t\geq0$
$$
\lim_{\lambda \to \infty} \frac{e^{-t}}{\lambda^n}  \int_{\lambda L_t(f)} h\left(\frac{x}{\lambda},t \right)^q dG_n(x) =e^{-t} \int_{L_t(f)} h(x,t) dx.
$$

On the other hand,  for every $t\geq0$ and every $\lambda\geq\frac{1}{2}$, we have by \eqref{eq:LatticePointAndVolume} and \eqref{eq:InclusionSuperlevelSetsAsplundConcave}
\begin{align*}
\frac{e^{-t}}{\lambda^n}  \int_{\lambda L_t(f)} h\left(\frac{x}{\lambda},t \right)^q dG_n(x) &= \frac{e^{-t}}{\lambda^n} \int_0^{\infty} q s^{q-1} G_n\left(\left\{ x \in \lambda L_t(f) : h\left(\frac{x}{\lambda},t \right) \geq s \right\}\right) ds \\
&= \frac{e^{-t}}{\lambda^n} \int_0^{\infty} q s^{q-1} G_n\left( \lambda \left\{ x \in L_t(f) : h\left(x,t \right) \geq s \right\}\right) ds \\
& \leq  \frac{e^{-t}}{\lambda^n} \int_0^{\infty} q s^{q-1} \left| \lambda \left\{ x \in L_t(f) : h\left(x,t \right) \geq s \right\} + \frac{1}{2} B_{\infty}^n \right| ds\\
& \leq  \frac{e^{-t}}{\lambda^n} \int_0^{\infty} q s^{q-1} \left| \lambda \left\{ x \in L_t(f) : h\left(x,t \right) \geq s \right\} + \lambda B_{\infty}^n \right| ds \\
&= e^{-t} \int_0^{\infty} q s^{q-1} \left|  \left\{ x \in L_t(f) : h\left(x,t \right) \geq s \right\} + B_{\infty}^n \right| ds \\
& \leq e^{-t} \int_0^{\infty} q s^{q-1} \left|  \left\{ x \in L_t(f)+B_\infty^n : h\star\chi_{B_\infty^n\times\{0\}}(x,t) \geq s \right\}  \right| ds \\
&=e^{-t}\int_{L_t(f)+B_\infty^n}(h\star\chi_{B_\infty^n\times\{0\}})^q(x,t) dx,
\end{align*}
which is integrable on $t \in [0, \infty)$ since $h\star\chi_{B_\infty^n\times\{0\}}:L(f)+(B_\infty^n\times\{0\})\to[0,\infty)$ is concave on $L(f)+(B_\infty^n\times\{0\})$ and, therefore, has moments of all orders with respect to the measure $d\overline{\nu}$ (see Remark \ref{rmk:IntegrabilityConcaveFunctionsOnEpigraphs}).

As a consequence, by the dominated convergence theorem we obtain \eqref{equality_exponent_q}.

Let now denote, for any $\lambda>0$, $h_\lambda^{\diamond}: L(f_\lambda)+C_n\to[0,\infty)$, the function given by $h_\lambda^\diamond=h_\lambda\star\chi_{C_n}$. Let us fix $\varepsilon>0$ and notice that, for every $t\geq0$ and for every $\lambda >\frac{1}{\varepsilon}$, we have

\begin{align*}
\int_{\lambda L_t(f)+C_n} (h_\lambda^{\diamond})^p(x,t) dG_n(x) &= \sum_{x \in (\lambda L_t(f) + C_n) \cap \Z^n} (h_\lambda^{\diamond})^p (x,t) \\
&= \sum_{\frac{x}{\lambda} \in \left\{ L_t(f) + \frac{1}{\lambda} C_n \right\} \cap \left\{ \frac{1}{\lambda} \Z^n \right\}}\left( \sup_{u \in C_n\atop \frac{x+u}{\lambda}\in L_t(f)} h\left( \frac{x+u}{\lambda}, t \right) \right)^p \\
&=\sum_{y \in \left\{ L_t(f) + \frac{1}{\lambda} C_n \right\} \cap \left\{ \frac{1}{\lambda} \Z^n \right\}}\left( \sup_{v \in \frac{1}{\lambda} C_n\atop y+v \in L_t(f)} h\left( y+v, t \right) \right)^p \\
& \leq \sum_{y \in \left\{ L_t(f) +\epsilon C_n \right\} \cap \left\{ \frac{1}{\lambda} \Z^n \right\}} \left( \sup_{v \in \epsilon C_n\atop y+v\in L_t(f)} h\left( y+v, t \right) \right)^p \\
& \leq \sum_{y \in \left\{ L_t(f) +\epsilon C_n \right\} \cap \left\{ \frac{1}{r} \Z^n \right\}} (h^{\diamond_{\epsilon}})^p (y,t),
\end{align*}
where $h^{\diamond_{\epsilon}}:L(f)+\epsilon C_n \to [0,\infty)$ is given by $h^{\diamond_{\epsilon}}=h\star\chi_{\varepsilon C_n}$.

Since for any $t\geq0$ the function $h^{\diamond_{\epsilon}}(\cdot,t)$ is integrable on $L_t(f)+\epsilon C_n$, by \eqref{Riemann_integrability} we have that for any $t\geq0$
\[
\lim_{\lambda \to \infty} \frac{e^{-t}}{\lambda^n} \sum_{y \in \left\{ L_t(f) +\epsilon C_n \right\} \cap \left\{ \frac{1}{\lambda} \Z^n \right\}} (h^{\diamond_{\epsilon}})^p(y,t) = e^{-t} \int_{L_t(f) + \epsilon C_n} (h^{\diamond_{\epsilon}})^p (x,t) dx.
\]

Moreover, for every $t\geq0$ and every $\lambda\geq\max\left\{\frac{1}{2},\frac{1}{\epsilon}\right\}$, using \eqref{eq:LatticePointAndVolume} and \eqref{eq:InclusionSuperlevelSetsAsplundConcave}, we have
\begin{align*}
\frac{e^{-t}}{\lambda^n} \sum_{y \in \left\{ L_t(f) +\epsilon C_n \right\} \cap \left\{ \frac{1}{\lambda} \Z^n \right\}} (h^{\diamond_{\epsilon}})^p (y,t) &= \frac{e^{-t}}{\lambda^n} \int_0^{\infty} p s^{p-1} G_n \left( \lambda \left\{ x \in L_t(f)+\epsilon C_n : h^{\diamond_{\epsilon}} (x,t) \geq s \right\}  \right) ds \\
& \leq  \frac{e^{-t}}{\lambda^n} \int_0^{\infty} p s^{p-1}  \left| \lambda \left\{ x \in L_t(f)+\epsilon C_n : h^{\diamond_{\epsilon}} (x,t) \geq s \right\} + \frac{1}{2} B_{\infty}^n \right| ds \\
&\leq \frac{e^{-t}}{\lambda^n} \int_0^{\infty} p s^{p-1}  \left| \lambda \left\{ x \in L_t(f)+\epsilon C_n : h^{\diamond_{\epsilon}} (x,t) \geq s \right\} + \lambda B_{\infty}^n \right| ds \\
&= e^{-t} \int_0^{\infty} p s^{p-1}  \left|  \left\{ x \in L_t(f)+\epsilon C_n : h^{\diamond_{\epsilon}} (x,t) \geq s \right\} + B_{\infty}^n \right| ds \\
&\leq e^{-t} \int_0^{\infty} p s^{p-1}  \left|  \left\{ x \in L_t(f)+\epsilon C_n : h^{\diamond_{\epsilon}} \star \chi_{B_{\infty}^n\times\{0\}} (x,t) \geq s \right\}  \right| ds \\
&= e^{-t} \int_{L_t(f)+\epsilon C_n} (h^{\diamond_{\epsilon}} \star \chi_{B_{\infty}^n\times\{0\}})^p (x,t)dx,
\end{align*}
which is integrable on $t\in [0, \infty)$ since since $h^{\diamond_{\epsilon}} \star \chi_{B_{\infty}^n\times\{0\}}:L(f)+\epsilon C_n+(B_\infty^n\times\{0\})\to[0,\infty)$ is concave on $L(f)+\epsilon C_n+(B_\infty^n\times\{0\})$ and, therefore, has moments of all orders with respect to the measure $d\overline{\nu}$ (see Remark \ref{rmk:IntegrabilityConcaveFunctionsOnEpigraphs}).

Thus, by the dominated convergence theorem, we conclude that
\begin{equation} \label{h_diamond}
\lim_{\lambda \to \infty} \frac{1}{\lambda^n} \int_0^{\infty} e^{-t} \sum_{y \in \left\{ L_t(f) +\epsilon C_n \right\} \cap \left\{ \frac{1}{\lambda} \Z^n \right\}} (h^{\diamond_{\epsilon}})^p(y,t) dt = \int_0^{\infty} e^{-t} \int_{L_t(f) + \epsilon C_n} (h^{\diamond_{\epsilon}})^p (x,t) dx dt.
\end{equation}

Thus, for any $\epsilon>0$ and any $\lambda>\frac{1}{\epsilon}$  we have that
\begin{align*}
&\left( \frac{1}{\Gamma(1+q) \frac{1}{\lambda^n} \nu(L(f_r)) } \frac{1}{\lambda^n} \int_{L(f_r)}h^q(x,t) d\nu(x,t) \right)^{1/q} \\
&\leq \left( \frac{1}{\Gamma(1+p) \frac{1}{\lambda^n} \nu(L(f_r))} \frac{1}{\lambda^n} \int_0^{\infty} e^{-t} \left(\sum_{y \in \left\{ L_t(f) +\epsilon C_n \right\} \cap \left\{ \frac{1}{\lambda} \Z^n \right\}} (h^{\diamond_{\epsilon}})^p(y,t)\right) dt \right)^{1/p}.
\end{align*}

Taking limits when $\lambda \to \infty$ and applying \eqref{set_L_r}, \eqref{eq:equalityexponentNew} and \eqref{h_diamond} we get
\begin{align*}
&\left( \frac{1}{\Gamma(1+q) \overline{\nu}(L(f))} \int_{L(f)} h^q(x,t) d\overline{\nu}(x,t) \right)^{1/q} \\
&\leq \left( \frac{1}{\Gamma(1+p) \overline{\nu}(L(f))} \int_0^{\infty} e^{-t} \int_{L_t(f)+\epsilon C_n} (h^{\diamond_{\epsilon}})^p(x,t) dx dt \right)^{1/p}.
\end{align*}

Since $\epsilon>0$ is arbitrary, to conclude the proof it is enough to show that
\begin{equation}\label{eq:infimumBerwald}
\inf_{\epsilon>0} \int_0^{\infty} e^{-t} \int_{L_t(f)+\epsilon C_n} (h^{\diamond_{\epsilon}})^p(x,t) dxdt \leq \int_{L(f)}h^p(x,t)d\overline{\nu}(x,t).
\end{equation}

We observe that for any $\varepsilon>0$, by \eqref{eq:InverseInclusionSuperlevelSetsAsplundConcave} and the fact that $h$ is continuous on the closed set $L(f)$,
\begin{eqnarray*}
\{ x \in L_t(f) + \epsilon C_n : h^{\diamond_{\epsilon}}(x,t) \geq s \} &\subseteq& \bigcap_{\varepsilon_1>0}\overline{\{ x \in L_t(f)  : h(x,t) \geq s-\varepsilon_1 \}} +  \epsilon B_{\infty}^n\cr
&=&\bigcap_{\varepsilon_1>0}\{ x \in L_t(f)  : h(x,t) \geq s-\varepsilon_1 \} +  \epsilon B_{\infty}^n\cr
&=&\{ x \in L_t(f)  : h(x,t) \geq s \} +  \epsilon B_{\infty}^n,
\end{eqnarray*}
so taking volumes $|\{ x \in L_t(f) + \epsilon C_n : h^{\diamond_{\epsilon}}(x,t)   \geq s \}| \leq |\{ x \in L_t(f)  : h(x,t) \geq s \} +  \epsilon B_{\infty}^n|$.

Thus,
\begin{eqnarray*}
\int_0^{\infty} e^{-t} \int_{L_t(f)+\epsilon C_n} (h^{\diamond_{\epsilon}})^p (x,t) dxdt &=& \int_0^{\infty} e^{-t} \int_0^{\infty} ps^{p-1} |\{ x \in L_t(f) + \epsilon C_n : h^{\diamond_{\epsilon}}(x,t) \geq s \}| ds dt\cr
&\leq&\int_0^{\infty} e^{-t} \int_0^{\infty} ps^{p-1} |\{ x \in L_t(f)  : h(x,t) \geq s \} +\epsilon B_\infty^n| ds dt.
\end{eqnarray*}

Taking a decreasing sequence $(\epsilon_n)_{n=1}^\infty$ converging to $0$ and using the monotone convergence theorem, we obtain
\begin{eqnarray*}
\int_0^{\infty} e^{-t} \int_{L_t(f)+\epsilon C_n} (h^{\diamond_{\epsilon}})^p (x,t) dxdt&\leq&\int_0^{\infty} e^{-t} \int_0^{\infty} ps^{p-1} |\{ x \in L_t(f)  : h(x,t) \geq s \}| ds dt\cr
&=&\int_0^\infty e^{-t}\int_{L_t(f)}h^p(x,t)dxdt=\int_{L(f)}h^p(x,t)d\overline{\nu}(x,t),
\end{eqnarray*}
which proves \eqref{eq:infimumBerwald}.
\end{proof}


\section{Functional Zhang's inequality}\label{sec:FunctionalZhang}

The main purpose of this section is to prove Theorem \ref{thm:DiscreteFunctionalZhang}. We will show that, in the same way as in \cite{ALM}, where inequality \eqref{eq:InequalityDiscreteZhang} was obtained from the discrete version of Berwald's inequality (Theorem \ref{thm:BerwaldDiscreteNew}) in the range $0<p\leq q$, we can obtain Theorem \ref{thm:DiscreteFunctionalZhang} from the discrete version of the functional Berwald's inequality we proved in Theorem \ref{thm:DiscreteFunctionalBerwald}.

We will begin this section providing a new proof of inequality \eqref{eq:InclusionFunctionalZhangRadialFunctions}, obtaining it from the functional Berwald's inequality given by Theorem \ref{thm:FuncBerwald>-1} in the range $0<p\leq q$. We will later use the same approach in the discrete setting in order to prove Theorem \ref{thm:DiscreteFunctionalZhang}.

\subsection{New proof of the functional Zhang's inequality}\label{subsec:NewProof}

In this subsection, we will provide a new proof of inequality \eqref{eq:InclusionFunctionalZhangRadialFunctions}, which is equivalent to \eqref{eq:InclusionFunctionalZhang}, and therefore implies the functional version of Zhang's inequality given by \eqref{eq:FunctionalZhang}. The main difference between this new proof and the ones in \cite{ABG1} and \cite{ABG2}, is that we will only make use of Berwald's inequality, Theorem \ref{thm:FuncBerwald>-1},  in the range $0 < p \leq q$. Since a version of Berwald's inequality in this range of the parameters has been provided in the discrete setting in Theorem \ref{thm:DiscreteFunctionalBerwald}, we will be able to use the same approach in order to prove
Theorem \ref{thm:DiscreteFunctionalZhang}.

We will make use of the following lemma, which is a direct consequence of \cite[Lem. 3.3]{ABG2}. We will write its proof here for the sake of completeness.

\begin{lemma}\label{lem:LemmaCovariogramRadialFunction}
Let $f \in \mathcal{F}(\R^n)$ and let $g_f$ be the covariogram function of $f$. Then, for any $p>0$
$$
p\int_0^\infty r^{p-1}g_f(re_n)dr=\frac{1}{p+1}\int_{L(P_{e_n^\perp}f)}|L_t(f)\cap(y+\langle e_n)\rangle|_1^{p+1}d\overline{\nu}_{n}(y,t).
$$
\end{lemma}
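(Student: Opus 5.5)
We use the level-set representation of the covariogram, \eqref{eq:Covariogram}, and then Fubini in the direction $e_n$. By \eqref{eq:Covariogram},
$$
g_f(re_n)=\int_0^\infty e^{-t}\,|L_t(f)\cap(re_n+L_t(f))|\,dt .
$$
So the whole problem reduces to a one-dimensional computation inside each fibre $L_t(f)\cap(y+\langle e_n\rangle)$. Here $y$ ranges over $e_n^\perp$ and $t\geq0$.

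First, fix $t\geq0$. Writing the $n$-dimensional volume as an integral over $y\in e_n^\perp$ of one-dimensional fibres gives
$$
|L_t(f)\cap(re_n+L_t(f))|=\int_{e_n^\perp}\big|I_{y,t}\cap(I_{y,t}+r)\big|_1\,dy,
$$
where $I_{y,t}=L_t(f)\cap(y+\langle e_n\rangle)$. Since $L_t(f)$ is convex, $I_{y,t}$ is a segment; call its length $\ell(y,t)$. Then
$$
|I_{y,t}\cap(I_{y,t}+r)|_1=(\ell(y,t)-r)_+ .
$$
The fibre is nonempty exactly when $y\in P_{e_n^\perp}(L_t(f))=L_t(P_{e_n^\perp}f)$, by the identity $L_t(P_Hf)=P_H(L_t(f))$ from Section~\ref{sec:ProjectionsOfFunctions}. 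Consequently $(y,t)$ ranges over $L(P_{e_n^\perp}f)$.

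Second, we integrate in $r$. For $\ell\geq0$ and $p>0$, an elementary Beta-integral computation gives
$$
p\int_0^\ell r^{p-1}(\ell-r)\,dr=\ell^{p+1}-\frac{p}{p+1}\ell^{p+1}=\frac{\ell^{p+1}}{p+1}.
$$

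Third, we combine everything by Tonelli's theorem, which applies because all integrands are nonnegative:
$$
p\int_0^\infty r^{p-1}g_f(re_n)\,dr=\int_0^\infty e^{-t}\int_{L_t(P_{e_n^\perp}f)}\frac{\ell(y,t)^{p+1}}{p+1}\,dy\,dt=\frac{1}{p+1}\int_{L(P_{e_n^\perp}f)}|L_t(f)\cap(y+\langle e_n\rangle)|_1^{p+1}\,d\overline{\nu}_n(y,t),
$$
using $d\overline{\nu}_n=dm_{n-1}\otimes e^{-t}dt$.

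The only points needing care are measure-theoretic. We must justify the interchange of integrals, which is Tonelli again. We also need the joint measurability of $(y,t)\mapsto\ell(y,t)$. This follows because $L(f)$ is convex, so $\ell$ is a length function of a convex set, which is concave on its support by Brunn's principle.

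It is also worth noting that finiteness holds. The right-hand side is bounded by moments of the concave function $\ell$ on $L(P_{e_n^\perp}f)$, and these are finite by Remark~\ref{rmk:IntegrabilityConcaveFunctionsOnEpigraphs}. Alternatively, finiteness follows since $g_f\in\mathcal F(\R^n)$ has finite moments. I expect no real obstacle beyond this bookkeeping; the key step is the fibre identity $|I\cap(I+r)|_1=(\ell-r)_+$ for segments.
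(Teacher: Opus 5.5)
Your proposal is correct and follows essentially the same route as the paper: the level-set representation \eqref{eq:Covariogram}, Fubini along the fibres in direction $e_n$ giving $\max\{|L_t(f)\cap(y+\langle e_n\rangle)|_1-r,0\}$, interchange of the $r$- and $y$-integrals, and the elementary identity $p\int_0^\ell r^{p-1}(\ell-r)\,dr=\ell^{p+1}/(p+1)$. The only cosmetic difference is that the paper first restricts the $r$-integral to $[0,\rho_{L_t(f)-L_t(f)}(e_n)]$ before interchanging, which your positive-part formulation makes unnecessary.
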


\begin{proof}
Taking into account \eqref{eq:Covariogram} and the fact that for any convex body $K\subseteq\R^n$, its covariogram function is supported on the difference body $K-K$,
\begin{eqnarray*}
&&p\int_0^{\infty} r^{p-1} g_f(re_n) dr=\int_0^{\infty} p r^{p-1} \int_0^{\infty} e^{-t} |L_t(f) \cap (re_n+L_t(f)) | dt dr \cr
&=& \int_0^{\infty} e^{-t} \int_0^{\rho_{L_t(f)-L_t(f)}(e_n)} p r^{p-1}  |L_t(f) \cap (re_n+L_t(f)) | dr dt \cr
&=& \int_0^{\infty} e^{-t} \int_0^{\rho_{L_t(f)-L_t(f)}(e_n)} p r^{p-1}  \int_{P_{e_n^{\perp}}L_t(f)} \max \{|L_t(f) \cap (y+\langle e_n \rangle) |_1 - r , 0  \} dy dr dt \cr
&=&  \int_0^{\infty} e^{-t} \int_{P_{e_n^{\perp}}L_t(f)} \int_0^{ |L_t(f) \cap (y+\langle e_n \rangle) |_1  } p r^{p-1}  ( |L_t(f) \cap (y+\langle e_n \rangle) |_1 - r ) dr dy dt \cr
&=&\frac{1}{p+1}\int_0^{\infty} e^{-t} \int_{P_{e_n^{\perp}}L_t(f)}  |L_t(f) \cap (y+\langle e_n \rangle) |_1^{p+1} dy dt\cr
&=&\frac{1}{p+1}\int_{L(P_{e_n^\perp}f)}|L_t(f)\cap(y+\langle e_n)\rangle|_1^{p+1}d\overline{\nu}_{n}(y,t).
\end{eqnarray*}
\end{proof}

We proceed now with our new proof of \eqref{eq:InclusionFunctionalZhangRadialFunctions}, which follows the lines of the proof of \eqref{eq:InclusionZhangRadialFunctions2} provided in \cite{ALM}.

\begin{proof}[New proof of inequality \eqref{eq:InclusionFunctionalZhangRadialFunctions}]
Let $f\in\mathcal{F}(\R^n)$ and let $L(f)\subseteq\R^n\times[0,\infty)$ and $L_t(f)\subseteq\R^n$ for any $t\geq0$ be defined as in Section \ref{sec:LogconcaveFunctions}. Taking into account that $P_{e_n^\perp}(L(f))=L(P_{e_n^\perp}f)\subseteq e_n^\perp\times[0,\infty)$, we define the function $h:L(P_{e_n^\perp}f)\to[0,\infty)$ given by
$$
h(y,t)=|L(f) \cap ((y,t)+\langle e_n \rangle )|_1=|L_t(f) \cap (y+\langle e_n \rangle)|_1,
$$
for every $(y,t)\in\R^{n-1}\times[0,\infty)$ such that $y\in P_{e_n^\perp}(L_t(f))$. Since $L(f)$ is convex and $L_t(f)$ is a bounded convex set for any $t\geq0$, $h$ is well-defined and is a concave function. Therefore, we can apply the functional version of Berwald's inequality, Theorem \ref{thm:FuncBerwald>-1} on $\tilde{L}=L(P_{e_n^\perp}f)=P_{e_n^\perp}(L(f))$ with parameters $p=1<n+1=q$ and obtain
$$
	\left(\frac{1}{\Gamma(n+2)\overline{\nu}_n(\tilde{L})}\int_{\tilde{L}} h^{n+1}(y,t)d\overline{\nu}_n(y,t)\right)^{1/(n+1)}\leq\frac{1}{\Gamma(2)\overline{\nu}_n(\tilde{L})}\int_{\tilde{L}} h(y,t)d\overline{\nu}_n(y,t),
$$
where $d\overline{\nu}_n=dm_{n-1}\otimes e^{-t}dt$.

Equivalently,
$$
\frac{1}{n+1}\int_{\tilde{L}} h^{n+1}(y,t)d\overline{\nu}_n(y,t)\leq \frac{n!}{\overline{\nu}_n(\tilde{L})^n}\left(\int_{\tilde{L}} h(y,t)d\overline{\nu}_n(y,t)\right)^{n+1}.
$$

On the one hand, by Lemma \ref{lem:LemmaCovariogramRadialFunction}
\begin{eqnarray*}
\frac{1}{n+1}\int_{\tilde{L}} h^{n+1}(y,t)d\overline{\nu}_n(y,t)&=&n\int_0^\infty r^{n-1}g_f(re_n)dr\cr
&=&n\int_0^\infty r^{n-1}\int_{\R^n}\min\left\{\frac{f(z)}{\Vert f\Vert_\infty},\frac{f(z-re_n)}{\Vert f\Vert_\infty}\right\}dzdr.
\end{eqnarray*}

On the other hand, since $\tilde{L}=L(P_{e_n^\perp}f)$, by \eqref{eq:barnu(L)Integral} we have
$$
\overline{\nu}_n(\tilde{L})=\int_{e_n^\perp}\frac{P_{e_n^\perp}f (y)}{\Vert P_{e_n^\perp}f\Vert_\infty}dy=\int_{e_n^\perp}\frac{P_{e_n^\perp}f (y)}{\Vert f\Vert_\infty}dy.
$$

Finally, by the definition of $h$,  Fubini's theorem and by \eqref{eq:barnu(L)Integral}
\begin{eqnarray*}
\int_{\tilde{L}} h(y,t)d\overline{\nu_n}(y,t)&=&\int_{0}^\infty e^{-t}\int_{P_{e_n^\perp}L_t(f)}|L_t(f) \cap (y+\langle e_n \rangle )|_1dydt\cr
&=&\int_{0}^\infty e^{-t}|L_t(f)|dt=\overline{\nu}_{n+1}(L(f))=\int_{\R^n}\frac{f(x)}{\Vert f\Vert_\infty}dx.
\end{eqnarray*}
Therefore,
$$
n\int_0^\infty r^{n-1}\int_{\R^n}\min\{f(z),f(z-re_n)\}dzdr\leq n!\frac{\left(\int_{\R^n}f(x)dx\right)^{n+1}}{\left(\int_{e_n^\perp}P_{e_n^\perp}f(y)dy\right)^n}.
$$
\end{proof}



\subsection{Discrete functional Zhang's inequality}




In this subsection we are going to prove Theorem \ref{thm:DiscreteFunctionalZhang}. In order to do that, given $f\in\mathcal{F}(\R^n)$, we define $g_f^\mu:\R^n\to[0,\infty)$ as the following variant of the covariogram function of $f$:
$$
g_f^{\mu}(x):=\int_{\R^n}\min\left\{\frac{f(z)}{\Vert f\Vert_\infty},\frac{f(z-x)}{\Vert f\Vert_\infty}\right\}d\mu_n(z).
$$

The function $g_f^\mu$ is well defined since, for any $x$, the function given by $\tilde{f}(z)=\min\left\{\frac{f(z)}{\Vert f\Vert_\infty},\frac{f(z-x)}{\Vert f\Vert_\infty}\right\}$ satisfies that $\tilde{f}\in\mathcal{F}(\R^n)$ with integral $g_f(x)$. Therefore, by \eqref{eq:integralMuandm}, $\tilde{f}$ is integrable with respect to $d\mu_n$. Besides, notice that for every $x\in\R^n$ we have
\begin{equation}\label{eq:CovariogramMu}
g_f^{\mu}(x)=\int_0^\infty e^{-t}\mu_n\left(L_t(f)\cap(x+L_t(f))\right)dt=\sigma_{n+1}\left(L(f)\cap((x,0)+L(f)\right).
\end{equation}
Notice also that $g_f^{\mu}(x)$ satisfies that
$$
\Vert g_f^\mu\Vert_\infty=g_f^\mu(0)=\int_{\R^n}\frac{f(z)}{\Vert f\Vert_\infty}d\mu_n(z).
$$

We will make use of the following lemma, which is analogue to Lemma \ref{lem:LemmaCovariogramRadialFunction} and whose proof follows the same lines.

\begin{lemma}\label{lem:IdentityDiscrete}
Let $f\in\mathcal{F}(\R^n)$ and $p>0$. Then
$$
p \int_0^{\infty} r^{p-1}g_f^\mu(re_n)dr=\frac{1}{p+1}\int_{L(P_{e_n^\perp}f)}|L_t(f)\cap(y+\langle e_n\rangle)|_1^{n+1}d\nu_n(y,t).
$$
\end{lemma}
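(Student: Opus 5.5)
The plan is to repeat the computation in the proof of Lemma \ref{lem:LemmaCovariogramRadialFunction}, replacing the Lebesgue measure on $\R^n$ by $\mu_n=G_{n-1}\otimes m_1$. Since $\mu_n$ is still Lebesgue in the $e_n$ direction, the one-dimensional fibre computation is unchanged. I read the exponent on the right-hand side as $p+1$, not $n+1$: this is what the computation produces, and it is the case $p=n$ that will be used for Theorem \ref{thm:DiscreteFunctionalZhang}.

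\emph{Step 1 (layer-cake).} By \eqref{eq:CovariogramMu} and Tonelli's theorem (all integrands are non-negative),
$$
p\int_0^\infty r^{p-1}g_f^\mu(re_n)\,dr=\int_0^\infty e^{-t}\int_0^\infty pr^{p-1}\mu_n\bigl(L_t(f)\cap(re_n+L_t(f))\bigr)\,dr\,dt .
$$

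\emph{Step 2 (fibre decomposition).} Fix $t\geq 0$. By definition of $\mu_n$,
$$
\mu_n(A)=\sum_{y\in P_{e_n^\perp}(A)\cap\Z^{n-1}}|A\cap(y+\langle e_n\rangle)|_1 .
$$
For each $y\in P_{e_n^\perp}(L_t(f))$, the fibre $L_t(f)\cap(y+\langle e_n\rangle)$ is a bounded interval, say of length $\ell_t(y)=|L_t(f)\cap(y+\langle e_n\rangle)|_1$. Its intersection with its translate by $re_n$ is an interval of length $\max\{\ell_t(y)-r,0\}$; whether the endpoints are open or closed does not affect the length. Moreover, the $y$-fibres of $L_t(f)\cap(re_n+L_t(f))$ are exactly these intersections. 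Hence
$$
\mu_n\bigl(L_t(f)\cap(re_n+L_t(f))\bigr)=\int_{P_{e_n^\perp}L_t(f)}\max\{\ell_t(y)-r,0\}\,dG_{n-1}(y).
$$

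\emph{Step 3 (integrate in $r$).} Exchange the $r$-integral with the (countable) sum over lattice points, again by Tonelli, and use the elementary identity
$$
\int_0^{\ell}pr^{p-1}(\ell-r)\,dr=\ell^{p+1}-\frac{p}{p+1}\ell^{p+1}=\frac{\ell^{p+1}}{p+1}.
$$
This gives
$$
\frac{1}{p+1}\int_0^\infty e^{-t}\int_{P_{e_n^\perp}L_t(f)}\ell_t(y)^{p+1}\,dG_{n-1}(y)\,dt .
$$

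\emph{Step 4 (identify the measure).} Use $P_{e_n^\perp}(L_t(f))=L_t(P_{e_n^\perp}f)$, which holds for every $t$, together with $d\nu_n=dG_{n-1}\otimes e^{-t}dt$. Then the last expression is exactly $\frac{1}{p+1}\int_{L(P_{e_n^\perp}f)}|L_t(f)\cap(y+\langle e_n\rangle)|_1^{p+1}d\nu_n(y,t)$, as claimed.

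No real obstacle is expected, since every interchange is justified by Tonelli's theorem for non-negative functions. The points that need care are that the supremum defining $P_{e_n^\perp}f$ matches the projection of the level sets, which was noted in Section \ref{sec:ProjectionsOfFunctions}, and the correct exponent $p+1$.
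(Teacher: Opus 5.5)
Your proof is correct and is essentially the paper's own argument: the layer-cake formula \eqref{eq:CovariogramMu}, the fibrewise identity $\mu_n\bigl(L_t(f)\cap(re_n+L_t(f))\bigr)=\int_{P_{e_n^\perp}L_t(f)}\max\{\ell_t(y)-r,0\}\,dG_{n-1}(y)$, Tonelli, and $\int_0^\ell pr^{p-1}(\ell-r)\,dr=\ell^{p+1}/(p+1)$. You are also right that the exponent should be $p+1$: the paper's own computation ends with $|L_t(f)\cap(y+\langle e_n\rangle)|_1^{p+1}$, and the lemma is only applied with $p=n$ (moreover, Step 4 does not need $P_{e_n^\perp}(L_t(f))=L_t(P_{e_n^\perp}f)$ exactly, since the integrand vanishes at any $y$ whose fibre is empty).
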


\begin{proof}
Notice that, by \eqref{eq:CovariogramMu},
\begin{eqnarray*}
&&p\int_0^{\infty} r^{p-1} g_f^\mu(re_n) dr=\int_0^{\infty} p r^{p-1} \int_0^{\infty} e^{-t} \mu_n(L_t(f) \cap (re_n+L_t(f))) dt dr \cr
&=& \int_0^{\infty} e^{-t} \int_0^{\rho_{L_t(f)-L_t(f)}(e_n)} p r^{p-1}  \mu_n(L_t(f) \cap (re_n+L_t(f))) dr dt \cr
&=& \int_0^{\infty} e^{-t} \int_0^{\rho_{L_t(f)-L_t(f)}(e_n)} p r^{p-1}  \int_{P_{e_n^{\perp}}L_t(f)} \max \{|L_t(f) \cap (y+\langle e_n \rangle) |_1 - r , 0  \} dG_{n-1}(y) dr dt \cr
&=&  \int_0^{\infty} e^{-t} \int_{P_{e_n^{\perp}}L_t(f)} \int_0^{ |L_t(f) \cap (y+\langle e_n \rangle) |_1  } p r^{p-1}  ( |L_t(f) \cap (y+\langle e_n \rangle) |_1 - r ) dr dG_{n-1}(y) dt \cr
&=&\frac{1}{p+1}\int_0^{\infty} e^{-t} \int_{P_{e_n^{\perp}}L_t(f)}  |L_t(f) \cap (y+\langle e_n \rangle) |_1^{p+1} dG_{n-1}(y) dt\cr
&=&\frac{1}{p+1}\int_{L(P_{e_n^\perp}f)}|L_t(f)\cap(y+\langle e_n)\rangle|_1^{p+1}d\nu_{n}(y,t).
\end{eqnarray*}
\end{proof}

Let us now prove Theorem \ref{thm:DiscreteFunctionalZhang}. We follow the same lines as in the proof of \eqref{eq:InclusionFunctionalZhangRadialFunctions} provided in Section \ref{subsec:NewProof}.

\begin{proof}[Proof of Theorem \ref{thm:DiscreteFunctionalZhang}]
Let $f\in\mathcal{F}(\R^n)$ such that $\Vert f\Vert_\infty=f(0)$. Let $L(f)\subseteq\R^n\times[0,\infty)$ and $L_t(f)\subseteq\R^n$ for any $t\geq0$ be defined as in Section \ref{sec:LogconcaveFunctions}. Taking into account that $P_{e_n^\perp}(L(f))=L(P_{e_n^\perp}f)\subseteq e_n^\perp\times[0,\infty)$, we define the function $h:L(P_{e_n^\perp}f)\to[0,\infty)$ given by
$$
h(y,t)=\frac{1}{2}|L(f) \cap ((y,t)+\langle e_n \rangle )|_1=\frac{1}{2}|L_t(f) \cap (y+\langle e_n \rangle)|_1,
$$
for every $(y,t)\in\R^{n-1}\times[0,\infty)$ such that $y\in P_{e_n^\perp}(L_t(f))$. Since $L(f)$ is convex and $L_t(f)$ is a bounded convex set for any $t\geq0$, $h$ is well-defined and is a concave function. Moreover, the hypograph of $h$ is
\begin{eqnarray}\label{eq:HypographHOnProjection}
\textrm{hyp}(h)&=&\{(y,t,s)\in e_{n}^\perp\times[0,\infty)\times[0,\infty)\,:\, 0\leq s\leq h(y,t)\}\cr
&=&\{(y,t,s)\in e_{n}^\perp\times[0,\infty)\times[0,\infty)\,:\, (y,r)\in S_{e_n}(L_t(f)),\forall 0\leq r<s\}\cr
&=&\{(y,t,s)\in e_{n}^\perp\times[0,\infty)\times[0,\infty)\,:\,(y,r)\in L_t(S_{e_n}(f)),\forall 0\leq r<s\}\cr
&=&\{(y,t,s)\in e_{n}^\perp\times[0,\infty)\times[0,\infty)\,:\,(y,r,t)\in L(S_{e_n}(f)),\forall 0\leq r<s\}.
\end{eqnarray}

Therefore, we can apply the discrete functional version of Berwald's inequality, Theorem \ref{thm:DiscreteFunctionalBerwald} on $\tilde{L}=L(P_{e_n^\perp}f)=P_{e_n^\perp}(L(f))=P_{e_n^\perp}(L(S_{e_n}(f)))$ with parameters $p=1<n+1=q$, and obtain
$$
	\left(\frac{1}{\Gamma(n+2)\nu_n(\tilde{L})}\int_{\tilde{L}} h^{n+1}(y,t)d\nu_n(y,t)\right)^{1/(n+1)}\leq\frac{1}{\Gamma(2)\nu_n(\tilde{L})}\int_{\tilde{L}+C_{n-1}} (h\star\chi_{C_{n-1}})(y,t)d\nu_n(y,t).
$$

Equivalently,
$$
\frac{1}{n+1}\int_{\tilde{L}} h^{n+1}(y,t)d\nu_n(y,t)\leq \frac{n!}{\nu_n(\tilde{L})^n}\left(\int_{\tilde{L}+C_{n-1}} (h\star\chi_{C_{n-1}})(y,t)d\nu_n(y,t)\right)^{n+1}.
$$

On the one hand, by Lemma \ref{lem:IdentityDiscrete}
\begin{align*}
\frac{1}{n+1}\int_{\tilde{L}} h^{n+1}(y,t)d\nu_n(y,t)&=\frac{n}{2^{n+1}}\int_0^\infty r^{n-1}g_f^\mu(re_n)dr\cr
&=\frac{n}{2^{n+1}}\int_0^\infty r^{n-1}\int_{\R^n}\min\left\{\frac{f(z)}{\Vert f\Vert_\infty},\frac{f(z-re_n)}{\Vert f\Vert_\infty}\right\}d\mu_n(z)dr.
\end{align*}

On the other hand, since $\tilde{L}=L(P_{e_n^\perp}f)$, by \eqref{eq:nu(L)Integral} we have
$$
\nu_n(\tilde{L})=\int_{e_n^\perp}\frac{P_{e_n^\perp}f (y)}{\Vert P_{e_n^\perp}f\Vert_\infty}dG_{n-1}(y)=\int_{e_n^\perp}\frac{P_{e_n^\perp}f (y)}{\Vert f\Vert_\infty}dG_{n-1}(y).
$$

Finally, let us see that
\begin{equation}\label{eq:FromhToSymmetrization}
\int_{\tilde{L}+C_{n-1}} (h\star\chi_{C_{n-1}})(y,t)d\nu_n(y,t)\leq\frac{1}{2}\int_{\R^n}\frac{(S_{e_n}(f)\star\chi_{C_{n-1}})(x)}{\Vert f\Vert_\infty}d\mu_n(x).
\end{equation}

Taking into account \eqref{eq:HypographHOnProjection}, we have that for any $(y,t)\in \tilde{L}+C_{n-1}$ and any $s\geq0$
\begin{eqnarray*}
&&(h\star\chi_{C_{n-1}})(y,t)\geq s\Leftrightarrow\sup_{z\in C_{n-1}\atop y+z\in P_{e_n^\perp}L_t(f)}h(y+z,t)\geq s\cr
&\Leftrightarrow&\forall 0\leq r<s,\exists z\in C_{n-1}\textrm{ such that }y+z\in P_{e_n^\perp}L_t(f) \textrm{ and }h(y+z,t)> r\cr
&\Leftrightarrow&\forall 0\leq r<s,\exists z\in C_{n-1}\textrm{ such that }(y+z,r,t)\in L(S_{e_n}(f))\cr
&\Leftrightarrow& \forall 0\leq r<s, \sup_{z\in C_{n-1}} (S_{e_n}(f))(y+z,r)\geq e^{-t}\Vert f\Vert_\infty\cr
&\Leftrightarrow& \forall 0\leq r<s, (y,r,t)\in L(S_{e_n}(f)\star\chi_{C_{n-1}}) \cr
&\Leftrightarrow& \forall 0\leq r<s, (y,r)\in L_t(S_{e_n}(f)\star\chi_{C_{n-1}}).
\end{eqnarray*}

Therefore, for any $(y,t)\in \tilde{L}+C_{n-1}$ we have that
\begin{eqnarray*}
(h\star\chi_{C_{n-1}})(y,t)\geq s\Leftrightarrow\frac{1}{2}|L_t(S_{e_n}(f)\star\chi_{C_{n-1}})\cap (y+\langle e_n\rangle)|_1\geq s,
\end{eqnarray*}
and then
$$
(h\star\chi_{C_{n-1}})(y,t)=\frac{1}{2}|L_t(S_{e_n}(f)\star\chi_{C_{n-1}})\cap (y+\langle e_n\rangle)|_1,\quad\forall (y,t)\in \tilde{L}+C_{n-1}.
$$
Consequently, taking into account that for any $t\geq 0$,  we have by \eqref{eq:InclusionSuperlevelSetsAsplund} that
\begin{eqnarray*}
L_t(P_{e_n^\perp}f)+C_{n-1}&=&P_{e_n^\perp}(L_t(f)+C_{n-1})=P_{e_n^\perp}(L_t(S_{e_n}(f))+C_{n-1})\cr
&\subseteq& P_{e_n^\perp}(L_t(S_{e_n}(f)\star\chi_{C_{n-1}})).
\end{eqnarray*}
Therefore, using also \eqref{eq:sigma(L)Integral},
\begin{eqnarray*}
&&\int_{\tilde{L}+C_{n-1}} (h\star\chi_{C_{n-1}})(y,t)d\nu_n(y,t)\cr
&=&\frac{1}{2}\int_{\tilde{L}+C_{n-1}} |L_t(S_{e_n}(f)\star\chi_{C_{n-1}})\cap (y+\langle e_n\rangle)|_1d\nu_n(y,t)\cr
&=&\frac{1}{2}\int_0^\infty e^{-t}\int_{L_t(P_{e_n^\perp}f)+C_{n-1}}|L_t(S_{e_n}(f)\star\chi_{C_{n-1}})\cap (y+\langle e_n\rangle)|_1 dG_{n-1}(y)dt\cr
&\leq&\frac{1}{2}\int_0^\infty e^{-t}\int_{P_{e_n^\perp}(L_t(S_{e_n}(f)\star\chi_{C_{n-1}}))}|L_t(S_{e_n}(f)\star\chi_{C_{n-1}})\cap (y+\langle e_n\rangle)|_1 dG_{n-1}(y)dt\cr
&=&\frac{1}{2}\int_0^\infty e^{-t}\mu_n(L_t(S_{e_n}(f)\star\chi_{C_n}))dt=\frac{1}{2}\sigma_{n+1}(L(S_{e_n}(f)\star\chi_{C_{n-1}}))\cr
&=&\frac{1}{2}\int_{\R^n}\frac{(S_{e_n}(f)\star\chi_{C_{n-1}})(x)}{\Vert S_{e_n}(f)\star\chi_{C_{n-1}}\Vert_\infty}d\mu_n(x)=\frac{1}{2}\int_{\R^n}\frac{(S_{e_n}(f)\star\chi_{C_{n-1}})(x)}{\Vert f\Vert_\infty}d\mu_n(x).
\end{eqnarray*}
Thus, we obtain \eqref{eq:FromhToSymmetrization} and this completes the proof.
\end{proof}

\subsection{From the discrete to the continuous functional Zhang's inequality}

In this subsection we are going to prove that that the discrete functional version of Zhang's inequality given by Theorem \ref{thm:DiscreteFunctionalZhang} implies the functional version, \eqref{eq:InclusionFunctionalZhangRadialFunctions}.

\begin{proof}[Proof of Theorem \ref{thm:DiscreteFunctionalZhang} implies \eqref{eq:InclusionFunctionalZhangRadialFunctions}]

Let $f=\Vert f\Vert_{\infty} e^{-u}\in\mathcal{F}(\R^n)$. We can assume, without loss of generality, that $\Vert f\Vert_\infty=f(0)$. Since every $f\in\mathcal{F}(\R^n)$ is continuous on the interior of its support, $\overline{\nu}(\partial L(f))=0$, and taking into account \eqref{eq:barnu(L)Integral}, we can also assume, without loss of generality, that $L(f)$ is closed and $f$ is continuous on its support. Otherwise, change $f$ to $f_1=\Vert f\Vert_\infty e^{-u_1}$, the log-concave function such that $\textrm{epi}(u_1)=\overline{\textrm{epi}(u)}$. We define, for any $\lambda>0$, the function $f_\lambda \colon \R^n \to [0, \infty)$ given by $f_\lambda(x)=f\left( \frac{x}{\lambda} \right)$. Notice that $f_\lambda$ is log-concave, $\norma{f_\lambda}_{\infty}=\norma{f}_{\infty}=f(0)=f_\lambda(0)$ and for any $t \geq 0$, $L_t(f_\lambda)=\{ x \in \R^n : f_\lambda(x) \geq e^{-t} \norma{f_\lambda}_{\infty} \}=\lambda L_t(f).$ Applying Theorem \ref{thm:DiscreteFunctionalZhang} to the function $f_\lambda$ we have that for any $\lambda>0$
\begin{equation}\label{eq:ZhangfLambda}
n\int_0^\infty r^{n-1}\int_{\R^n}\min\{f_\lambda(z),f_\lambda(z-re_n)\}d\mu(z)dr\leq n!\frac{\left(\int_{\R^n}(S_{e_n}(f_\lambda)\star\chi_{C_{n-1}})(x)d\mu(x)\right)^{n+1}}{\left(\int_{e_n^\perp}P_{e_n^\perp}f_\lambda(y)dG_{n-1}(y)\right)^n}.
\end{equation}

Taking into account that $L(f)$ is closed, for every $t\geq0$ and every $\lambda>0$ we have that $L_t(S_{e_n} (f_\lambda)=S_{e_n}(L_t(f_\lambda))$. Using \eqref{eq:InclusionSuperlevelSetsAsplund} and \eqref{eq:InverseInclusionSuperLevelSetsAsplund}, as a consequence of \eqref{eq:SteinerSymmetrizationInclusion}, we have
\begin{eqnarray*}
\int_{\R^n}\frac{(S_{e_n}(f_\lambda)\star\chi_{C_{n-1}})(x)}{\Vert f\Vert_\infty}d\mu(x)&=&\int_0^{\infty} e^{-t} \mu(L_t(S_{e_n} (f_\lambda)\star\chi_{C_{n-1}})) dt \cr
&\leq&\int_0^{\infty} e^{-t} \mu(S_{e_n}(L_t(f_\lambda))+\overline{C_{n-1}}) dt \cr
&\leq& \int_0^{\infty} e^{-t} \mu(\lambda L_t(f) + \overline{C_{n-1}}) dt\cr
&=&\int_{\R^n}\frac{(f_\lambda\star\chi_{\overline{C_{n-1}}})(x)}{\Vert f\Vert_\infty}d\mu(x).
\end{eqnarray*}

Thus, dividing both sides of inequality \eqref{eq:ZhangfLambda} by $\lambda^{2n}=\frac{\lambda^{n(n+1)}}{\lambda^{n(n-1)}}$, we have
\begin{equation}\label{eq:first_application_zhang}
\frac{n}{\lambda^{2n}}\int_0^\infty r^{n-1}\int_{\R^n}\min\{f_\lambda(z),f_\lambda(z-re_n)\}d\mu(z)dr\leq n!\frac{\left(\frac{1}{\lambda^n}\int_{\R^n}(f_\lambda\star\chi_{\overline{C_{n-1}}})(x)d\mu(x)\right)^{n+1}}{\left(\frac{1}{\lambda^{n-1}}\int_{e_n^\perp}P_{e_n^\perp}f_\lambda(y)dG_{n-1}(y)\right)^n}.
\end{equation}

On the one hand, notice that for any $\lambda>0$ we have $P_{e_n^\perp}f_\lambda=(P_{e_n^\perp}f)_\lambda$. Therefore, by Lemma \ref{lem:LogConcaveIntegralDiscreteToContinuous} applied with $M=\{0\}$ we have
\begin{equation}\label{eq:limitRightHandSideDenominator}
\lim_{\lambda\to\infty}\frac{1}{\lambda^{n-1}}\int_{e_n^\perp}P_{e_n^\perp}f_\lambda(y)dG_{n-1}(y)=\int_{e_n^\perp}P_{e_n^\perp}f(y)dy.
\end{equation}

On the other hand, notice that, by Lemma \ref{lem:LogConcaveIntegralMuToContinuous} applied with $M=\overline{C_{n-1}}$, we have
\begin{equation}\label{eq:limitRightHandSideNumerator}
\lim_{\lambda\to\infty}\frac{1}{\lambda^n}\int_{\R^n}(f_\lambda\star\chi_{\overline{C_{n-1}}})(x)d\mu(x)=\int_{\R^n}f(x)dx.
\end{equation}

Let us prove that
\begin{eqnarray}\label{eq:LimitLeftHandSide}
&&\lim_{\lambda\to\infty}\frac{1}{\lambda^{2n}}\int_0^\infty r^{n-1}\int_{\R^n}\min\{f_\lambda(z),f_\lambda(z-re_n)\}d\mu(z)dr\cr
&=&\int_0^\infty r^{n-1}\int_{\R^n}\min\{f(z),f(z-re_n)\}dzdr.
\end{eqnarray}
By \eqref{eq:integralMuandm}, it is enough to prove that
\begin{eqnarray}\label{eq:LimitLeftHandSideFirstTerm}
&&\lim_{\lambda\to\infty}\frac{1}{\lambda^{2n}}\int_0^\infty r^{n-1}\int_{\R^n}\min\left\{\frac{f_\lambda(z)}{\Vert f\Vert_\infty},\frac{f_\lambda(z-re_n)}{\Vert f\Vert_\infty}\right\}dG_n(z)dr\cr
&=&\int_0^\infty r^{n-1}\int_{\R^n}\min\left\{\frac{f(z)}{\Vert f\Vert_\infty},\frac{f(z-re_n)}{\Vert f\Vert_\infty}\right\}dzdr
\end{eqnarray}
and
\begin{equation}\label{eq:LimitLeftHandSideSecondTerm}
\lim_{\lambda\to\infty}\frac{1}{\lambda^{2n}}\int_0^\infty r^{n-1}\int_{e_n^\perp}P_{e_n^\perp}\min\left\{\frac{f_\lambda(\cdot)}{\Vert f\Vert_\infty},\frac{f_\lambda(\cdot-re_n)}{\Vert f\Vert_\infty}\right\}(y)dG_{n-1}(y)dr=0.
\end{equation}

Notice that for every $\lambda>0$ we have
\begin{eqnarray*}
&&\frac{1}{\lambda^{2n}}\int_0^\infty r^{n-1}\int_{\R^n}\min\left\{\frac{f_\lambda(z)}{\Vert f\Vert_\infty},\frac{f_\lambda(z-re_n)}{\Vert f\Vert_\infty}\right\}dG_n(z)dr\cr
&=&\frac{1}{\lambda^{2n}}\int_0^\infty r^{n-1}\int_0^\infty e^{-t} G_n\left(\lambda L_t(f)\cap(re_n+\lambda L_t(f))\right)dtdr\cr
&=&\frac{1}{\lambda^{2n}}\int_0^\infty \int_0^{\rho_{\lambda(L_t(f)-L_t(f))}(e_n)}e^{-t}r^{n-1}G_n\left(\lambda\left(L_t(f)\cap\left(\frac{r}{\lambda}e_n+L_t(f)\right)\right)\right)drdt\cr
&=&\int_0^\infty \int_0^{\rho_{L_t(f)-L_t(f)}(e_n)}e^{-t}s^{n-1}\frac{G_n\left(\lambda\left(L_t(f)\cap(se_n+L_t(f))\right)\right)}{\lambda^n}dsdt.\cr
\end{eqnarray*}

On the one hand, by \eqref{discrete_continuous_volume}, for every $t\geq0$ and every $s\in[0,\rho_{L_t(f)-L_t(f)}(e_n)]$, we have that
$$
\lim_{\lambda\to\infty}e^{-t}s^{n-1}\frac{G_n\left(\lambda\left(L_t(f)\cap(se_n+L_t(f))\right)\right)}{\lambda^n}=e^{-t}s^{n-1}|L_t(f)\cap (se_n+L_t(f))|.
$$

On the other hand, for every $t\geq0$, every $s\in[0,\rho_{L_t(f)-L_t(f)}(e_n)]$ and every $\lambda>\frac{1}{2}$, by \eqref{eq:LatticePointAndVolume} we have
\begin{eqnarray*}
e^{-t}s^{n-1}\frac{G_n\left(\lambda\left(L_t(f)\cap(se_n+L_t(f))\right)\right)}{\lambda^n}&\leq&e^{-t}s^{n-1}\frac{\left|\lambda\left(L_t(f)\cap(se_n+L_t(f))\right)+\frac{1}{2}B_\infty^n\right|}{\lambda^n}\cr
&\leq&e^{-t}s^{n-1}\frac{\left|\lambda\left(L_t(f)\cap(se_n+L_t(f))\right)+\lambda B_\infty^n\right|}{\lambda^n}\cr
&=&e^{-t}s^{n-1}\left|\left(L_t(f)\cap(se_n+L_t(f))\right)+B_\infty^n\right|\cr
&\leq&e^{-t}s^{n-1}\left|(L_t(f)+B_\infty^n)\cap(se_n+L_t(f)+B_\infty^n)\right|\cr
&=&e^{-t}s^{n-1}\left|L_t(f\star\chi_{B_\infty^n})\cap(se_n+L_t(f\star\chi_{B_\infty^n}))\right|\cr
&\leq&e^{-t}s^{n-1}\left|L_t(f\star\chi_{B_\infty^n})\right|.
\end{eqnarray*}
Let us see that this upper bound is integrable on $\{(t,s)\in[0,\infty)^2\,:\,0\leq s\leq \rho_{L_t(f)-L_t(f)}(e_n)\}$. Since $\frac{f\star\chi_{B_\infty^n}}{\Vert f\Vert_\infty}\in\mathcal{F}(\R^n)$, there exist $A,B>0$ such that $f\star\chi_{B_\infty^n} (x)\leq \Vert f\Vert_\infty A e^{-B\Vert x\Vert_2}$ for every $x\in\R^n$. Therefore, for every $t\geq0$,
\begin{equation}\label{eq:inclusionInBall}
L_t(f\star\chi_{B_\infty^n})\subseteq\frac{t+\log A}{B} B_2^n.
\end{equation}
Thus, we have that
$$
\left|L_t(f\star\chi_{B_\infty^n})\right|\leq\frac{(t+\log A)^{n}}{B^{n}}|B_2^{n}|
$$
and
\begin{equation}\label{eq:InclusionDifferenceBodyInBall}
L_t(f)-L_t(f)\subseteq \frac{2(t+\log A)}{B} B_2^n.
\end{equation}
Therefore,
\begin{eqnarray*}
&&\int_0^\infty \int_0^{\rho_{L_t(f)-L_t(f)}(e_n)}e^{-t}s^{n-1}\left|L_t(f\star\chi_{B_\infty^n})\right|dsdt\cr
&\leq&\int_0^\infty\int_0^{\frac{2(t+\log A)}{B}}e^{-t}s^{n-1}\frac{(t+\log A)^{n}}{B^{n}}|B_2^{n}|dsdt\cr
&=&\frac{2^n|B_2^n|}{nB^{2n}}\int_0^\infty e^{-t}(t+\log A)^{2n}dt,
\end{eqnarray*}
which is finite. This proves \eqref{eq:LimitLeftHandSideFirstTerm}. Let us now prove \eqref{eq:LimitLeftHandSideSecondTerm}. Notice that for every $\lambda>0$ we have
\begin{eqnarray*}
&&\frac{1}{\lambda^{2n-2}}\int_0^\infty r^{n-1}\int_{e_n^\perp}P_{e_n^\perp}\min\left\{\frac{f_\lambda(\cdot)}{\Vert f\Vert_\infty},\frac{f_\lambda(\cdot-re_n)}{\Vert f\Vert_\infty}\right\}(y)dG_{n-1}(y)dr\cr
&=&\frac{1}{\lambda^{2n-2}}\int_0^\infty r^{n-1}\int_0^\infty e^{-t} G_{n-1}\left(P_{e_n^\perp}\left(\lambda L_t(f)\cap\left(r e_n+\lambda L_t(f)\right)\right)\right)dtdr\cr
&=&\frac{1}{\lambda^{2n-2}}\int_0^\infty r^{n-1}\int_0^\infty e^{-t} G_{n-1}\left(\lambda P_{e_n^\perp}\left(L_t(f)\cap\left(\frac{r}{\lambda} e_n+ L_t(f)\right)\right)\right)dtdr\cr
&=&\int_0^\infty s^{n-1}\int_0^\infty e^{-t} \frac{G_{n-1}\left(\lambda P_{e_n^\perp}\left(L_t(f)\cap\left(se_n+ L_t(f)\right)\right)\right)}{\lambda^{n-1}}dtds\cr
&=&\int_0^\infty \int_0^{\rho_{L_t(f)-L_t(f)}(e_n)}e^{-t}s^{n-1} \frac{G_{n-1}\left(\lambda P_{e_n^\perp}\left(L_t(f)\cap\left(se_n+ L_t(f)\right)\right)\right)}{\lambda^{n-1}}dsdt.\cr
\end{eqnarray*}

On the one hand, by \eqref{discrete_continuous_volume}, we have that for every $t\geq0$ and every $s\in[0,\rho_{L_t(f)-L_t(f)}(e_n)]$,
$$
\lim_{\lambda\to\infty}e^{-t}s^{n-1}\frac{G_{n-1}\left(\lambda P_{e_n^\perp}\left(L_t(f)\cap\left(se_n+ L_t(f)\right)\right)\right)}{\lambda^{n-1}}=e^{-t}s^{n-1}|P_{e_n^\perp}(L_t(f)\cap (se_n+L_t(f)))|.
$$
On the other hand, for every $t\geq0$ and every $s\in[0,\rho_{L_t(f)-L_t(f)}(e_n)]$ and every $\lambda>\frac{1}{2}$, by \eqref{eq:LatticePointAndVolume} we have
\begin{eqnarray*}
e^{-t}s^{n-1}\frac{G_{n-1}\left(\lambda P_{e_n^\perp}\left(L_t(f)\cap\left(se_n+ L_t(f)\right)\right)\right)}{\lambda^{n-1}}&\leq&e^{-t}s^{n-1}\frac{\left|\lambda P_{e_n^\perp}\left(L_t(f)\cap\left(se_n+ L_t(f)\right)\right)+\frac{1}{2}P_{e_n^\perp}(B_\infty^n)\right|}{\lambda^{n-1}}\cr
&\leq&e^{-t}s^{n-1}\frac{\left|\lambda P_{e_n^\perp}\left(L_t(f)\cap\left(se_n+ L_t(f)\right)\right)+\lambda P_{e_n^\perp}(B_\infty^n)\right|}{\lambda^{n-1}}\cr
&=&e^{-t}s^{n-1}\left|P_{e_n^\perp}\left(L_t(f)\cap\left(se_n+ L_t(f)\right)\right)+P_{e_n^\perp}(B_\infty^n)\right|\cr
&=&e^{-t}s^{n-1}\left|P_{e_n^\perp}\left(L_t(f)\cap\left(se_n+ L_t(f)\right)+B_\infty^n\right)\right|\cr
&\leq&e^{-t}s^{n-1}\left|P_{e_n^\perp}\left((L_t(f)+B_\infty^n)\cap\left(se_n+ L_t(f)+B_\infty^n\right)\right)\right|\cr
&\leq&e^{-t}s^{n-1}\left|P_{e_n^\perp}\left(L_t(f\star\chi_{B_\infty^n})\cap\left(se_n+ L_t(f\star\chi_{B_\infty^n})\right)\right)\right|\cr
&\leq&e^{-t}s^{n-1}\left|P_{e_n^\perp}\left(L_t(f\star\chi_{B_\infty^n})\right)\right|.\cr
\end{eqnarray*}

By \eqref{eq:inclusionInBall}, we have that
$$
\left|P_{e_n^\perp}\left(L_t(f\star\chi_{B_\infty^n})\right)\right|\leq\frac{(t+\log A)^{n-1}}{B^{n-1}}|B_2^{n-1}|
$$
and, taking also into account \eqref{eq:InclusionDifferenceBodyInBall},
\begin{eqnarray*}
&&\int_0^\infty \int_0^{\rho_{L_t(f)-L_t(f)}(e_n)}e^{-t}s^{n-1}\left|P_{e_n^\perp}\left(L_t(f\star\chi_{B_\infty^n})\right)\right|dsdt\cr
&\leq&\int_0^\infty \int_0^{\frac{2(t+\log A)}{B}}e^{-t}s^{n-1}\frac{(t+\log A)^{n-1}}{B^{n-1}}|B_2^{n-1}|dsdt\cr
&=&\frac{|B_2^{n-1}|}{n B^{2n-1}}\int_0^\infty e^{-t}2^n(t+\log A)^{2n-1}dt,
\end{eqnarray*}
which is finite. By the dominated convergence theorem we obtain
\begin{eqnarray*}
&&\lim_{\lambda\to\infty}\frac{1}{\lambda^{2n-2}}\int_0^\infty r^{n-1}\int_{e_n^\perp}P_{e_n^\perp}\min\left\{\frac{f_\lambda(\cdot)}{\Vert f\Vert_\infty},\frac{f_\lambda(\cdot-re_n)}{\Vert f\Vert_\infty}\right\}(y)dG_{n-1}(y)dr\cr
&=&\lim_{\lambda\to\infty}\int_0^\infty \int_0^{\rho_{L_t(f)-L_t(f)}(e_n)}e^{-t}s^{n-1} \frac{G_{n-1}\left(\lambda P_{e_n^\perp}\left(L_t(f)\cap\left(se_n+ L_t(f)\right)\right)\right)}{\lambda^{n-1}}dtds\cr
&=&\int_0^\infty \int_0^{\rho_{L_t(f)-L_t(f)}(e_n)}e^{-t}s^{n-1}|P_{e_n^\perp}(L_t(f)\cap (se_n+L_t(f)))|dsdt,
\end{eqnarray*}
and then
$$
\lim_{\lambda\to\infty}\frac{1}{\lambda^{2n}}\int_0^\infty r^{n-1}\int_{e_n^\perp}P_{e_n^\perp}\min\left\{\frac{f_\lambda(\cdot)}{\Vert f\Vert_\infty},\frac{f_\lambda(\cdot-re_n)}{\Vert f\Vert_\infty}\right\}(y)dG_{n-1}(y)dr=0.
$$
This proves \eqref{eq:LimitLeftHandSideSecondTerm} and, consequently, \eqref{eq:LimitLeftHandSide}. Therefore, taking limit as $\lambda\to\infty$ in \eqref{eq:first_application_zhang} and using \eqref{eq:limitRightHandSideDenominator},\eqref{eq:limitRightHandSideNumerator} and \eqref{eq:LimitLeftHandSide} we obtain \eqref{eq:InclusionFunctionalZhangRadialFunctions} as a consequence of Theorem \ref{thm:DiscreteFunctionalZhang}.

\end{proof}

\section{Discrete functional Rogers-Shephard inequality}\label{sec:DiscreteFunctionalRogersShephard}

In this section we are going to prove Theorem \ref{thm:DiscreteFunctionalRogersShephard}. We will obtain it as a consequence of \eqref{eq:Rogers-Shephard discrete} and we will see that, in fact, it is equivalent to \eqref{eq:Rogers-Shephard discrete}. We will also see that Theorem \ref{thm:DiscreteFunctionalRogersShephard} implies \eqref{eq:FunctionalRogersSephard}.

\begin{proof}[Proof of Theorem \ref{thm:DiscreteFunctionalRogersShephard}]
Let $f\in\mathcal{F}(\R^n)$. First of all, notice that for any $x,z\in\R^n$ we have
$$
\min\left\{\frac{f(z)}{\Vert f\Vert_\infty},\frac{f(z-x)}{\Vert f\Vert_\infty}\right\}\geq\frac{f(z)f(z-x)}{\Vert f\Vert_\infty^2},
$$
since both $\frac{f(z)}{\Vert f\Vert_\infty},\frac{f(z-x)}{\Vert f\Vert_\infty}\in [0,1]$. Notice also that for any $t\geq0$, if $x\in L_t(f\star\tilde{f})$, we have that $\displaystyle{\sup_{z\in\R^n}f(z)f(z-x)\geq e^{-t}\Vert f\Vert_\infty^2}$ and then, there exists a sequence $(z_n)_{n=1}^{\infty}\subseteq\R^n$ such that
$$
\lim_{n\to\infty}\frac{f(z_n)f(z_n-x)}{\Vert f\Vert_\infty^2}\geq e^{-t}.
$$
Besides, fixing any $\varepsilon>0$ we can assume, that for every $n\in\N$
$$
\min\left\{\frac{f(z_n)}{\Vert f\Vert_\infty},\frac{f(z_n-x)}{\Vert f\Vert_\infty}\right\}\geq\frac{f(z_n)f(z_n-x)}{\Vert f\Vert_\infty^2}>e^{-(t+\varepsilon)}.
$$
Therefore, for any fixed $\varepsilon>0$ we can assume that, for every $n\in\N$, we have that $z_n\in L_{t+\varepsilon}(f)\cap(x+ L_{t+\varepsilon}(f))$ and then, for any $\varepsilon>0$ we have that $L_{t+\varepsilon}(f)\cap(x+ L_{t+\varepsilon}(f))\neq\emptyset$ and then $x\in L_{t+\varepsilon}(f)-L_{t+\varepsilon}(f)$ for every $\varepsilon>0$.



Thus, for every $\varepsilon>0$
\begin{equation}\label{eq:InclusionSuperlevelSetsRS}
L_t(f\star\tilde{f})\subseteq  L_{t+\varepsilon}(f)-L_{t+\varepsilon}(f).
\end{equation}

Therefore, by \eqref{eq:Rogers-Shephard discrete}, we have that for every $t\geq0$ and every $\varepsilon>0$
$$
G_n(L_t(f\star\tilde{f}))\leq G_n\left(L_{t+\varepsilon}(f)-L_{t+\varepsilon}(f)\right)\leq\binom{2n}{n}G_n\left(L_{t+\varepsilon}(f)+\frac{3}{4}C_n\right).
$$
By \eqref{eq:InclusionSuperlevelSetsAsplund} we have
$$
L_{t+\varepsilon}(f)+\frac{3}{4}C_n\subseteq L_{t+\varepsilon}\left(f\star \chi_{\frac{3}{4}C_n} \right).
$$
Therefore, for every $t\geq0$ and every $\varepsilon>0$,
$$
G_n(L_t(f\star\tilde{f}))\leq \binom{2n}{n}G_n\left(L_{t+\varepsilon}\left(f\star \chi_{\frac{3}{4}C_n} \right)\right).
$$
Since for every $t\geq 0$ this is true for every $\varepsilon>0$, we have that for any $t\geq 0$
$$
G_n(L_t(f\star\tilde{f}))\leq \binom{2n}{n}G_n\left(L_{t}\left(f\star \chi_{\frac{3}{4}C_n} \right)\right)
$$
and then
$$
\int_0^\infty e^{-t}G_n(L_t(f\star\tilde{f}))dt\leq\binom{2n}{n}\int_0^\infty e^{-t}G_n\left(L_t\left(f\star\chi_{\frac{3}{4}C_n} \right)\right)dt.
$$

On the one hand, by \eqref{eq:nu(L)Integral},
$$
\int_0^\infty e^{-t}G_n(L_t(f\star\tilde{f}))dt=\nu(L(f\star\tilde{f}))=\frac{1}{\Vert f\Vert_\infty^2}\int_{\R^n}f\star\tilde{f}(x)dG_n(x).
$$

On the other hand,  also by \eqref{eq:nu(L)Integral},
$$
\int_0^\infty e^{-t}G_n\left(L_t\left(f\star\chi_{\frac{3}{4}C_n}\right)\right)dt=\nu\left(L\left(f\star\chi_{\frac{3}{4}C_n}\right)\right)=\frac{1}{\Vert f\Vert_\infty}\int_{\R^n}f\star\chi_{\frac{3}{4}C_n}(x)dx.
$$
Thus, we obtain
$$
\int_{\R^n} f \star \tilde{f}(x) dG_n(x) \leq \binom{2n}{n} \norma{f}_{\infty} \int_{\R^n} f\star\chi_{\frac{3}{4}C_n}(x) dG_n(x).
$$
\end{proof}

Applying Theorem \ref{thm:DiscreteFunctionalRogersShephard} to the characteristic function of any bounded convex set with non-empty interior, we obtain \eqref{eq:Rogers-Shephard discrete} and, therefore, they are equivalent. We state it in the following corollary:

\begin{cor}
Let $K\subseteq\R^n$ be a bounded convex set with non-empty interor. Then
$$
G_n(K-K) \leq \binom{2n}{n} G_n \left( K+ \frac{3}{4}C_n \right).
$$
\end{cor}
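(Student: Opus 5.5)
The plan is to apply Theorem \ref{thm:DiscreteFunctionalRogersShephard} to $f=\chi_K$ and check that each of the three integrals reduces to a lattice point count. Here $K$ is a bounded convex set with non-empty interior, so $f\in\mathcal{F}(\R^n)$ and $\Vert f\Vert_\infty=1$.

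For the left-hand side, $\tilde f=\chi_{-K}$. Then
$$
f\star\tilde f(x)=\sup_{z}\chi_K(z)\chi_{-K}(x-z),
$$
and this equals $1$ exactly when there is $z\in K$ with $x-z\in -K$, that is, when $x\in K-K$. Hence $f\star\tilde f=\chi_{K-K}$, and
$$
\int_{\R^n}f\star\tilde f\,dG_n=G_n(K-K).
$$

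For the right-hand side, $f\star\chi_{\frac34 C_n}(x)=\sup_z\chi_K(z)\chi_{\frac34C_n}(x-z)$. This equals $1$ if and only if $x\in K+\frac34C_n$, and $0$ otherwise. The supremum is over finitely many values in $\{0,1\}$, so no closure issues arise. Therefore the integral against $dG_n$ is $G_n(K+\frac34 C_n)$. With $\Vert f\Vert_\infty=1$, the inequality of Theorem \ref{thm:DiscreteFunctionalRogersShephard} becomes exactly the claimed one.

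There is essentially no obstacle. The only point to check is that $\chi_K$ is indeed log-concave and integrable with positive integral. Convexity of $K$ gives log-concavity, boundedness gives integrability, and non-empty interior gives a positive integral.
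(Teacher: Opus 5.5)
Your proof is correct and is essentially the paper's own argument. Like the paper, you apply Theorem \ref{thm:DiscreteFunctionalRogersShephard} to $f=\chi_K$, use $\Vert f\Vert_\infty=1$, $f\star\tilde f=\chi_{K-K}$ and $f\star\chi_{\frac{3}{4}C_n}=\chi_{K+\frac{3}{4}C_n}$, and read off the lattice point counts.
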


\begin{proof}
Let us consider $f=\chi_K\in\mathcal{F}(\R^n)$. Then $\Vert f\Vert_\infty=1$, $\tilde{f}=\chi_{-K}$ and the functions $f\star\tilde{f}$ and $f\star\chi_{\frac{3}{4}C_n}$ are given by
$$
f\star\tilde{f}=\chi_{K-K}\hspace{1cm}\textrm{ and }\hspace{1cm}f\star\chi_{\frac{3}{4}C_n}=\chi_{K+\frac{3}{4}C_n}.
$$
Thus, Theorem \ref{thm:DiscreteFunctionalRogersShephard} gives
$$
G_n(K-K) \leq \binom{2n}{n} G_n \left(K+ \frac{3}{4}C_n \right).
$$
\end{proof}

Let us point out that, in the same way as Theorem \ref{thm:DiscreteFunctionalRogersShephard} is equivalent to \eqref{eq:Rogers-Shephard discrete}, the continuous functional version of Rogers-Shephard inequality, \eqref{eq:FunctionalRogersSephard}, is equivalent to the geometric Rogers-Shephard inequality \eqref{eq:RogersShephard} (see the proof in \cite[Thm. 2.2]{AGJV1}, which is obtained in the same way from the same inclusion relation \eqref{eq:InclusionSuperlevelSetsRS}, by taking volumes instead of the lattice point enumerator measure, applying \eqref{eq:RogersShephard}, and integrating in $t$). Therefore, since the discrete version of Rogers-Shephard inequality, \eqref{eq:Rogers-Shephard discrete}, implies the continuous version \eqref{eq:RogersShephard}, we have that Theorem \ref{thm:DiscreteFunctionalRogersShephard} implies \eqref{eq:FunctionalRogersSephard}.

Nevertheless, we are going to give a direct proof of such implication:


\begin{proof}[Proof of Theorem \ref{thm:DiscreteFunctionalRogersShephard} implies \eqref{eq:FunctionalRogersSephard}]

Let $f=\Vert f\Vert_\infty e^{-u}\in\mathcal{F}(\R^n)$. Since every $f\in\mathcal{F}(\R^n)$ is continuous on the interior of its support, $\overline{\nu}(\partial L(f))=0$, and taking into account \eqref{eq:barnu(L)Integral}, we can assume without loss of generality that $L(f)$ is closed and $f$ is continuous on its support. Otherwise, change $f$ to $f_1=\Vert f\Vert_\infty e^{-u_1}$, the log-concave function such that $\textrm{epi}(u_1)=\overline{\textrm{epi}(u)}$ which we will rename as $f$.

Let us take $\lambda>0$ and let $f_{\lambda}\in\mathcal{F}(\R^n)$ be the function given by $f_{\lambda}(x)=f\left( \frac{x}{\lambda} \right)$. Notice that we have that $\widetilde{(f_{\lambda})}=(\tilde{f})_{\lambda}$, which we will denote $\Tilde{f_{\lambda}}$. Applying Theorem \ref{thm:DiscreteFunctionalRogersShephard} and taking int account \eqref{eq:IdentityLambda}, we obtain
\begin{equation*}
\int_{\R^n} (f\star\tilde{f})_\lambda (x) dG_n(x)=\int_{\R^n} f_{\lambda} \star \Tilde{f_{\lambda}} (x) dG_n(x) \leq \binom{2n}{n} \norma{f}_{\infty} \int_{\R^n} f_{\lambda} \star \chi_{\frac{3}{4}C_n} (x) dG_n(x).
\end{equation*}

Dividing both terms of the inequality by $\lambda^n$, we have that for any $\lambda>0$
\begin{equation} \label{eq:Rogers_Shephard_f_r}
\frac{1}{\lambda^n} \int_{\R^n} (f\star\tilde{f})_\lambda (x) dG_n(x) \leq \binom{2n}{n} \norma{f}_{\infty} \frac{1}{\lambda^n}  \int_{\R^n} f_{\lambda} \star \chi_{\frac{3}{4}C_n} (x) dG_n(x).
\end{equation}

On the one hand, by Lemma \ref{lem:LogConcaveIntegralDiscreteToContinuous} applied with $M=\{0\}$, we have
$$
\lim_{\lambda \to \infty} \frac{1}{\lambda^n}\int_{\R^n} (f\star\tilde{f})_\lambda (x) dG_n(x)= \int_{\R^n} f \star \tilde{f}(x) dx.
$$
On the other hand, Lemma \ref{lem:LogConcaveIntegralDiscreteToContinuous} applied with $M=\frac{3}{4}C_n$ gives
$$
\lim_{n\to\infty}\frac{1}{\lambda^n}  \int_{\R^n} f_{\lambda} \star \chi_{\frac{3}{4}C_n} (x) dG_n(x)=\int_{\R^n}f(x)dx,
$$
and, taking limits as $\lambda\to\infty$ in \eqref{eq:Rogers_Shephard_f_r}, we obtain \eqref{eq:FunctionalRogersSephard}.

\end{proof}

\end{document}